\newcommand{\R}{\mathbb{R}}
\newcommand{\N}{\mathbb{N}}
\newcommand{\Z}{\mathbb{Z}}
\newcommand{\C}{\mathbb{C}}
\newcommand{\Ccal}{\mathcal{C}}
\newcommand{\Mcal}{\mathcal{M}}
\newcommand{\Hcal}{\mathcal{H}}
\newcommand{\Vcal}{\mathcal{V}}
\def\spheresdp/{A}
\def\spacesdp/{B}
\newtheorem{defin}{Definition}[section]
\newtheorem{proposition}[defin]{Proposition}
\newtheorem{theorem}[defin]{Theorem}
\newtheorem{lemma}[defin]{Lemma}
\newtheorem{conjecture}[defin]{Conjecture}
\theoremstyle{definition}
\begin{document}

\title[Moment methods in energy minimization]{Moment methods in energy minimization: \\New bounds for Riesz minimal energy problems}
\author{\lowercase{\href{http://www.daviddelaat.nl}}{David de Laat}}

\thanks{Centrum Wiskunde \& Informatica (CWI), The Netherlands. Funded by Vidi grant
  639.032.917 and TOP grant 617.001.351 from the Netherlands Organization for Scientific
  Research (NWO).\\\indent A preliminary version of this paper appeared in the author's PhD thesis \cite{Laat2016}}

\subjclass{52C17, 90C22} 

\keywords{Thomson problem, Riesz $s$-energy, $4$-point bounds, semidefinite programming, Lasserre hierarchy, harmonic analysis on spaces of subsets, invariant polynomials}

\date{15 August, 2019}

\begin{abstract}
We use moment techniques to construct a converging hierarchy of optimization problems to lower bound the ground state energy of interacting particle systems. We approximate (from below) the infinite dimensional optimization problems in this hierarchy by block diagonal semidefinite programs. For this we develop the necessary harmonic analysis for spaces consisting of subsets of another space, and we develop symmetric sum-of-squares techniques. We numerically compute the second step of our hierarchy for Riesz $s$-energy problems with five particles on the $2$-dimensional unit sphere, where the $s=1$ case is the Thomson problem. This yields new numerically sharp bounds (up to high precision) and suggests the second step of our hierarchy may be sharp throughout a phase transition and may be universally sharp for $5$-particles on the unit sphere. This is the first time a $4$-point bound has been computed for a problem in discrete geometry.
\end{abstract}

\maketitle

\section{Introduction}

\subsection{Background}\label{sec:background}

We consider the problem of finding the ground state energy $E$ of a system of $N$ interacting particles in a compact metric space $(V,d)$ with pair potential $F \colon \mathrm{diam}(V) \to \R$. The \emph{ground state energy} is given by the minimum of
\[
\sum_{1 \leq i < j \leq N} F(d(x_i, x_j))
\]
over all sets $\{x_1,\ldots,x_N\}$ of $N$ distinct points in $V$.  
An important example is the \emph{Thomson problem}, where we consider the unit sphere $V = S^2 \subseteq \R^3$ with the Euclidean metric $d(x,y) = \|x-y\|$ and Coulomb potential $F(c) = 1/c$. 

A simple proof that the configuration consisting of three equally spaced points on a great circle is optimal for the Thomson problem with $N=3$ was given in 1912 \cite{Foeppl1912}, but for $N > 3$ we seem to require more involved techniques. In 1992, Yudin \cite{Yudin1992} introduced a beautiful method, based on earlier work for spherical codes by Delsarte, Goethals, and Seidel \cite{DelsarteGoethalsSeidel1977}, which can be used to prove that the regular tetrahedron, octahedron, and icosahedron are optimal for the Thomson problems with $4$, $6$, and $12$ particles,  respectively \cites{Yudin1992,Andreev1996}. 

Yudin's bound (of degree $d$) for $N$ particles in $V=S^{n-1}$ with the Euclidean metric and pair potential $F$ is the following semi-infinite linear program in the nonnegative scalar variables $c_1,\ldots,c_d$: Maximize $(N^2-Nh(1))/2$ subject to the linear constraints 
\[
h(t) := 1 + \sum_{k=1}^d c_k C_k^{n/2-1}(t) \leq F(\sqrt{2-2t}) \quad \text{for} \quad t \in [-1,1).
\]
Here $C_k^{n/2-1}$ is the ultraspherical polynomial of degree $k$ in dimension $n$ normalized such that $\smash{C_k^{n/2-1}(1)}=1$.
These polynomials have the property that the matrix  
\[
\begin{pmatrix}C_k^{n/2-1}(x_i \cdot x_j)\end{pmatrix}_{i,j=1}^m
\]
is positive semidefinite for all $m\in\N$ and $x_1,\ldots,x_m \in S^{n-1}$. This property can be used to show that for each feasible function $h$ to the above semi-infinite linear program and each subset $C \subseteq S^{n-1}$ with $|C| = N$ we have
\[
N^2 = |C|^2 \leq \sum_{x,y \in C} h(x \cdot y) \leq N h(1) + \sum_{\substack{x,y \in C \\ x \neq y}} F(\sqrt{2-2 x\cdot y}),
\]
and since $\|x-y\|^2 = 2-2 x\cdot y$ for $x,y \in S^{n-1}$, this shows that for each degree $d$, Yudin's bound gives a lower bound on the ground state energy $E$.

For every $N \in \{2, 3, 4, 6, 12\}$ it has been shown that Yudin's bound is sharp for the Thomson problem, by which we mean that the optimal value of the bound meets the energy of an $N$-particle configuration. An optimal solution, consisting of the scalar variables $c_1,\ldots,c_d$ (for some $d$), then forms an optimality certificate, that is, an easy to check optimality proof, of the configuration.  
 
We can interpred Yudin's bound as the symmetrized version of the degree $d$ truncation of the dual of an infinite dimensional convex relaxation for the energy minimization problem. The goal of this paper is to find stronger convex relaxations and show how to symmetrize and compute duals of these relaxations.

Of particular interest are configurations that are optimal for a large class of pair potentials. We are for instance interested in configurations in $S^{n-1}$ that are optimal for each \emph{Riesz $s$-energy} potential $F(c) = \mathrm{sign}(s)/c^s$ with $s > 0$. In \cite{CohnKumar2007} Cohn and Kumar define a configuration in $S^{n-1}$ to be \emph{universally optimal} if it is optimal for all pair potentials $F$ of the form $F(c^2) = L(c)$, where $L$ is a completely monotonic function. Here a smooth function $L$ is said to be \emph{completely monotonic} if it is nonnegative, decreasing (that is, $F'$ is nonpositive), convex (that is, $F''$ nonnegative), and so on (the derivatives alternate in sign). Notice that the requirement that $L$ instead of $F$ is completely monotonic is a slightly stronger condition, but this is not important. It follows that a universally optimal configuration is optimal for each Riesz $s$-energy potential with $s > 0$.

In \cite{CohnKumar2007} Yudin's bound is used to show that each sharp configuration is universally optimal, and it is shown  that the $600$-cell, which is not a sharp configuration, is also universally optimal. Here a configuration $C$ on $S^{n-1}$ is said to be sharp if there are $m$ inner products between distinct points in $C$ and it is a $2m-1$-design; that is, for each polynomial in $n$ variables of degree at most $2m-1$ the average of the polynomial over $S^{n-1}$ equals the average over $C$.

In the derivation of Yudin's bound one considers conditions on pairs of particles, which is why this is called a \emph{$2$-point bound}. In \cite{CohnWoo2011} Cohn and Woo  derive $3$-point bounds for energy minimization based on earlier work by Bachoc and Vallentin \cite{BachocVallentin2008} for spherical codes and Schrijver \cite{Schrijver2005} for binary codes. They used this to prove universal optimality for the configuration consisting of the vertices of the rhombic dodecahedron in $\R\mathbb{P}^2$. In \cite{Musin2007} this is extended to $k$-point bounds, but here the sphere needs to be at least $k-1$ dimensional, which means this approach cannot be used to go beyond $3$-point bounds for energy minimization on the $2$-dimensional sphere $S^2$. Moreover, it is not clear how to use this to perform computations for $k > 3$.

\subsection{A converging hierarchy for energy minimization} 

In this paper we construct a hierarchy $E_1,E_2,\ldots,E_N$ of increasingly strong relaxations for energy minimization. Each $E_t$ is a convex minimization problem whose optimal value lower bounds the ground state energy $E$. To construct this hierarchy we use the moment methods developed in \cite{LaatVallentin2014} for packing problems in discrete geometry, which generalize techniques from the Lasserre hierarchy \cite{Lasserre2002a} in polynomial optimization to an infinite dimensional setting. 

In Section~\ref{sec:energy-lasserre} we show how the linear constraints in the optimization problems $E_t$ are derived. Here we derive enough constraints to ensure convergence of the hierarchy (in Section~\ref{sec:energy convergence to the optimal energy} we show $E_N = E$), but we keep the constraint set small enough to allow for a satisfying duality theory, which is crucial for performing computations. The $t$-th step $E_t$ in this hierarchy is a $\min\{2t,N\}$-point bound, and after symmetry reduction (see below), the first step $E_1$ becomes essentially the same as the relaxation whose dual gives Yudin's bound.

Each $E_t$ is an infinite dimensional minimization problems where we optimize over spaces of Radon measures. Naturally, this implies that in the dual maximization problem $E_t^*$ we optimize over spaces of continuous functions. We show how to approximate the dual problems $E_t^*$ by semidefinite programs that are block diagonalized into sufficiently small blocks so that it becomes possible to numerically compute the $4$-point bound $E_2$ for interesting problems. This leads to the best known lower bounds for these problems, and this demonstrates the computational applicability of the moment techniques developed in \cite{LaatVallentin2014}. 

In Section~\ref{sec:infinite dimensional optimization} we consider a class of infinite dimensional optimization problems that occur naturally when forming relaxations of problems in infinitely many binary variables. The relaxations $E_t$ fit into this framework, where each point in $V$ corresponds to a binary variable indicating whether this position is occupied by a particle. Similarly, the relaxations for geometric packing problems from  \cite{LaatVallentin2014} fit into this framework. To find good lower bounds we need to find good feasible solutions to the dual optimization problems, and for this we discuss duality and symmetry reduction for this class of problems. 

\subsection{Symmetry reduction}

We setup a computational framework, based around symmetry reduction, that will allow us to compute $E_2$. For this we use harmonic analysis, sum-of-squares characterizations, and semidefinite programming. These tools are also used for computing the $2$ and $3$-point bounds mentioned in Section~\ref{sec:background}, but for $t > 1$ our problems $E_t^*$ are of a rather different form.  

In the $2$ and $3$-point bounds for energy minimization on $V=S^2$, the dual variables are positive definite kernels $K \colon \smash{S^2} \times \smash{S^2} \to \R$, which are continuous functions for which the matrices $\smash{(K(x_i,x_j))_{i,j=1}^m}$ are positive semidefinite for all $m \in \N$ and $x_1,\ldots,x_m \in S^2$. For the $2$-point bounds these kernels are invariant under the orthogonal group $O(3)$ and for the $3$-point bounds under the stabilizer subgroup of a point in $S^2$. In our dual problems $E_t^*$, however, the variables are positive definite kernels $K \colon I_t \times I_t \to \R$, where $I_t$ is the set of independent sets of size at most $t$ in a certain graph $G$ with vertex set $V = S^2$. This graph $G$ inherits the symmetry of the problem, and we may assume $K$ to be invariant under this symmetry. This means that to compute $E_t^*$, we have to optimize over the cone of positive definite $O(3)$-invariant kernels $K \colon I_t \times I_t \to \R$.

In Appendix~\ref{sec:invariant posdef kernels} we show that for each compact metric space $V$ with a continuous action by a compact group $\Gamma$, the cone of positive definite $\Gamma$-invariant kernels is equal to the closure of the union of a sequence of certain simpler inner approximating cones $C_1, C_2,\ldots$. Each cone $C_d$ can be parametrized by a finite product of positive semidefinite matrix cones. By using harmonic analysis and symmetric tensor powers we show how such a sequence can be constructed explicitly, and we carry out this construction for the $V = S^2$ and $t = 2$ case (see Section~\ref{chap:energy:sec:explicit symmetry}). This gives an explicit formulation of a sequence of optimization problems $E_{2,d}^*$, each having finite dimensional variable space, whose optimal values converge to the optimal value of the $4$-point bound $E_2^* = E_2$ as $d \to \infty$.

For each fixed $d$, the problems $E_{t,d}^*$ have finite dimensional variable space, but still have infinitely many constraints. In Section~\ref{sec:reduction to sdps} we use invariant theory to write these problems as semidefinite programs with semialgebraic constraints. By a semialgebraic constraint we mean the requirement that a polynomial, whose coefficients depend linearly on the entries of the positive semidefinite matrix variable(s), is nonnegative on a basic closed semialgebraic set. We model these semialgebraic constraints as semidefinite constraints by using sum-of-squares characterizations from real algebraic geometry. In this way we obtain  a sequence of semidefinite programs $E_{t,d,\delta}^*$ with 
\[
E_{t,d,\delta}^* \leq E_{t,d}^* \leq E_t^* = E_t \leq E
\] 
and $E_{t,d,\delta}^* \to E_{t,d}^*$ as the sum-of-squares degree $\delta$ grows.

There exist efficient numerical algorithms for semidefinite programming (linear optimization over the intersection of an affine space with a product of positive semidefinite matrix cones), but for this it is important that the blocksizes are not too large. As described above, the problems $E_{t,d}^*$ are already block diagonalized through the use of harmonic analysis, which exploits the symmetry of the space $(V,d)$. In Section~\ref{sec:putinars theorem for invaraint} we use  additional symmetry, the interchangeability of the particles, to derive symmetries in the semialgebraic constraints. We then give a symmetrized version of Putinar's theorem, which allows us to exploit symmetries in semialgebraic constraints. This can lead to much smaller block sizes in the semidefinite programs, and, as we show by applying this to the problems $E_{2,d,\delta}^*$, this significantly reduces the computation time.

\subsection{Evidence for new sharp bounds} Although the $N$th relaxation $E_N$ is guaranteed to give the ground state energy $E$, the advantage of using a hierarchy is that $E_t$ can  be sharp for much smaller values of $t$. For example, Yudin's bound, which is essentially equal to the symmetry reduced version of $E_1^*$, is sharp for the Thomson problem for $N=2,3,4,6,12$. It would be very interesting if this pattern continues, and if $E_2$ would be sharp for several new values of $N$. In \cite{CohnWoo2011} the $3$-point bound is conjectured to be sharp for $N=8$, and since $E_2$ is a $4$-point it is also expected to be sharp for $N=8$. 

As a first step into investigating whether $E_2$ is sharp for new values of $N$ --- and to demonstrate that it is possible to compute the second step of our hierarchy --- we compute $E_{2,6,6}^*$ numerically (with high precision) for the $5$ particle case of the Thomson problem. The optimal value output of the high precision semidefinite programming solver consists of $28$ decimal digits, and we verify that these coincide with the first $28$ digits of the energy of the configuration given by the vertices of the triangular bipiramid. This is a strong indication that the bound is sharp. That is, the inequalities in 
\[
E_{2,6,6}^* \leq E_{2,6}^* \leq E_2^* = E_2 \leq E
\]
are all attained. This is the first time a $4$-point bound has been computed for a problem in discrete geometry. 

Since we use floating point computations, the solution given by the semidefinite programming solver is near feasible and near optimal, and hence does not immediately give an optimality proof. However, the $2$ and $3$ point bounds discussed above have been successfully used to obtain proofs by rounding the solver output. In \cite{CohnWoo2011}, for example, a $3$-point semidefinite programming bound was used to prove optimality of the rhombic dodecahedron. Here the floating point solver output was rounded to a solution (consisting of algebraic numbers) that lies on the optimal face, and a small computer program was used to verify (in exact arithmetic) that the solution is indeed feasible for the semidefinite program. There are no principal objections to using the same approach for the bounds $E_{2,d,\delta}^*$. The only difference is that in our case not only the solver uses floating point arithmetic, but also the solver input consists of high precision floating point numbers, since we use numerical linear algebra to generate the input for the semidefinite programming solver. One possible approach to generate the solver input exactly is to use Gr\"obner bases instead of numerical linear algebra. We  do not do in this paper, as the goal here is to formulate the new bounds $E_t$, show that and how the $4$-point bound $E_2$ can be computed numerically, and to identify new cases of sharp bounds. 

The case of $5$ particles on $S^2$ is particularly interesting because it is one of the simplest mathematical models admitting a phase transition, where by a \emph{phase transition} we mean that a slight change of the pair potential $F$ results in a discontinuous jump from one global optimum to another. Consider for instance the \emph{Riesz $s$-energy} potential $F(c) = \mathrm{sign}(s)/c^s$.
In \cite{MelnikKnopSmith1977} it is conjectured that the configuration consisting of the vertices of the triangular bipyramid is optimal for $0 < s \leq s^* := 15.048\dotso$, and the configuration consisting of the vertices of a square pyramid (where the latitude of the base depends on the value of $s$) is optimal for $s \geq s^*$. There is much recent progress on this problem \cites{Schwartz2016, Schwartz2016b}, and in the book \cite{Schwartz2016b} Schwartz proves this conjecture for all $s$ between $0$ and a number slightly bigger than $s^*$. 

The approach of Schwartz consists of three main steps. The first step is similar in some sense to the reduction in \cite{CohnWoo2011} of proving universal optimality of a configuration to the problem of proving optimality with respect to a finite list of potential functions. This starts with the result by Tumanov \cite{Tumanov2013} that if a configuration of $5$ points in $S^2$ is optimal for the pair potentials $G_k(c) = (4-c^2)^k$ with $k=2, 3, 5$, then it is optimal for all Riesz $s$-energy potentials with $s \in (-2,0) \cup (0, 2]$. Schwartz improves on this by giving a finite list of potentials for the larger interval $(-2,0) \cup (0, 13)$. The second main step consists in proving optimality for each of these finitely many pair potentials. This is done through derivative bounds and a very efficient discretization of the space of all $5$ point configurations, which depends on the specific geometry arising from considering $5$ points on $S^2$. In the third step it is shown that if a $5$ point configuration is not the triangular bipiramid and is optimal for a Riesz potential with $13 < s \leq s_0$, where $s_0$ is an explicit number slightly bigger than $s^*$, then it must lie in a certain two dimensional space of configurations. From this it is then derived that the configuration must be a square bipiramid.

Instead of searching through all configurations, our approach is to  find sharp solutions to dual optimization problems. As mentioned above, these dual solutions can be used to construct easy to verify optimality proofs (optimality certificates). Our method does not use problem specific geometric arguments, and instead of discretizing the space of configurations we use a truncated Fourier series in the dual. The computational framework that we setup in this paper can also be applied to energy problems with more than $5$ particles, and to packing problems, where searching through the feasible configurations would not be feasible. That $E_1$ and $E_2$ can be sharp for configurations with $N > 2$ and $N > 4$ particles, respectively, suggests a deeper connection between optimal geometric configurations and positive semidefinite constraints on these configurations. As explained below, this approach can also give more insight into why certain configurations are (universally) optimal and why proving optimality for some instances is harder than others. The above relates primarily to the second main step in Schwartz's approach. As mentioned below we conjecture that the bound $E_2^*$ is sharp for all Riesz $s$ potentials, which relates to the third main step in Schwartz's approach.

In addition to the $s=1$ case (the Thomson problem), we compute $E_2$ for Riesz $s$-energy potentials with $s = 2,\ldots,7$, where $\smash{E_{2,6,6}^*}$ is numerically sharp for $s = 1,\ldots,5$ and $\smash{E_{2,6,8}^*}$ is numerically sharp for $s = 6,7$. Here we again verify that the first $28$ digits of the solver output agree with the energy of the corresponding configuration. Since we need to increase the parameter $\delta$ as $s$ gets larger, we have not been able to compute other sharp bounds. 
Based on the above evidence we have the following conjecture:
\begin{conjecture}
\label{conjecture:riesz}
The bound $E_2$ is sharp for the minimal Riesz $s$-energy of $5$ particles on $S^2$ for $s = 1,\ldots,7$.
\end{conjecture}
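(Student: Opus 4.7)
The plan is to upgrade the high-precision numerical evidence displayed in the excerpt into a rigorous optimality certificate for $E_2$. Since
\[
E_{2,6,\delta}^* \leq E_{2,6}^* \leq E_2 \leq E,
\]
and since the energy of the conjectured minimizer (the triangular bipyramid for $s \leq s^*$) gives the upper bound $E$ on the right, it suffices for each $s \in \{1,\ldots,7\}$ to exhibit an \emph{exact} feasible point of $E_{2,6,\delta}^*$ whose objective equals that energy. This collapses the whole chain and settles the conjecture pointwise in $s$.

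First I would use complementary slackness to pin down the structure of the sought dual kernel. At an optimum, the dual polynomial must be tight on the orbit (under the stabilizer in $\ort(3)$ of the minimizing configuration $C$, together with $S_5$) of the independent-set subsets of $C$, and the corresponding face of each symmetrized positive semidefinite block is prescribed by the zero-eigenvectors of the numerical solution. Together these conditions linearize into a finite-dimensional system with rational coefficients that the exact solution must satisfy.

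Next I would apply the standard round-project-verify strategy used by Cohn--Kumar \cite{CohnKumar2007} and Cohn--Woo \cite{CohnWoo2011}: round the numerical SDP output to $\mathbb{Q}$, project orthogonally onto the affine subspace cut out by the linearized complementary slackness equations, and then verify positivity by exhibiting explicit rational sum-of-squares decompositions of every polynomial matrix appearing after the symmetrized Putinar theorem of Section~\ref{sec:putinars theorem for invaraint}. Positivity of each numerical PSD block can be checked by exact diagonalization, while the semialgebraic positivity certificates can be verified by rigorous interval arithmetic as a fallback when closed-form SOS decompositions are hard to spot.

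The main obstacle is twofold. First, the $S_5$ symmetry of the particles used in Section~\ref{sec:putinars theorem for invaraint} shrinks the blocks at the cost of enforcing delicate sign and vanishing patterns on the SOS certificates, so the low-rank structure of each block must be identified \emph{correctly} before rounding, or else the projection will destroy positivity. Second, for $s \in \{6,7\}$ the required degree $\delta = 8$ inflates the Gram matrices and the pair potential is more sharply peaked, so ill-conditioning will likely force a polynomial basis adapted to the geometry of $C$ (for instance, a basis of harmonics vanishing on the orbit of pair distances at the bipyramid). In principle the procedure succeeds uniformly in $s$ whenever the numerics are sharp, but whether a clean closed-form certificate exists -- one that also yields a human-readable Yudin-style optimality proof -- is the genuinely hard question behind the conjecture.
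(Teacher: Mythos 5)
The statement you are asked about is a \emph{conjecture}, not a theorem: the paper explicitly states that it rests on ``strong evidence'' from high-precision numerics (the solver's $28$ digits matching the triangular bipyramid energy for $s=1,\ldots,7$) and offers no proof. Your submission is likewise not a proof but a programme for one -- a round-project-verify strategy in the style of Cohn--Kumar and Cohn--Woo -- so at the level of mathematical status, you and the paper are on the same footing. The approach you outline is in fact the one the paper itself floats in Section~\ref{sec:enegy:computations} as future work, so it is a reasonable line of attack. You also correctly flag the complementary-slackness linearization, the need to identify the low-rank optimal face before rounding, and the degree escalation to $\delta=8$ at $s=6,7$.

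However, there is a concrete gap between your plan and what would be needed, and it is precisely the one the paper singles out. You propose to round the solver \emph{output} and then verify SOS identities exactly. But in this setting the SDP \emph{input} is itself only a high-precision floating-point approximation: the $O(n)$-invariant polynomials $q_0,\ldots,q_4$ expressing $A_tK$ in the inner-product coordinates $x_i\cdot x_j$ are produced by a regularized numerical least-squares solve of $A^{\sf T}A x = A^{\sf T}b$ (Section~\ref{sec:enegy:computations}), not by symbolic elimination. Consequently the polynomial identity one would need to certify -- equation \eqref{eq:putinar representation} and its $i=3$ analogue -- is only known approximately, and ``verify the identity exactly'' is not well posed until the $q_i$ are replaced by exact algebraic data. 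The paper's own remedy is to compute the $q_i$ via Gr\"obner bases rather than numerical linear algebra; your proposal never addresses this, and without it the projection-and-verification step cannot be carried out in exact arithmetic. So the conjecture remains a conjecture, and your submission should be read as a (sensible but incomplete) research plan, not a proof.
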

In \cite{CohnWoo2011}*{Conjecture 15} it is conjectured that if there exists a completely monotonic potential function (in the squared Euclidean distance) for which a $k$-point bound is sharp for $N$ particles on $S^{n-1}$, and if this function is not a polynomial, then this $k$-point bound is universally sharp for $N$ particles on $S^{n-1}$. Here by \emph{universally sharp} we mean that the bound gives the ground state energy for all completely monotonic pair potentials in the squared chordal distance. If this conjecture is true for our version $E_2$ of $4$-point bounds for energy minimization, then the validity of Conjecture~\ref{conjecture:riesz} implies that $E_2$ is sharp for all positive $s$, and is thus sharp throughout a phase transition. In fact, then the following conjecture holds:
\begin{conjecture}
The bound $E_2$ is universally sharp for $5$ particles on $S^2$.
\end{conjecture}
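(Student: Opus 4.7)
The author's introduction already reduces this conjecture to two ingredients: Conjecture~\ref{conjecture:riesz} (sharpness of $E_2$ for at least one Riesz $s$-energy), together with the adaptation to $E_2$ of Cohn--Woo Conjecture~15 (that sharpness of a $k$-point bound for one non-polynomial completely monotonic potential implies universal sharpness). My plan is to attack each ingredient separately.

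For the first ingredient, I would upgrade the numerical sharpness of $E_{2,6,6}^*$ at, say, $s=1$ to a rigorous proof. Concretely, I would round the computed dual solution to rationals (or to algebraic numbers living in a small extension of $\mathbb{Q}$ generated by the bipyramid coordinates), symbolically verify the sum-of-squares decompositions certifying the semialgebraic constraints, and check that the resulting objective equals the triangular-bipyramid energy exactly. The $28$-digit numerical match makes it plausible that both the optimum and an optimal certificate live in a simple algebraic extension, so exact reconstruction should be tractable, perhaps after slightly increasing the sum-of-squares degree $\delta$ to leave room for rounding.

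The second ingredient -- propagating sharpness from one potential to all completely monotonic potentials -- is the main obstacle and essentially amounts to proving the $E_2$ analogue of Cohn--Woo Conjecture~15. Following the Cohn--Kumar paradigm for $2$-point bounds, I would try to characterize the optimal dual kernel $K \colon I_2 \times I_2 \to \R$ as a Hermite-type interpolant determined by the $4$-point distance data of the optimal configuration, show that this interpolant extends analytically as the potential varies, and then, via Bernstein's representation $f(r) = \int_0^\infty e^{-tr}\,d\mu(t)$, reduce feasibility for an arbitrary completely monotonic $f$ to the exponential family. A genuinely new difficulty in the $5$-particle case is the phase transition at $s^* \approx 15.048$, where the optimal configuration jumps from the triangular bipyramid to the square pyramid: one cannot use a single analytic family of dual certificates on all of $s > 0$, and must instead construct two families meeting at $s^*$, together with a parameter-free combinatorial criterion telling us which pyramid is optimal for a given completely monotonic $f$. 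Since $s^*$ itself is only defined transcendentally, and since the domain $I_2 \times I_2$ is larger and less symmetric than $S^2 \times S^2$, I expect the bulk of the genuinely new work to lie in extending the Cohn--Kumar interpolation machinery to this setting and in packaging the phase transition as a discrete choice of dual certificate rather than a singularity of an analytic family.
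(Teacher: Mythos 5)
This statement is a conjecture, not a theorem, and the paper supplies no proof of it; what the paper offers is precisely the reduction you identify in your first paragraph: if Conjecture~\ref{conjecture:riesz} holds for at least one $s$ and the Cohn--Woo Conjecture~15 adapts to the $4$-point bound $E_2$, then $E_2$ is universally sharp for $5$ particles on $S^2$. Your summary of that conditional chain is accurate and matches the paper exactly.

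Everything after that in your proposal is a research program, not a proof, and you are appropriately candid about that. Two remarks on the plan. First, your strategy for upgrading the numerical certificate at $s=1$ (rounding to a small algebraic number field and verifying the sum-of-squares identities exactly, possibly after raising $\delta$) is in fact exactly the approach the author suggests at the end of Section~\ref{sec:enegy:computations}, including the observation that the solver input itself must first be made exact, for instance by replacing the numerical least-squares computation of $q_0,\ldots,q_4$ with a Gr\"obner-basis computation. Second, your discussion of the phase transition correctly flags the genuinely new obstruction, but it is worth being precise about what it obstructs: universal \emph{sharpness} of the bound does not presuppose a universally \emph{optimal} configuration, so the phase transition does not contradict the conjecture, it only prevents the Cohn--Kumar paradigm (where the dual certificate is an interpolant anchored at the distance data of a single fixed configuration) from being applied verbatim. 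Your suggestion of two analytic families of certificates switching at $s^*$ is a sensible response, though even this only addresses the Riesz family; to reach all completely monotonic potentials one would still need something like a Bernstein-representation argument that respects the switching locus, and no such technology currently exists for $k>2$. In short, the proposal correctly identifies the paper's (non-)proof and the open ingredients, but it does not close any of the gaps; the conjecture remains open.
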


Given a particle configuration on a sphere (or, more generally, on a $2$-point homogeneous space), there exists a beautiful sufficient criterion, based on geometric design theory, that says Yudin's bound is sharp for this configuration \cites{CohnKumar2007, Levenshtein1992}. No such criterion is known for $k$-point bounds with $k > 2$. Together with the newly found numerically sharp instances of $E_2$ and possibly other sharp instances of $E_2$ still to be discovered (see Section~\ref{sec:enegy:computations}), the new approach in this paper to formulating $k$-point bounds for energy minimization may help in formulating such a sufficient criterion. This may help in getting a better understanding of the geometric ideas behind optimality and universal optimality of particle configurations.

\section{A hierarchy of relaxations for energy minimizaton} 
\label{sec:derivation of the hierarchy via relaxations}

In this section we derive a hierarchy of relaxations for energy minimization. We model the space containing the particles by a compact metric space $(V, d)$, and assume the pair potential is given by a continuous function $F \colon (0, \mathrm{diam}(V)] \to \R$ with $F(s) \to \infty$ as $s \downarrow 0$. This natural assumption is made for convenience, as it avoids having to work with multisets. We denote the number of particles in the system by $N$. The \emph{ground state energy} is given by the minimum of
\[
\sum_{1 \leq i < j \leq N} F(d(x_i,x_j))
\]
over all sets $\{x_1,\ldots,x_N\}$ of $N$ distinct points in $V$. 

To compactify this problem, which will be convenient when we discuss duality, we introduce a graph that allows us to discard some configurations that are already known to be suboptimal. Let $B$ be an upper bound on the minimal energy. Such a number can be obtained by computing the energy of an arbitrary configuration of $N$ distinct points. Let $G$ be the graph with vertex set $V$, where distinct vertices $x$ and $y$ are adjacent whenever $F(d(x,y)) > B$. This ensures that the optimal $N$ point configurations are among the \emph{independent sets} of this graph, which are the subsets of the vertices for which no two vertices are adjacent. Alternatively we can use a result such as \cite{KuijlaarsSaffSun2007}, which states that for certain energy potentials the points of an optimal $N$-point configuration must be at least a certain distance $D$ apart form each other, and we can define the graph $G$ by letting $x$ and $y$ be adjacent whenever $d(x,y) < D$. Since this has the potential to exclude more configurations this approach might lead to stronger relaxations.

Let $I_t$ be the set of independent sets of cardinality at most $t$. Equip $V^t$ with the product topology and $\smash{V^t / q}$ with the quotient topology, where $q$ is the map that sends an ordered tuple $(x_1,\ldots,x_t)$ to the unordered set $\{x_1,\ldots,x_t\}$. The set $I_t \setminus \{\emptyset\}$ obtains a topology by viewing it as a subset of $\smash{V^t / q}$. Equip $I_t = I_t \setminus \{\emptyset\}$ with the disjoint union topology, so that $\emptyset$ is an isolated point in $I_t$. Let $I_{=t}$ be the set of independent sets of cardinality exactly $t$, and endow $I_{=t}$ with a topology as a subset of $I_t$. For each $t$, the topology on $I_t$ is the topology induced from the Hausdorff distance
\[
d_{\mathrm H}(J,J') = \max\big\{\max_{x \in J} \min_{y \in J'} d(x,y),\, \max_{y \in J'} \min_{x \in J} d(x,y)\big\}
\]
between sets $J,J' \in I_{=t}$. Intuitively, two points $J$ and $J'$ in $I_{=t}$ are close if $J'$ can be obtained from $J$ by a small perturbation of the points in $J$.

The graph $G$ as defined above is an example of a compact topological packing graph as defined in \cite{LaatVallentin2014}. A \emph{topological packing graph} is a graph whose vertex set is a Hausdorff topological space where each clique is contained in an open clique. Here a clique is a subset of the vertex set where each pair of distinct vertices is adjacent. Using the topological packing graph condition it can be shown that $I_t$ and $I_{=t}$ are compact metric spaces, and from the definition of $G$ it follows that $I_{=N}$ is nonempty.

Let $\Ccal(I_N)$ be the space continuous functions $I_N \to \R$ and define $f \in \Ccal(I_N)$ by
\[
f(S) = \begin{cases} 
F(d(x,y)) & \text{if } S = \{x,y\} \text{ with } x \neq y,\\
0 & \text{otherwise.}
\end{cases}
\]
The continuity of $f$ follows from the continuity of $F$ and the fact that $I_{=2}$ is both open and closed in $I_N$ (see \cite{LaatVallentin2014}). Let $\chi_S$ be the measure 
\begin{equation}
\label{eq:chi}
\chi_S = \sum_{R \subseteq S} \delta_R,
\end{equation}
where $\delta_R$ is the Dirac point measure at $R$. For $S = \{x_1,\ldots,x_N\}$ we have 
\[
\chi_S(f) = \sum_{R \subseteq S} f(R) = \sum_{1 \leq i<j\leq N} F(d(x_i,x_j)).
\]
This means that the ground state energy can be written as
\begin{equation}\label{eq:groundstate}
E = \min_{S \in I_{=N}} \chi_S(f),
\end{equation}
where the minimum is attained because the function  $S \mapsto \chi_S(f)$ is continuous and the space $I_N$ is compact.

To obtain energy lower bounds we construct a hierarchy $E_1, E_2, \ldots, E_N$ of relaxations of the above problem. These are minimization problems such that for each feasible solution of $E$ (a configuration of $N$ particles) we can immediately construct a feasible solution of $E_t$ having the same objective value. The problems $E_t$ have the important feature that we can give their dual optimization problems in an explicit form, which is crucial for performing computations, and that we can prove the duality gap to be zero. In Section~\ref{sec:energy convergence to the optimal energy} we show that the $N$th step $E_N$ in this hierarchy gives the ground state energy, and that the extreme points of the feasible set of $E_N$ are precisely the measures $\chi_S$ with $S \in I_{=N}$. That is, we show $E_N$ is a \emph{sharp} relaxation of $E$. The problems $E_t$ are convex optimization problems, and we say $E_N$ is a \emph{convexification} of $E$.

Denote the space of signed Radon measures on $I_{2t}$ by $\Mcal(I_{2t})$. Given an independent set $S \in I_{=N}$, let
\[
\lambda_S := \sum_{R \subseteq S : |R| \leq 2t} \delta_R.
\]

In the $t$-th step $E_t$ we will optimize over measures $\lambda \in \Mcal(I_{2t})$ that we require to satisfy three  properties that are satisfied by $\lambda_S$. The first of these properties is that $\lambda_S$ is a positive measure. The second property is that 
\[
\lambda_S(I_{=i}) = \sum_{R \subseteq S : |R| \leq 2t} \delta_R(I_{=i}) = |\{R \subseteq S : |R| = i\}| =  \binom{N}{i} \quad \text{for all} \quad 0 \leq i \leq 2t,
\]
where we use the convention $\binom{N}{i} = 0$ for $i > N$. 

The third property is more subtle. The measure $\lambda_S$ satisfies a moment condition. We use the moment techniques from \cite{LaatVallentin2014} to define what we mean by this. Define the operator
\[
A_t \colon \Ccal(I_t \times I_t)_\mathrm{sym} \to \Ccal(I_{2t}), \, A_tK(S) = \sum_{\substack{J,J' \in I_t \\ J \cup J' = S}} K(J,J'),
\]
where $\Ccal(I_t \times I_t)_\mathrm{sym}$ is the space of symmetric, continuous functions $I_t \times I_t \to \R$, which we call \emph{symmetric kernels}. A kernel $K$ is said to be \emph{positive definite} if the matrices $\smash{(K(J_i,J_j))_{i,j=1}^n}$ are positive semidefinite for all $n\in \N$ and $J_1,\ldots,J_n \in I_t$. The positive definite kernels form a convex cone which we denote by $\Ccal(I_t \times I_t\smash{)_{\succeq 0}}$. By the Riesz representation theorem, the topological duals of $\Ccal(I_t \times I_t\smash{)_\mathrm{sym}}$ and $\Ccal(I_{2t})$ can be identified with the spaces $\smash{\Mcal(I_t \times I_t)_\mathrm{sym}}$ and $\Mcal(I_{2t})$. Here $\Mcal(I_t \times I_t\smash{)_\mathrm{sym}}$ consists of the symmetric Radon measures, which are the measures $\mu$ that satisfy
\[
\mu(E \times F) = \mu(F \times E) \quad \text{for all Borel sets} \quad E, F \subseteq I_t.
\]
The dual cone of $\Ccal(I_t \times I_t)_{\succeq 0}$ is defined by
\[
\Mcal(I_t \times I_t)_{\succeq 0} = \Big\{ \mu \in \Mcal(I_t \times I_t)_{\mathrm{sym}} : \mu(K) \geq 0 \text{ for all } K \in \Ccal(I_t \times I_t)_{\succeq 0}\Big\},
\]
and we call the elements in this cone \emph{positive definite measures}.

We have the adjoint operator 
\[
A_t^* \colon \Mcal(I_{2t}) \to \Mcal(I_t \times I_t)_\mathrm{sym},
\]
which is defined by $A_t^*\lambda(K) = \lambda(A_t K)$ for all $\lambda \in \Mcal(I_{2t})$ and $K \in \Ccal(I_t \times I_t)_\mathrm{sym}$, and we use this dual operator and the cone of positive definite measures to define the moment condition on $\lambda$.
\begin{defin}
\label{energy:def:positive type}
A measure $\lambda \in \Mcal(I_{2t})$ is of positive type if 
\[
A_t^* \lambda \in \Mcal(I_t \times I_t)_{\succeq 0}.
\]
\end{defin}
\noindent
See \cite{LaatVallentin2014}*{Remark 1} for an explanation why we use the term positive type here. The measure $\lambda_S$ defined above is of positive type: For each $K \in \Ccal(I_t \times I_t)_{\succeq 0}$, we have
\[
A_t^*\lambda_S(K) = \sum_{R \subseteq S} \; \sum_{\substack{J,J' \in I_t \\ J \cup J' = R}} K(J,J') = \sum_{\substack{J,J' \in S \\ |J|,|J'| \leq t}} K(J,J') \geq 0.
\]

We define the $t$-th step in our hierarchy by optimizing over measures $\lambda \in \Mcal(I_{2t})$ satisfying the three properties  discussed above; that is, $\lambda$ is a positive measure, $\lambda(I_{=i}) = \binom{N}{i}$ for $0\leq i \leq 2t$, and $A_t^* \lambda \in \mathcal M(I_t \times I_t)_{\succeq 0}$.
\begin{defin}
For $1 \leq t \leq N$, let
\begin{align*}
E_t := \min \Big\{ \lambda(f) : \; &  \lambda \in \Mcal(I_{2t}) \text{ positive and of positive type},\\[-0.5em]
& \lambda(I_{=i}) = \tbinom{N}{i} \text{ for } 0 \leq i \leq 2t\Big\}.
\end{align*}
\end{defin}
In Section~\ref{sec:generalized duality} we prove strong duality, which implies the minimum here is attained. By construction, the measure $\lambda_S$ is feasible for $E_t$, so $E_t \leq E$ for all $t$. A similar argument shows $E_{t} \leq E_{t+1}$ for all $t$. In Section~\ref{sec:energy convergence to the optimal energy}
 we prove $E_N = E$.

\section{Connection to the Lasserre hierarchy}
\label{sec:energy-lasserre}

We originally came up with the hierarchy $E_1,E_2,\ldots,E_N$ by first applying a variation of the Lasserre hierarchy from polynomial optimization to energy minimization problems where $V$ is finite. We then reformulate (and possibly weaken, but still preserving convergence) the constraints in the resulting relaxations in such a way that we can find a useful generalization to the case where $V$ is infinite.

A \emph{polynomial optimization problem} is a problem of the form
\[
\inf \Big\{ p(z) : z \in \R^n, \, g_j(z) \geq 0 \text{ for } j \in [m]\Big\},
\]
where $p, g_1,\ldots,g_m \in \R[z_1,\ldots,z_n]$ and $[m] = \{1,\ldots,m\}$. In general, both finding the global minimum and proving optimality of a feasible solution $z$ are difficult problems. A powerful and popular approach to obtain lower bounds on the global minimum is to use the moment techniques and dual sum-of-squares techniques by Lasserre~\cite{Lasserre2000} and Parrilo~\cite{Parrilo2000}, which give a sequence of increasingly strong semidefinite programming relaxations. 

We are interested in \emph{binary} polynomial optimization problems, which are problems of the form
\[
\inf \Big\{ p(z) : z \in \{0,1\}^n, \, g_j(z) \geq 0 \text{ for } j \in [m] \Big\}.
\]
These problems are special cases of polynomial optimization problems, because we can enforce the constraint $z_i \in \{0,1\}$ by adding the polynomial constraints $z_i(1-z_i) \geq 0$ and $-z_i(1-z_i) \geq 0$.
If we assume the space $V$ of an energy minimization problem to be finite, say, $V = [n]$ for some $n \in \N$, then we can write the energy minimization problem as the binary polynomial optimization problem
\[
\min \Big\{ \sum_{1 \leq i < j \leq n} F(\{i,j\}) z_i z_j : z \in \{0, 1\}^n, \, \kappa(z) \ge 0, -\kappa(z) \ge 0 \Big\}, 
\]
where 
\[
\kappa(z) := \sum_{i=1}^n z_i - N
\]
and $F$ is the pair potential as defined in Section~\ref{sec:derivation of the hierarchy via relaxations}.

Let $G$ be the graph with vertex set $[n]$ and no edges. Then $I_t$ is the set of all subsets of $[n]$ of cardinality at most $t$. In a binary polynomial optimization problem we may assume the polynomials to be square free, so that we can write
\[
p(z) = \sum_{S \in I_{\deg(p)}} p_S z^S, \quad \text{where} \quad z^S := \prod_{i \in S} z_i.
\]

Given an integer $t \in \N$ and a vector $y \in \R^{I_{2t}}$, define the $t$-th \emph{moment matrix} $M_t(y) \in \R^{I_t \times I_t}$ by $M_t(y)_{J, J'} = y_{J \cup J'}$. We define the $t$-th \emph{localizing matrix} with respect to a polynomial $g \in \R[x_1,\ldots,x_n]$ to be the partial matrix $M_t^g(y)$, which has the same row and column indices as $M_t(y)$, where the $(J,J')$-entry is set to
\[
\sum_{R \in I_{\deg(g)}} y_{J \cup J' \cup R}\, g_R
\]
whenever $|J \cup J'| \leq 2t-\deg(g)$, and left unspecified otherwise. 
By $M_t^p(y) \succeq 0$ we mean that $y$ is a vector such that $M_t^p(y)$ can be completed to a positive semidefinite matrix (this is a semidefinite constraint on $y$). 
Using these definitions we define for $t \geq \deg(p)$ the following semidefinite programming relaxation of the binary polynomial optimization problem  given above:
\begin{align*}
\inf \Big\{ \sum_{S \in I_{\deg(p)}} p_S y_S : \; & y \in \R_{\geq 0}^{I_{2t}}, \, y_\emptyset = 1, \; M_t(y) \succeq 0,\\[-1em]
& M_t^{g_j}(y) \succeq 0 \text{ for } j \in [m]\Big\}.
\end{align*}

These relaxations were introduced by Lasserre in \cite{Lasserre2002a}. The only modifications we make here is that we restrict $y$ to be nonnegative and that we take the localizing matrices $M_t^{g_j}(y)$ to be partial matrices indexed by $I_t$, instead of full matrices indexed by $I_{t-\lceil \deg(g_j)/2 \rceil}$. These make the semidefinite programs only slightly more difficult to solve, but in some cases, such as the case of energy minimization as discussed here, it leads to much stronger bounds.

In the binary polynomial optimization formulation for energy minimization we have two polynomial constraints: $\kappa(x) \geq 0$ and $-\kappa(x) \geq 0$. So, in the relaxation we have the constraints $M_t^\kappa(y) \succeq 0$ and $-M_t^\kappa(y) = M_t^{-\kappa}(y) \succeq 0$. This reduces to the constraint $M_t^\kappa(y) = 0$; that is, all specified entries of $M_t^\kappa(y)$ are required to be zero. These constraints reduce to the linear constraints
\[
N y_S = \sum_{j=1}^n y_{S \cup \{j\}} \quad \text{for all} \quad S \in I_{2t-1}.
\]
So, for energy minimization we get the relaxations
\begin{align*}
L_t := \inf \Big\{ \sum_{1 \leq i < j \leq n} F(\{i,j\}) \,y_{\{i,j\}} : \; & y \in \R_{\geq 0}^{I_{2t}}, \, y_\emptyset = 1, \; M_t(y) \succeq 0,\\[-1.3em]
& N y_S = \sum_{j=1}^n y_{S \cup \{j\}} \quad \text{for} \quad S \in I_{2t-1}\Big\}.
\end{align*}

The linear constraints in these problems become problematic when we want to generalize $V$ from the finite set $[n]$ to an uncountable set. The reason for this is that in the infinite dimensional generalization we want to use measures $\lambda \in \Mcal(I_{2t})$ instead of vectors $y \in \R^{I_{2t}}$ (because this allows for a satisfying duality theory; see Section~\ref{sec:generalized duality}), and these then become uncountably many ``thin'' constraints on $\lambda$. By thin we mean that the constraints are of the form $\lambda(E) = b$, where $E$ is a set with empty interior, which means these constraints have no grip on the part of a measure that is zero on sets with empty interior.

In the following lemma we show these constraints imply $2t+1$ very natural constraints on $y$. In particular, this lemma implies that for a feasible solution $y$ of $L_t$, we have $y_S = 0$ for all $S \subseteq [n]$ with $|S| > N$. If we replace the linear constraints in $L_t$ by these induced constraints, then we obtain the problem $E_t$ as defined in the previous section for the case where $V = [n]$. Therefore, our construction $E_t$ for compact $V$ extends a (possible) weakening of the bound $L_t$.

\begin{lemma}
Let $t \in \N_0$ and $y \in \R^{I_{2t}}$. If
\[
y_\emptyset = 1 \quad \text{and} \quad N y_S = \sum_{j=1}^n y_{S \cup \{j\}} \quad \text{for all} \quad S \in I_{2t-1},
\]
then
\[
\sum_{S \in I_{=i}} y_S = \binom{N}{i} \quad \text{for all} \quad 0 \leq i \leq 2t.
\] 
\end{lemma}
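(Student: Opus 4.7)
The plan is to set $a_i := \sum_{S \in I_{=i}} y_S$ for $0 \leq i \leq 2t$ and prove $a_i = \binom{N}{i}$ by induction on $i$. The base case $i=0$ is immediate, since $I_{=0} = \{\emptyset\}$ and $a_0 = y_\emptyset = 1 = \binom{N}{0}$.

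For the inductive step, with $1 \leq i \leq 2t$, I would sum the hypothesis $N y_S = \sum_{j=1}^n y_{S \cup \{j\}}$ over all $S \in I_{=i-1}$. The left-hand side gives $N a_{i-1}$ directly. On the right-hand side I would split the inner sum according to whether $j \in S$ or $j \notin S$. For $j \in S$ one has $S \cup \{j\} = S$, and since each $S \in I_{=i-1}$ contributes $i-1$ such terms, the total contribution is $(i-1)\,a_{i-1}$. For $j \notin S$, the set $T := S \cup \{j\}$ lies in $I_{=i}$, and every such $T$ arises in exactly $i$ ways (one for each $j \in T$, with $S = T \setminus \{j\}$), so this part totals $i\,a_i$. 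Combining, $N a_{i-1} = (i-1)\,a_{i-1} + i\,a_i$, i.e.\ $a_i = \tfrac{N-i+1}{i}\, a_{i-1}$. This matches the standard Pascal-type recursion $\binom{N}{i} = \tfrac{N-i+1}{i}\binom{N}{i-1}$, so the induction closes.

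The only formality to check is that the expression $y_{S \cup \{j\}}$ is well-defined, i.e.\ that $S \cup \{j\} \in I_{2t}$ whenever $S \in I_{=i-1}$ with $i \leq 2t$. In the setting of this lemma the graph $G$ is edgeless on $[n]$, so $I_{2t}$ is simply the family of subsets of $[n]$ of size at most $2t$, and $|S \cup \{j\}| \leq i \leq 2t$ causes no issue. I do not anticipate any real obstacle: the whole argument is a one-step double-counting identity, and the only thing to handle carefully is the bookkeeping that distinguishes the $j \in S$ and $j \notin S$ contributions.
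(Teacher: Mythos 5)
Your proof is correct and follows essentially the same strategy as the paper's: induction on $i$, with the key step being the double-counting split of $\sum_{j=1}^n y_{S\cup\{j\}}$ into the $j\in S$ and $j\notin S$ contributions, yielding the Pascal-type recursion $a_i = \tfrac{N-i+1}{i}\,a_{i-1}$. The paper merely arranges the same algebra in the opposite direction, starting from $\sum_{S\in I_{=i}} y_S$ and rewriting, rather than summing the hypothesis over $S\in I_{=i-1}$.
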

\begin{proof}
For $i=0$ we have 
\[
\sum_{S \in I_{=i}} y_S = y_\emptyset = 1 = \binom{N}{i}.
\]
If $\sum_{S \in I_{=i-1}} y_S = \binom{N}{i-1}$ for some $0 \leq i \leq 2t-1$, then
\begin{align*}
\sum_{S \in I_{=i}} y_S 
&= \frac{1}{i} \sum_{S \in I_{=i-1}} \sum_{j \in [n] \setminus S} y_{S \cup \{j\}} 
= \frac{1}{i} \sum_{S \in I_{=i-1}} \left( \sum_{j = 1}^n y_{S \cup \{j\}} - |S| y_S \right)\\ 
&= \frac{1}{i} \sum_{S \in I_{=i-1}} \big(Ny_S - (i - 1) y_S\big) = \frac{1}{i} \sum_{S \in I_{=i-1}} (N - i + 1) y_S\\
&= \frac{N - i + 1}{i} \sum_{S \in I_{=i-1}} y_S = \frac{N - i + 1}{i} \binom{N}{i-1} = \binom{N}{i}.
\end{align*}
Hence, the proof follows by induction.
\end{proof}

\section{Convergence to the ground state energy}
\label{sec:energy convergence to the optimal energy}

In this section we show that the hierarchy $\{E_t\}$ converges to the optimal energy $E$ in at most $N$ steps. Moreover, the extreme points of the feasible set of $E_N$ are precisely the measures $\chi_S$ with $S \in I_{=N}$. These results follow from the following proposition, whose proof follows directly from the proof of \cite{LaatVallentin2014}*{Proposition 4.1}.
\begin{proposition}
\label{prop:lambda to sigma}
For each measure $\lambda \in \Mcal(I_{2t})$ there exists a unique measure $\sigma \in \Mcal(I_{2t})$ such that $\lambda = \int \chi_S \, d\sigma(S)$. If $\lambda$ is supported on $I_t$ and is of positive type, that is, $A_t^* \lambda \in \Mcal(I_t \times I_t)_{\succeq 0}$, then $\sigma$ is a positive measure supported on $I_t$.
\end{proposition}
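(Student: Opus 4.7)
The plan is to adapt the proof of \cite{LaatVallentin2014}*{Proposition 4.1} to this setting. For existence and uniqueness of $\sigma$, I would use Möbius inversion on the Boolean lattice of subsets. Define the candidate signed Radon measure $\sigma \in \Mcal(I_{2t})$ by
$$
\sigma(f) := \int_{I_{2t}} \sum_{R \subseteq S} (-1)^{|S|-|R|} f(R) \, d\lambda(S), \quad f \in \Ccal(I_{2t}).
$$
The elementary Möbius identity $\sum_{R \,:\, R' \subseteq R \subseteq S} (-1)^{|S|-|R|} = \one[R' = S]$ then yields $\int \chi_{S'} \, d\sigma(S') = \lambda$ by a direct computation, proving existence. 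Uniqueness is immediate, since the displayed formula explicitly inverts the linear map $\sigma \mapsto \int \chi_{S'} \, d\sigma(S')$. Moreover, if $\lambda$ is supported on $I_t$, the integrand involves only $f(R)$ for $R \subseteq S$ with $S \in I_t$, and subsets of elements of $I_t$ again lie in $I_t$; hence $\sigma$ is supported on $I_t$.

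For positivity of $\sigma$, I would substitute $\lambda = \int \chi_{S'} \, d\sigma(S')$ into the identity $A_t^* \lambda(K) = \lambda(A_t K)$. Using that $\sigma$ is supported on $I_t$ (so every $J \subseteq S'$ with $S' \in I_t$ automatically lies in $I_t$), one obtains, for each $\phi \in \Ccal(I_t)$ and its associated rank-one positive definite kernel $K(J,J') = \phi(J)\phi(J')$,
$$
A_t^* \lambda(K) = \int_{I_t} \Phi(S')^2 \, d\sigma(S'), \quad \text{where} \quad \Phi(S') := \sum_{J \subseteq S'} \phi(J).
$$
The positive type hypothesis on $\lambda$ therefore gives $\int \Phi^2 \, d\sigma \ge 0$ for every such $\phi$.

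The key observation is that $\phi \mapsto \Phi$ is a linear bijection of $\Ccal(I_t)$ onto itself, with inverse given again by Möbius inversion, $\phi(S') = \sum_{J \subseteq S'} (-1)^{|S'|-|J|} \Phi(J)$. Granting this, $\Phi$ ranges over all of $\Ccal(I_t)$ as $\phi$ does; since every continuous nonnegative function on the compact metric space $I_t$ admits a continuous square root, we conclude $\int g \, d\sigma \ge 0$ for every $g \in \Ccal(I_t)$ with $g \ge 0$, whence $\sigma \ge 0$. The main technical obstacle is verifying continuity of the inverse $\Phi \mapsto \phi$ on $\Ccal(I_t)$: this reduces to showing that each stratum $I_{=i}$ is open-and-closed in $I_t$, so that $|S'|$ is locally constant and the sum $\sum_{J \subseteq S'}$ depends continuously on $S'$ in the quotient topology. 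This is the continuous analogue of the triangular Möbius inversion carried out in the finite setting of \cite{LaatVallentin2014}*{Proposition 4.1}.
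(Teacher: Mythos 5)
Your proof is correct and follows the same M\"obius-inversion strategy as the proof of \cite{LaatVallentin2014}*{Proposition 4.1}, which is exactly the argument the paper defers to: construct $\sigma$ as the M\"obius transform of $\lambda$, pair the positive-type hypothesis against rank-one kernels $\phi\otimes\phi$ to get $\int \Phi^2\,d\sigma \ge 0$, and use that $\phi\mapsto\Phi$ is a bijection of $\Ccal(I_t)$ together with continuous square roots to conclude $\sigma\ge 0$. The continuity of the zeta transform $\phi\mapsto\Phi$ and of its inverse does rest, as you flag, on the topology of $I_t$ (each stratum $I_{=i}$ open-and-closed, quotient topology inherited from $V^i/S_i$), which is established in \cite{LaatVallentin2014} and is legitimately taken as given here.
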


In the above proposition $\lambda$ is defined using a vector-valued integral, where the notation $\lambda = \int \chi_S \, d\sigma(S)$ means
\[
\lambda(g) = \int \chi_S(f) \, d\sigma(S) \quad \text{for all} \quad g \in \Ccal(I_N).
\]

Using this proposition we can prove the convergence result:

\begin{proposition}
The $N$th step $E_N$  gives the optimal energy $E$.
\end{proposition}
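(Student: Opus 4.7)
The plan is to combine the already established inequality $E_N \leq E$ with a matching lower bound $E_N \geq E$ obtained by decomposing any feasible $\lambda$ for $E_N$ into a convex combination of measures $\chi_S$ with $S \in I_{=N}$. The decomposition will come directly from Proposition~\ref{prop:lambda to sigma}, and the key point will be to show that the representing measure $\sigma$ is in fact a probability measure concentrated on $I_{=N}$.

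First, I would fix an optimal (or arbitrary feasible) $\lambda \in \Mcal(I_{2N})$ for $E_N$ and argue that $\lambda$ is supported on $I_N$. Since $\binom{N}{i} = 0$ for $i > N$, the constraints $\lambda(I_{=i}) = 0$ for $N < i \leq 2N$ combined with positivity of $\lambda$ force $\lambda$ to vanish on $I_{2N} \setminus I_N$. Since $\lambda$ is of positive type and supported on $I_N = I_t$ (with $t = N$), Proposition~\ref{prop:lambda to sigma} yields a unique positive measure $\sigma \in \Mcal(I_N)$ with
\[
\lambda = \int \chi_S \, d\sigma(S).
\]

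Next, I would extract information about $\sigma$ from the moment constraints on $\lambda$. Since $\chi_S(I_{=0}) = 1$ for every $S$, the constraint $\lambda(I_{=0}) = 1$ gives $\sigma(I_N) = 1$, so $\sigma$ is a probability measure. Likewise, for $S \in I_N$ we have $\chi_S(I_{=N}) = 1$ if $|S|=N$ and $0$ otherwise, so $\lambda(I_{=N}) = \binom{N}{N} = 1$ translates to $\sigma(I_{=N}) = 1$. Together with $\sigma(I_N) = 1$ and $\sigma \geq 0$, this forces $\sigma$ to be concentrated on $I_{=N}$.

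Finally, since each $S \in I_{=N}$ satisfies $\chi_S(f) \geq E$ by the very definition of $E$, integrating gives
\[
\lambda(f) = \int \chi_S(f) \, d\sigma(S) \geq E \int d\sigma(S) = E,
\]
which yields $E_N \geq E$ and hence $E_N = E$. The only part that needs any care is the support argument in Step~1 and the verification that Proposition~\ref{prop:lambda to sigma} applies with $t = N$; everything else is a direct computation with the two prescribed moments $\lambda(I_{=0})$ and $\lambda(I_{=N})$. As a side benefit, the same argument identifies the extreme points of the feasible set of $E_N$ as exactly the measures $\chi_S$ with $S \in I_{=N}$, because these correspond to the extreme points (Dirac masses) of the set of probability measures on $I_{=N}$ under the affine map $\sigma \mapsto \int \chi_S \, d\sigma(S)$.
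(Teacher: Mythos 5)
Your proof is correct and follows essentially the same route as the paper's: discard mass above level $N$ using the constraints $\lambda(I_{=i}) = \binom{N}{i} = 0$ for $i > N$ together with positivity, invoke Proposition~\ref{prop:lambda to sigma} to write $\lambda = \int \chi_S \, d\sigma(S)$ with $\sigma \geq 0$, then use the $i=0$ and $i=N$ moment constraints to show $\sigma$ is a probability measure concentrated on $I_{=N}$, and finally integrate $\chi_S(f) \geq E$. The closing observation about the extreme points also matches the remark the paper makes immediately after its proof.
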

\begin{proof}
Let $\lambda \in \Mcal(I_{2N})$ be feasible for $E_N$. We have $\lambda \geq 0$ and $\lambda(I_{=i}) = \tbinom{N}{i} = 0$ for $i > N$, so $\lambda$ is supported on $I_N$. Since $\lambda$ is also of positive type, by Proposition~\ref{prop:lambda to sigma} there exists a positive measure $\sigma \in \Mcal(I_N)$ such that $\lambda = \int \chi_S \, d\sigma(S)$. 
We have 
\[
1 = \tbinom{N}{0} = \lambda(\{\emptyset\}) = \int \chi_S(\{\emptyset\}) \, d\sigma(S) = \int d\sigma = \sigma(I_N),
\]
so $\sigma$ is a probability measure. Moreover, 
\[
1 = \tbinom{N}{N} = \lambda(I_{=N}) = \int \chi_S(I_{=N}) \, d\sigma(S) = \sigma(I_{=N}),
\]
so $\sigma$ is supported on $I_{=N}$. The objective value of $\lambda$ is given by 
\[
\lambda(f) = \int \chi_S(f)\,d\sigma(S)  \geq \int E \,d\sigma = E,
\]
where the inequality follows since $\chi_S(f) \geq E$ for all $S \in I_{=N}$.
It follows that $E_N \geq E$. Since we already known $E_N \leq E$, this completes the proof. 
\end{proof}

Using the ideas of the above proof together with the proof of \cite{LaatVallentin2014}*{Proposition 4.1} it follows that the extreme points of the feasible set of $E_N$ are precisely the measures $\chi_S$ with $S \in I_{=N}$.

\section{Optimization with infinitely many binary variables}
\label{sec:infinite dimensional optimization}

We discuss the duality theory and symmetry reduction for a more general type of problems that arise naturally when forming moment relaxations of optimization problems with infinitely many binary variables. This includes the moment relaxations for both energy minimization and packing problems. Although there are infinitely many variables, we assume that in a feasible solution only finitely many of them are active (nonzero) at the same time, and active variables cannot be too close. For this we assume $G = (V,E)$ to be a compact \emph{topological packing graph} as discussed in Section~\ref{sec:derivation of the hierarchy via relaxations}.
 
\begin{defin}
	\label{def:generalbinary}
Let $G$ be a topological packing graph. Given integers $t$ and $m$, functions $g, g_1,\ldots, g_m \in \Ccal(I_{2t})$, and scalars $b_1,\ldots,b_m \in \R$, we define the optimization problem $H = H_{G,t}^\mathrm{inf}(g;g_1,\ldots,g_m;b_1,\ldots,b_m)$ by
\[
H = \inf \Big\{ \lambda(g) : \lambda \in \Mcal(I_{2t})_{\geq 0}, \, A_t^*\lambda \in \Mcal(I_t \times I_t)_{\succeq 0}, \, \lambda(g_j) = b_j \text{ for } j \in [m] \Big\}.
\]
\end{defin}
For energy minimization we have 
\[
E_t = H_{G,t}^\mathrm{min}(F;1_{I_{=0}},\ldots,1_{I_{=2t}};\tbinom{N}{0},\ldots,\tbinom{N}{2t}),
\]
where $G$ and $F$ are the graph and potential function as defined in Section~\ref{sec:derivation of the hierarchy via relaxations}. For packing problems in discrete geometry, the $t$-th step of the hierarchy from \cite{LaatVallentin2014} is given by $H_{G,t}^\mathrm{max}(1_{I_{=1}}; 1_{\{\emptyset\}}, 1)$, where $G$ is the topological packing graph defining the packing problem.

\subsection{Duality}
\label{sec:generalized duality}

The optimization problem $H$ is a conic program over the cone 
\[
\Mcal(I_t \times I_t)_{\succeq 0} \times \Mcal(I_{2t})_{\geq 0},
\]
where we refer to \cite{Barvinok2002} for an introduction to conic programming. If we endow both $\Mcal(I_t \times I_t)_\mathrm{sym}$ and $\Mcal(I_{2t})$ with the weak* topologies, then the topological dual spaces can be identified with $\Ccal(I_t \times I_t)_\mathrm{sym}$ and $\Ccal(I_{2t})$. The tuples $(\Ccal(I_{2t}), \Mcal(I_{2t}))$ and $(\Ccal(I_t \times I_t)_\mathrm{sym}, \Mcal(I_t \times I_t)_\mathrm{sym})$ are dual pairs, and  the dual pairings 
\[
\langle g, \lambda) = \lambda(g) = \int_{I_t} g(S) \, d\lambda(S) \quad \text{and} \quad \langle K, \mu \rangle = \mu(K) = \int_{I_t \times I_t} K(J,J') \, d\mu(J,J')
\]
are nondegenerate, by which we mean that $\lambda(g) = \mu(K) = 0$ for all $g \in \Ccal(I_{2t})$ and $K \in \Ccal(I_t \times I_t)_{\mathrm{sym}}$ implies $\lambda=0$ and $\mu = 0$. The dual cones are then given by $\Ccal(I_t \times I_t)_{\succeq 0}$ and $\Ccal(I_{2t})_{\geq 0}$, and by conic duality we obtain the dual conic program
\[
H^* = \sup \Big\{ \sum_{i=1}^m b_i a_i : a \in \R^m, \, K \in \Ccal(I_t \times I_t)_{\succeq 0}, \, g - \sum_{i=1}^m a_i g_i - A_tK \in \Ccal(I_{2t})_{\geq 0} \Big\}.
\]
By weak duality we have $H^* \leq H$. The following theorem, which is a generalization of the results in \cite{LaatVallentin2014}*{Chapter 3}, gives a sufficient condition for strong duality.
\begin{theorem}
\label{thm:strong duality of generalized hierarchy}
If $H$ admits a feasible solution, and if the set 
\begin{equation*}\label{eq:mustbe0}
\Big\{ \lambda \in \Mcal(I_{2t})_{\geq 0} : A_t^*\lambda \in \Mcal(I_t \times I_t)_{\succeq 0}, \, \lambda(g) = \lambda(g_1) = \cdots = \lambda(g_m) = 0 \Big\},
\end{equation*}
is trivial; that is, contains only the zero measure $\lambda = 0$, then strong duality holds: $H = H^*$ and the minimum in $H$ is attained.
\end{theorem}
\begin{proof}
To show that strong duality holds we use the closed cone condition as described in  \cite[Chapter IV.7]{Barvinok2002}. This closed cone condition says that if $H$ admits a feasible solution, and the cone 
\[
K = \big\{ (A_t^* \lambda - \mu, \lambda(g_1), \ldots, \lambda(g_m), \lambda(g)) : \lambda \in \Mcal(I_{2t})_{\geq 0}, \, \mu \in \Mcal(I_t \times I_t)_{\succeq 0} \big\}
\]
is closed in $\Mcal(I_t \times I_t)_{\succeq 0} \times \R^m \times \R$, then strong duality holds: $H = H^*$ and the minimum in $H$ is attained. 

This cone $K$ decomposes as the Minkowski difference $K = K_1 - K_2$, with
\[
K_1 = \{ (A_t^* \lambda, \lambda(g_1), \ldots, \lambda(g_m), \lambda(g)) : \lambda \in \Mcal(I_{2t})_{\geq 0} \}
\]
and
\[
K_2 = \{ (\mu, 0, 0) : \mu \in \Mcal(I_t \times I_t)_{\succeq 0} \}.
\]

By Klee~\cite{Klee1955} and Dieudonn\'e~\cite{Dieudonne1966}, a sufficient condition for the cone $K$ to be closed is that $K_1 \cap K_2 = \{0\}$, $K_1$ is closed and locally compact, and $K_2$ is closed. The first condition $K_1 \cap K_2 = \{0\}$ follows immediately from the hypothesis of the theorem. In \cite{LaatVallentin2014} it is shown that $K_1$ is closed and locally compact. That $K_2$ is closed follows immediately from $\Mcal(I_t \times I_t)_{\succeq 0}$ being closed.
\end{proof}

In \cite{LaatVallentin2014}*{Lemma 3.1.5} it is shown that the set 
\begin{equation*}\label{eqset0}
\big\{ \lambda \in \Mcal(I_{2t})_{\geq 0} : A_t^*\lambda \in \Mcal(I_t \times I_t)_{\succeq 0}, \, \lambda(\{\emptyset\}) = 0 \big\}
\end{equation*}
is trivial, which means Theorem~\ref{thm:strong duality of generalized hierarchy} applies to the case where each $\lambda \in \Mcal(I_{2t})_{\geq 0}$ with $\lambda(g) = \lambda(g_1) = \cdots = \lambda(g_m) = 0$ satisfies $\lambda(\{\emptyset\}) = 0$. The programs $E_t$ are such examples, since $g_1$ here is the indicator function of $I_{=0}$, so $\lambda(g_1) = 0$ implies $\lambda(\{\emptyset\})  = 0$. For each $t$, the program $E_t$ admits a feasible solution (see Section~\ref{sec:derivation of the hierarchy via relaxations}). Thus by Theorem~\ref{thm:strong duality of generalized hierarchy} strong duality holds for the pair $(E_t, E_t^*)$.

\subsection{Symmetry reduction}
\label{sec:energy:symmetry reduction}

Given a compact group $\Gamma$ with a continuous action on the vertex set $V$ of a compact topological packing graph $G$, we say the optimization problem $H_{G,t}^{\inf}(g; g_1, \ldots, g_m; b_1,\ldots,b_m)$ is \emph{$\Gamma$-invariant} if
\begin{enumerate}
\item the adjacency relations in $G$ are invariant under the action of $\Gamma$, so that the action extends to a continuous action on $I_N$ given by $\gamma \emptyset = \emptyset$ and $\gamma\{x_1,\ldots,x_N\} = \{\gamma x_1,\ldots,\gamma x_N\}$;
\item the functions $g,g_1,\ldots,g_m$ are $\Gamma$-invariant. 
\end{enumerate}   
Using this definition, the relaxations $E_t = H_{G,t}^\mathrm{min}(F;1_{I_{=0}},\ldots,1_{I_{=2t}};\tbinom{N}{0},\ldots,\tbinom{N}{2t})$ are $\Gamma$-invariant whenever $(V,d)$ is $\Gamma$-invariant.

We use this symmetry to restrict to invariant variables in both the primal and dual optimization problems. To make the best use of the symmetry we should take $\Gamma$ as large as possible. For energy minimization problems this means we should take it to be the symmetry group of the metric space $(V,d)$, which for the Thomson problem means we take $\Gamma = O(3)$.

Let $\Ccal(I_{2t})^\Gamma$ be the subspace consisting of $\Gamma$-invariant functions, and $\Ccal(I_t \times I_t)_\mathrm{sym}^\Gamma$ the subspace of $\Gamma$-invariant symmetric kernels, and define the cones 
\[
\Ccal(I_{2t})_{\geq 0}^\Gamma = \Ccal(I_{2t})_{\geq 0} \cap \Ccal(I_{2t})^\Gamma \quad  \text{and}  \quad \Ccal(I_t \times I_t)_{\succeq 0}^\Gamma = \Ccal(I_t \times I_t)_{\succeq 0} \cap \Ccal(I_t \times I_t)_\mathrm{sym}^\Gamma.
\]

Given a function $g \in \Ccal(I_{2t})$, we define its symmetrization $\bar g \in \Ccal(I_{2t})^\Gamma$ by 
\[
\bar g(S) = \int_\Gamma g(\gamma S)\, d\gamma,
\]
where we integrate over the normalized Haar measure of $\Gamma$. Similarly, given a kernel $K \in \Ccal(I_t \times I_t)_{\mathrm{sym}}$, we define its symmetrization $\bar K$ by 
\[
\bar K(J, J') = \int_\Gamma K(\gamma J, \gamma J') \, d\gamma.
\]
Using these definitions we can define the symmetrizations $\bar\lambda$ and $\bar\mu$ of measures $\lambda \in \Mcal(I_{2t})$ and $\mu \in \Mcal(I_t \times I_t)_\mathrm{sym}$ by $\bar\lambda(g) = \lambda(\bar g)$ and $\bar\mu(K) = \mu(\bar K)$. It follows that the spaces $\Mcal(I_{2t})^\Gamma$ and $\Mcal(I_t \times I_t)_\mathrm{sym}^\Gamma$ can be identified with the topological duals of $\smash{\Ccal(I_{2t})^\Gamma}$ and $\smash{\Ccal(I_t \times I_t)_\mathrm{sym}^\Gamma}$, and as in the nonsymmetrized situation these form dual pairs.

We have
\[
A_tK(\gamma S) = \sum_{\substack{J,J' \in I_t \\ J \cup J' = \gamma S}} K(J,J') = \sum_{\substack{J,J' \in I_t \\ \gamma^{-1}J \cup \gamma^{-1}J' = S}} K(J,J') = \sum_{\substack{J,J' \in I_t \\ J \cup J' = S}} K(\gamma J, \gamma J'),
\]
so $A_t$ maps $\Gamma$-invariant kernels to $\Gamma$-invariant functions. This means we can view $A_t$ as an operator from $\smash{\Ccal(I_t \times I_t)_{\mathrm{sym}}^\Gamma}$ to $\Ccal(I_{2t})^\Gamma$, and we can view the adjoint $A_t^*$ as an operator from $\Mcal(I_{2t})^\Gamma$ to $\Mcal(I_t \times I_t)_\mathrm{sym}^\Gamma$.

We define the symmetrization of a $\Gamma$-invariant primal problem $H$ from Definition~\ref{def:generalbinary} by
\[
H_\Gamma = \min \Big\{ \lambda(g) : \lambda \in \Mcal(I_{2t})_{\geq 0}^\Gamma, \, A_t^* \lambda \in \Mcal(I_t \times I_t)_{\succeq 0}^\Gamma, \,\lambda(g_i) = b_i \text{ for } i \in [m] \Big\},
\]
and the symmetrization of the dual program $H^*$ by
\[
H_\Gamma^* = \sup \Big\{ \sum_{i=1}^m b_i a_i : a \in \R^m, \, K \in \Ccal(I_t \times I_t)_{\succeq 0}^\Gamma, \, g - \sum_{i=1}^m a_i g_i - A_tK \in \Ccal(I_{2t})_{\geq 0}^\Gamma \Big\}. 
\]
Given feasible solutions $\lambda$ and $K$ of $H$ and $H^*$, the symmetrizations $\bar\lambda$ and $\bar K$ are feasible for $H_\Gamma$ and $H_\Gamma^*$  and have the same objective values. This shows $H = H_\Gamma$ and $H^* = H_\Gamma^*$. As a result we have $H_\Gamma = H_\Gamma^*$, which alternatively could be shown by proving strong duality for the symmetrized problems.

In the following section we discuss the construction of a nested sequence $\{C_d\}_{d=0}^\infty$ of inner approximating cones of $\Ccal(I_t \times I_t)_{\succeq 0}^\Gamma$ such that the union $\cup_{d=0}^\infty C_d$ is uniformly dense in $\smash{\Ccal(I_t \times I_t)_{\succeq 0}^\Gamma}$. This sequence is constructed in such a way that optimization over $C_d$ is easies than optimization over the original cone, and gets more difficult as $d$ grows.
Define $\smash{H_d^*}$ to be the optimization problem $\smash{H_\Gamma^*}$ with the cone of invariant, positive definite kernels replaced by its inner approximation $C_d$. Now we give a sufficient condition for the programs $H_d^*$ to approximate the program $H_\Gamma^*$. The following proposition applies for the case of $H = E_t$ by selecting $c = 0$ and $y = -e$, where $e$ is the all-ones vector. This shows that if we let $E_{t,d}^*$ be the problem $E_t^*$ with the cone $\Ccal(I_t \times I_t)_{\succeq 0}$ replaced by $C_d$, then $E_{t,d}^* \to E_t$ as $d \to \infty$.

\begin{proposition}
If there exists a scalar $c \in \R$ and a vector $y \in \R^m$ for which $c g - \sum_{i=1}^m y_i g_i$ is a strictly positive function, then $H_d^* \to H^*$ as $d \to \infty$.
\end{proposition}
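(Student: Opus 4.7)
The plan is to establish $H_d^* \to H^*$ by sandwiching. Since $C_d \subseteq \Ccal(I_t \times I_t)_{\succeq 0}^\Gamma$, every pair feasible for $H_d^*$ is feasible for $H^*$, giving $H_d^* \leq H^*$ for all $d$; nestedness of $\{C_d\}$ makes $d \mapsto H_d^*$ nondecreasing. What remains is to show $\liminf_d H_d^* \geq H^*$, and for this I fix $\varepsilon > 0$, take $(a, K)$ feasible for $H^*$ with $\sum_i b_i a_i \geq H^* - \varepsilon/2$, and build, for $d$ sufficiently large, a pair $(a', K_d)$ feasible for $H_d^*$ with $\sum_i b_i a'_i \geq \sum_i b_i a_i - \varepsilon/2$.

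The first and main step is a Slater-style perturbation of $(a, K)$ to acquire strict slack in the cone constraint. Set $p := cf - \sum_i y_i g_i$; by hypothesis $p > 0$, and compactness of $I_{2t}$ gives $p \geq \delta$ pointwise for some $\delta > 0$. I will produce $(a', K')$ with $K' \in \Ccal(I_t \times I_t)_{\succeq 0}^\Gamma$, with $|\sum_i b_i(a'_i - a_i)| < \varepsilon/2$, and with
\[
f - \sum_i a'_i g_i - A_t K' \geq \eta \quad \text{pointwise on } I_{2t}
\]
for some $\eta > 0$. When $c = 0$ this is immediate from the pure $a$-shift $a'_i := a_i + \alpha y_i$, $K' := K$: the new slack equals the old plus $\alpha p \geq \alpha \delta$, while the objective shift $\alpha \sum_i b_i y_i$ is controlled by choosing $\alpha > 0$ small. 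When $c > 0$, the point $(y/c, 0)$ has slack $p/c \geq \delta/c$ on its own, and the convex combination $((1-\alpha) a + \alpha y/c, (1-\alpha) K)$ has slack at least $\alpha \delta/c$ with a linear objective shift. When $c < 0$, either $H^*$ is infeasible (in which case $H_d^*$ is too, so the conclusion is vacuous) or a similar convex combination supplemented by a rank-one PSD correction to $K$ delivers the required strict slack; this sign is the one technical subtlety.

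The second step is uniform approximation. By density of $\bigcup_d C_d$ in $\Ccal(I_t \times I_t)_{\succeq 0}^\Gamma$, choose $K_d \in C_d$ with $\|K_d - K'\|_\infty \to 0$. The operator $A_t$ is sup-norm bounded, since
\[
\|A_t K\|_\infty \leq N_t \|K\|_\infty, \qquad N_t := \max_{S \in I_{2t}} \#\{(J, J') \in I_t \times I_t : J \cup J' = S\} \leq 3^{2t},
\]
so $A_t K_d \to A_t K'$ uniformly. Once $d$ is large enough that $\|A_t K_d - A_t K'\|_\infty \leq \eta$, we get pointwise
\[
\bigl(f - \textstyle\sum_i a'_i g_i - A_t K_d\bigr) = \bigl(f - \textstyle\sum_i a'_i g_i - A_t K'\bigr) - (A_t K_d - A_t K') \geq \eta - \eta = 0,
\]
so $(a', K_d)$ is $H_d^*$-feasible, yielding $H_d^* \geq \sum_i b_i a'_i \geq H^* - \varepsilon$. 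Letting $\varepsilon \downarrow 0$ closes the sandwich.

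The principal obstacle is the Slater perturbation: the hypothesis only furnishes a strictly positive combination $p$, not a strictly feasible dual point directly, and turning the former into the latter requires the sign case analysis above. The density/continuity portion is then formal, resting only on the finite sums that define $A_t$ and on compactness of $I_{2t}$.
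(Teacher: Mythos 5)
Your overall structure—(i) monotonicity and $H_d^* \le H^*$ by cone inclusion, (ii) perturb a near-optimal $(a,K)$ to a point with uniformly strict slack, (iii) push into $C_d$ by density and sup-norm boundedness of $A_t$—is exactly the paper's strategy, and your cases $c=0$ and $c>0$ are correct and match. The gap is the case $c<0$, which you explicitly flag but do not actually resolve. Two issues. First, the claim that $c<0$ forces $H^*$ to be infeasible is wrong: the hypothesis only says the \emph{particular} pair $(y/c,0)$ is infeasible (since $f-\sum_i(y_i/c)g_i = p/c \le \kappa/c < 0$); other pairs $(a,K)$ can perfectly well be feasible. Second, the ``rank-one PSD correction'' idea does not work: for any PSD kernel $K_0$ one has $A_tK_0(\emptyset)=K_0(\emptyset,\emptyset)\ge 0$ (and likewise at singletons), so $A_tK_0$ is never pointwise strictly negative, and adding a positive multiple of a PSD kernel to $K$ can only decrease the slack, not increase it uniformly. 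Your convex-combination step $((1-\alpha)a+\alpha\,y/c,(1-\alpha)K)$ really does require $p/c>0$, which fails when $c<0$.

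The paper avoids the case split entirely by taking
\[
a' = \frac{a+\delta y}{1+\delta c}, \qquad K' = \frac{1}{1+\delta c}\,K,
\]
for small $\delta>0$, and observing that
\[
f-\sum_i a_i' g_i - A_t K' \;=\; \frac{1}{1+\delta c}\Bigl[\bigl(f-\textstyle\sum_i a_i g_i - A_t K\bigr) + \delta p\Bigr] \;\ge\; \frac{\delta\kappa}{1+\delta c} > 0,
\]
which holds for every sign of $c$ once $\delta$ is small enough that $1+\delta c>0$. For $c>0$ this is precisely your convex combination with $\alpha=\delta c/(1+\delta c)\in(0,1)$; for $c<0$ the coefficient $\alpha$ becomes negative, so the same formula is a ``scaling-out'' rather than a convex combination---still legal because PSD kernels are closed under positive scaling. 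Replacing your case analysis with this single algebraic identity closes the gap and gives exactly the paper's argument.
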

\begin{proof}
Select $c \in \R$ and $y \in \R^m$ for which $c g - \sum_{i=1}^m y_i g_i$ is a strictly positive function. Let $(a, K)$ be a feasible solution of $H_\Gamma^*$ and let $\varepsilon > 0$. Let 
\[
\kappa = \min_{S \in I_{2t}} \left(c g(S) - \sum_{i=1}^m y_i g_i(S)\right),
\]
where the minimum is attained and strictly positive because we optimize a continuous function over a compact set. We have $g - \sum_{i=1}^m a_i g_i - A_t K \geq 0$, so, for any $\delta \geq 0$, we have 
\[
g + \delta c g - \sum_{i=1}^m (a_i + \delta y_i) g_i - A_t K \geq \delta\kappa,
\]
and hence
\[
g - \sum_{i=1}^m \frac{a_i+\delta y_i}{1+\delta c} g_i - A_t \Big(\frac{1}{1+\delta c}K\Big) \geq \frac{\delta\kappa}{1+\delta c}.
\]
Since $\bigcup_{d=0}^\infty C_d$ is uniformly dense in $\Ccal(I_t \times I_t)_{\succeq 0}^\Gamma$, and since $A_t$ is a bounded operator, there exists a $d_\delta \in \N_0$ and a kernel $L_\delta \in C_{d_\delta}$ such that 
\[
\left\|A_t \Big(\frac{1}{1+\delta c}K\Big) - A_t L_\delta \right\|_\infty \leq \frac{\delta\kappa}{1+\delta c}.
\]
This means that 
\[
f - \sum_{i=1}^m \frac{a_i+\delta y_i}{1+\delta c} g_i - A_t L_\delta \geq 0.
\]
So, for all $\delta > 0$, $((a+\delta y)/(1+\delta c), L_\delta)$ is feasible for $H_{d_\delta}^*$, and as $\delta \downarrow 0$, its objective value goes to the objective value of $(a,K)$. This shows $H_d^* \to H^*$ as $d \to \infty$.
\end{proof}

\section{Approximating the cone of invariant positive definite kernels}
\label{chap:energy:sec:explicit symmetry}

In this section we show how to approximate the cone of invariant positive definite kernels by a sequence $C_1,C_2,\ldots$ of simpler inner approximating cones. This sequence converges in the sense that the union of the inner approximating cones is uniformly dense in the cone of invariant positive definite kernels, and each $C_d$ is isomorphic to a finite product of positive semidefinite matrix cones (their elements are said to be block diagonalized). 

In Section~\ref{subsecSymmetry adapted systems and zonal matrices} we first give some background on symmetry adapted systems and zonal matrices and how these can be used to construct inner approximating cones. Then we use the results from Appendix~\ref{sec:invariant posdef kernels} to show the existence of an inner approximating sequence as described above for the cone $\smash{\Ccal(X \times X)_{\succeq 0}^\Gamma}$, where $X$ is a compact metric space and $\Gamma$ a compact group with a continuous action on $X$. 

In Section~\ref{sect:harmonci analysis on subset spaces} we show how such a sequence can be constructed explicitly for the case where $X=I_t$, assuming we have certain knowledge about the representation theory of $V$. In Section~\ref{subsect:explicit harmonic analysis} we then perform the construction explicitly for $X = I_2$, $V = S^2$, and $\Gamma = O(3)$. This explicit construction is later used to compute $E_2^*$.

\subsection{Symmetry adapted systems and zonal matrices}
\label{subsecSymmetry adapted systems and zonal matrices}

In this section we show how to define the inner approximating cones by using symmetry adapted systems and zonal matrices. Since we  use representation theory, it is convenient to work over the complex numbers and first consider the cone $\Ccal(X \times X; \C)_{\succeq 0}^\Gamma$ of \emph{Hermitian}, $\Gamma$-invariant, positive definite kernels. Here a Hermitian kernel $K \in \Ccal(X \times X; \C)$ is said to be \emph{positive definite} if 
\[
\sum_{i,j=1}^n c_i \overline{c_j} K(x_i,x_j) \geq 0 \quad \text{for all}\quad n \in \N, x \in X^n, c \in \C^n.
\]

We start by giving the definition of a symmetry adapted system for $X$. Let $\mu$ be a Radon measure on $X$ that is strictly positive and $\Gamma$-invariant. By \emph{strictly positive} we mean that $\mu(U) > 0$ for all open sets $U$ in $X$, and by $\Gamma$-invariant we mean $\mu(\gamma U) = \mu(U)$ for all $\gamma \in \Gamma$ and all Borel sets $U$ in $X$. Such a measure always exists (see Lemma~\ref{lem:existence of strictly positive measure}). We define an \emph{orthonormal system} of $X$ to be a set consisting of continuous, complex-valued functions on $X$ that are orthonormal with respect to the $L^2(X, \mu; \C)$ inner product
\[
\langle f, g \rangle = \int_X f(x) \overline{g(x)} \, d\mu(x).
\]
Such a system is said to be \emph{complete} if its span is uniformly dense in the space $\Ccal(X; \C)$ of continuous complex-valued functions on $X$.

To define what it means for such a system to be symmetry adapted we need some representation theory. A \emph{unitary representation} of $\Gamma$ is a continuous group homomorphism from $\Gamma$ to the group $U(\Hcal)$ of unitary operators on a nontrivial Hilbert space $\Hcal$, where $U(\Hcal)$ is equipped with the weak (or strong, they are the same here) operator topology. Such a representation is said to be \emph{irreducible} when $\Hcal$ does not admit a nontrivial closed invariant subspace. Two unitary representations $\pi_1 \colon \Gamma \to U(\Hcal_1)$ and $\pi_2 \colon \Gamma \to U(\Hcal_2)$ are \emph{equivalent} if there exists a unitary operator $T \colon \Hcal_1 \to \Hcal_2$ that is $\Gamma$-equivariant; that is, $T\pi_1(\gamma)u = \pi_2(\gamma)Tu$ for all $\gamma \in \Gamma$ and $u \in \Hcal_1$.  Let $\smash{\hat\Gamma}$ be a complete set of inequivalent irreducible unitary representations of $\Gamma$, and denote the dimension of a representation $\pi \in \smash{\hat \Gamma}$ by $d_\pi$. A particularly important example of a unitary representation is given by
\[
L \colon \Gamma \to U(L^2(X, \mu; \C)), \, L(\gamma)f(x) = f(\gamma^{-1} x).
\]

A complete orthonormal system of $X$ is said to be a \emph{symmetry adapted system} of $X$ if there exist numbers $0 \leq m_\pi \leq \infty$ for which we can write the system as 
\[
\Big\{e_{\pi,i,j} : \pi \in\hat\Gamma, \, i \in [m_\pi], \, j \in [d_\pi]\Big\},
\]
with $\Hcal_{\pi,i} = \mathrm{span}\{e_{\pi,i,1}, \ldots, e_{\pi,i,d_\pi}\}$ equivalent to $\pi$ as a unitary subrepresentation of $L$, and where there exist $\Gamma$-equivariant unitary operators $T_{\pi,i,i'} \colon \Hcal_{\pi, i} \to \Hcal_{\pi,i'}$ with $\smash{e_{\pi, i', j} = T_{\pi,i,i'} e_{\pi,i,j}}$ for all $\pi$, $i$, $i'$, and $j$. In Theorem~\ref{thm:symmetry adapted system} we show such a system always exists. The number $m_\pi$ can be shown to be equal to the dimension of the space $\mathrm{Hom}_\Gamma(X, \Hcal_\pi)$ of $\Gamma$-equivariant, continuous functions from $X$ to $\Hcal_\pi$, where $\Hcal_\pi$ is the Hilbert space of the irreducible representation $\pi$. Hence $m_\pi$ does not depend on the choice of symmetry adapted system.

The spaces $H_{\pi, i}$ are pairwise orthogonal irreducible subrepresentations of $L$, each spanned by continuous functions, such that $\sum_{\pi \in \hat\Gamma} \smash{\sum_{i \in [m]}} H_{\pi,i}$ is uniformly dense in $\Ccal(X; \C)$. If, on the other hand, we are given a set of subspaces satisfying the above properties, then we can immediately construct a symmetry adapted system by simply selecting appropriate bases of the subspaces. 

Now we first show how the extreme rays of $\smash{\Ccal(X \times X; \C)_{\succeq 0}^\Gamma}$ suggest a ``simultaneous block diagonalization'' of the kernels in this cone. A nonzero vector $x$ in a cone $K$ lies on an \emph{extreme ray} if $x_1,x_2 \in \R_{\geq 0}x$ for all $x_1,x_2\in K$ with $x = x_1+x_2$. In Theorem~\ref{kernels:theorem:extreme rays} we show that a kernel $K \in \smash{\Ccal(X \times X; \C)_{\succeq 0}^\Gamma}$ lies on an extreme ray if and only if there exists an irreducible unitary representation $\pi \colon \Gamma \to U(\Hcal_\pi)$ and a function $\varphi \in \mathrm{Hom}_\Gamma(X, \Hcal_\pi)$ such that
\[
K(x,y) = \langle \varphi(x), \varphi(y) \rangle \quad \text{for all} \quad x,y \in X. 
\]
In the case where $m_\pi < \infty$, we have a finite basis $\varphi_1,\ldots,\varphi_{m_\pi}$ of $\mathrm{Hom}_\Gamma(X, \Hcal_\pi)$, and the map 
\[
A \mapsto \sum_{i,j=1}^{m_\pi} A_{i,j} \langle \varphi_i(\cdot), \varphi_j(\cdot) \rangle
\]
is an isomorphism from the cone of $m_\pi \times m_\pi$ Hermitian positive semidefinite matrices to the convex hull of the extreme rays of $\smash{\Ccal(X \times X; \C)_{\succeq 0}^\Gamma}$ corresponding to $\pi$. Moreover, if $\hat\Gamma$ is finite and $m_\pi$ is finite for all $\pi \in \smash{\hat\Gamma}$, this gives an isomorphism between a finite product of Hermitian positive semidefinite matrix cones and $\smash{\Ccal(X \times X; \C)_{\succeq 0}^\Gamma}$. For our applications, however, we are particularly interested in the situation where these numbers are not finite, and hence we need to consider convergence.

Given a symmetry adapted system $\{ e_{\pi,i,j} \}$ of $X$, we define the matrices
\[
E_\pi(x)_{i,j} = e_{\pi,i,j}(x) \quad \text{for} \quad \pi \in \hat\Gamma, x\in X, i \in [m_\pi], j \in [d_\pi],
\]
and we use this to define the \emph{zonal matrices}
\[
Z_\pi(x,y) = E_\pi(x) E_\pi(y)^* \quad \text{for}  \quad \pi \in \hat \Gamma \quad \text{and} \quad x,y\in X,
\]
where $E_\pi(y)^*$ is the conjugate transpose of $E_\pi(y)$.
The \emph{Fourier coefficients} of a kernel $K \in \Ccal(X \times X; \C)$ are defined by
\[
\hat K(\pi) = \frac{1}{d_\pi}\int_X\int_X K(x, y) Z_\pi(x, y)^* \, d\mu(x) d\mu(y), \quad \text{for} \quad \pi \in \hat\Gamma,
\]
where the matrices are integrated entrywise. The \emph{inverse Fourier transform} reads
\[
K(x,y) = \sum_{\pi \in \hat\Gamma} \sum_{i,i'=1}^{m_\pi} \hat K(\pi)_{i,i'} Z_\pi(x,y)_{i,i'},
\]
where the series in general converges in $L^2$. The kernel $K$ is positive definite if and only if $\hat K(\pi)$ is positive semidefinite for all $\pi \in \hat\Gamma$ (see Lemma~\ref{lemma:positve definite}). In the special case where the action of $\Gamma$ on $X$ has finitely many orbits and $K$ is positive definite, the above series converges absolutely-uniformly (this is an extension of Bochner's theorem \cite{Bochner1941} to the case of finitely many orbits \cite{Laat2016}). We are interested in the situation of infinitely many orbits, where the above series in general does not converge uniformly.

For each $\pi \in \hat\Gamma$, let $R_{\pi,0} \subseteq R_{\pi,1} \subseteq \ldots$ be finite subsets of $[m_\pi]$ such that $\bigcup_{d=0}^\infty R_{\pi,d} = [m_\pi]$ and such that for each $d$, the set $R_{\pi,d}$ is empty for all but finitely many $\pi$. Let $Z_{\pi,d}$ be the finite principal submatrix of the zonal matrix $Z_\pi$ containing only the rows and columns indexed by elements from $R_{\pi,d}$. Let $C_{\pi,d}$ be the cone of kernels of the form $(x,y) \mapsto \langle A, Z_{\pi,d}(x,y)^* \rangle$, where $A$ ranges over the Hermitian positive semidefinite matrices of appropriate size, and where $\langle A, B \rangle = \mathrm{trace}(AB^*)$ is the trace inner product. Define the $d$th inner approximating cone $C_d$ by the (Minkowski) sum $\sum_{\pi \in \hat\Gamma} C_{\pi,d}$. Then we have 
\[
C_0 \subseteq C_1 \subseteq \ldots \subseteq \Ccal(X \times X; \C)_{\succeq 0}^\Gamma,
\] 
and in Theorem~\ref{thm:energy:uniformly dense union} we show $\bigcup_{d=0}^\infty C_d$ is uniformly dense in $\Ccal(X \times X; \C)_{\succeq 0}^\Gamma$.  

Let 
\[
D_d = \big\{ (K+\overline{K})/2 : K \in C_d \big\} \subseteq C_d,
\]
so that $\bigcup_{d=0}^\infty D_d$ is uniformly dense in $\Ccal(X \times X)_{\succeq 0}^\Gamma$. If all irreducible representations of $\Gamma$ are of real type; that is, if each representation $\pi \in \smash{\hat\Gamma}$ is unitarily equivalent to a representation $\Gamma \to O(d_\pi) \subseteq U(d_\pi)$, then there exists a symmetry adapted system of $X$ consisting of real-valued functions. This means we can choose the symmetry adapted system in such a way that the zonal matrices are real-valued, and
\[
D_d = \Big\{ \sum_{\pi \in \hat\Gamma} \Big\langle F_\pi, Z_{\pi,d}(\cdot, \cdot)^* \Big\rangle: F_\pi \in S_{\succeq 0}^{R_{\pi,d}} \text{ for } \pi \in \hat\Gamma\Big\}.
\] 
If $\smash{\hat \Gamma}$ also contains representations of complex or quaternionic type (these are the two remaining possibilities), then one should construct a \emph{real} symmetry adapted system, where $\pi$ ranges over the real irreducible representations. See \cite{GatermannParrilo2004} or \cite{Serre1977} where this is discussed for finite groups. The irreducible representations of the groups considered in this paper are all of real type.

\subsection{Harmonic analysis on subset spaces}
\label{sect:harmonci analysis on subset spaces}

In this section we show how to construct the sequence $\{D_d\}$ as defined above for the special case where $X = I_t$. We give a construction in two steps: The main step is that we show how to construct a symmetry adapted system for $X_t$, where $X_t$ is a structurally simpler space that contains $I_t$ as an embedding. This yields a sequence of inner approximating cones of $\smash{\Ccal(X_t \times X_t)_{\succeq 0}^\Gamma}$. Then we restrict the domains of the kernels in these inner approximations to the smaller space $I_t \times I_t$ to obtain inner approximations of $\smash{\Ccal(I_t \times I_t)_{\succeq 0}^\Gamma}$.

Let
\[
X_t = \bigcup_{i=0}^t V^i / S_i, 
\]
where $S_i$ is the symmetric group on $i$ elements. The set $X_t$ obtains a topology by using the topology of $V$ and the product, quotient, and disjoint union topologies. We define a continuous action of  $\Gamma$ on $X_t$ by 
\[
\gamma \big\{ (x_{\sigma(1)},\ldots,x_{\sigma(i)}) : \sigma \in S_i\big\} = \big\{ ( \gamma x_{\sigma(1)},\ldots, \gamma x_{\sigma(i)}) : \sigma \in S_i\big\}.
\]
The space $I_t$ embeds as a closed, $\Gamma$-invariant subspace into $X_t$ by the map that sends $\{x_1,\ldots,x_i\} \in I_{=i}$ to $\{ (x_{\sigma(1)},\ldots,x_{\sigma(i)}) : \sigma \in S_i\}$. Notice that we could also embed $I_t$ in $V^t / S_t \cup \{e\}$ (where the empty set maps to an additional point $e$), but for computational reasons it is preferable to embed $I_t$ into $X_t$.

We first show that each kernel in the cone $\smash{\Ccal(I_t \times I_t)_{\succeq 0}^\Gamma}$ is the restriction to $I_t \times I_t$ of a kernel from $\Ccal(X_t \times X_t)_{\succeq 0}^\Gamma$, which shows that if a sequence of inner approximations of $\smash{\Ccal(X_t \times X_t)_{\succeq 0}^\Gamma}$ has dense union, then the corresponding sequence of inner approximations of $\smash{\Ccal(I_t \times I_t)_{\succeq 0}^\Gamma}$ also has dense union.

\begin{lemma}
\label{lem:energy:restircted kernels}
Let $(X,d)$ be a compact metric space with a continuous action of a compact group $\Gamma$, and let $Y$ be a closed $\Gamma$-invariant subspace of $X$. Each kernel $K \in \smash{\Ccal(Y \times Y)_{\succeq 0}^\Gamma}$ is the restriction to $Y \times Y$ of a kernel in $\smash{\Ccal(X \times X)_{\succeq 0}^\Gamma}$.
\end{lemma}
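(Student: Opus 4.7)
The plan is to produce a $\Gamma$-invariant positive definite extension of $K$ by passing to the reproducing kernel Hilbert space (RKHS) of $K$, using a continuous Hilbert-space-valued extension of the associated feature map, and then averaging over $\Gamma$.

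First I would recall that the RKHS $H_K \subseteq \mathcal{C}(Y)$ provides a continuous feature map $\Phi \colon Y \to H_K$, $\Phi(y) = K(y,\cdot)$, satisfying $K(y_1,y_2) = \langle \Phi(y_1), \Phi(y_2)\rangle_{H_K}$. Continuity of $\Phi$ follows from the identity $\|\Phi(y_n) - \Phi(y)\|^2 = K(y_n,y_n) - 2K(y,y_n) + K(y,y)$ together with the continuity of $K$ on $Y \times Y$.

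Next, since $X$ is a (compact) metric space and $Y \subseteq X$ is closed, I would invoke Dugundji's extension theorem, which asserts that any continuous map from $Y$ into a locally convex topological vector space admits a continuous extension to $X$. Applied to $\Phi \colon Y \to H_K$, this yields a continuous extension $\tilde\Phi \colon X \to H_K$. The associated Gram kernel
\[
\tilde K(x_1,x_2) = \langle \tilde\Phi(x_1), \tilde\Phi(x_2)\rangle_{H_K}
\]
is then continuous on $X \times X$, positive definite (as a Gram kernel of a Hilbert-space-valued map), and agrees with $K$ on $Y \times Y$ by construction.

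Finally, I would symmetrize $\tilde K$ by averaging over the normalized Haar measure on $\Gamma$, defining
\[
\bar K(x_1,x_2) = \int_\Gamma \tilde K(\gamma x_1, \gamma x_2)\, d\gamma.
\]
The kernel $\bar K$ is continuous (by integration of a jointly continuous function against Haar measure on a compact group), $\Gamma$-invariant by construction, and positive definite because averages of positive definite kernels are positive definite. Since $Y$ is $\Gamma$-invariant and $K$ is $\Gamma$-invariant on $Y$, we have $\tilde K(\gamma y_1, \gamma y_2) = K(\gamma y_1, \gamma y_2) = K(y_1,y_2)$ for every $\gamma \in \Gamma$ and $y_1,y_2 \in Y$, so $\bar K$ restricts to $K$ on $Y \times Y$. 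The main obstacle is obtaining the continuous Hilbert-space-valued extension of $\Phi$; once Dugundji's theorem is applied, the remaining steps reduce to direct verification of continuity, positive definiteness, invariance, and the restriction property.
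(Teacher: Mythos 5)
Your proof is correct, and it takes a genuinely different route from the paper's. The paper uses Mercer's theorem to write $K = \sum_i e_i \otimes \overline{e_i}$ with the series converging absolutely-uniformly, extends each $e_i$ to $X$ via the Tietze extension theorem, and then must labor to prevent the extensions from spoiling uniform convergence: it introduces an auxiliary function $\iota \in \Ccal(X)$ equal to $1$ exactly on $Y$, multiplies each Tietze extension $f_i$ by a large power $\iota^{k_i}$ to dampen it, proves a lower semicontinuity estimate for the pointwise minimum $c_i(x) = \min_{y \in Y_x}|e_i(y)|^2$, and invokes the axiom of choice to compare the truncated sums to $\sum|e_i|^2$. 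Your argument replaces this entire machinery with a single stronger extension theorem: after packaging $K$ as the Gram kernel of the continuous RKHS feature map $\Phi\colon Y \to H_K$ (continuity of $\Phi$ follows exactly from the inequality $\|\varphi(y)-\varphi(x)\|^2 \leq K(x,x)+K(y,y)-K(x,y)-K(y,x)$ that the paper also uses in its Theorem~\ref{kernels:theorem:extreme rays}), Dugundji's theorem extends $\Phi$ continuously to all of $X$ in one stroke, and the Gram kernel of the extension is automatically continuous and positive definite. The final $\Gamma$-averaging step is the same in both proofs, and the verification that the restriction to $Y \times Y$ is unchanged uses the $\Gamma$-invariance of $Y$ and of $K$ exactly as you say. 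What the paper's route buys is that it is self-contained modulo Mercer and Tietze, both very standard; what your route buys is brevity and conceptual transparency, at the cost of appealing to Dugundji's theorem, which is less elementary but certainly well established for maps from a closed subset of a metric space into a locally convex space (and a Hilbert space qualifies). Both are valid.
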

\begin{proof}
Let $K \in \Ccal(Y \times Y)_{\succeq 0}^\Gamma$. By Mercer's theorem there exists a sequence of functions $\{e_i\}$ in $\Ccal(Y)$ such that $\smash{\sum_{i=1}^\infty e_i \otimes \overline{e_i}}$ converges absolutely and uniformly to $K$. In particular this means $\sum_{i=1}^\infty |e_i|^2$ converges uniformly.

Given a point $x \in X$, define
\[
Y_x = \big\{ y \in Y : d(y,x) \leq d(z,x) \text{ for all } z \in X\big\}.
\]
For each $i$ we define the function $c_i \colon X \to \R$ by 
\[
c_i(x) = \min_{y \in Y_x} |e_i(y)|^2 \quad \text{for} \quad x \in X.
\]
We first show $c_i$ is lower semicontinuous. For $\kappa > 0$, let 
\[
Y_{x,\kappa} = Y \setminus \bigcup_{y \in Y_x} B_\kappa(y)^\circ,
\]
where $B_\kappa(y)^\circ$ is the open ball of radius $\kappa$ about $y$. The set $Y_{x,\kappa}$ is closed, so there exist a $\delta > 0$ such that $Y_z \cap Y_{x,\kappa} = \emptyset$ for all $z \in B_\delta(x)$. This means that for every $\varepsilon > 0$, there is a $\delta > 0$ such that $c_i(z) \geq c_i(x) - \varepsilon$ for all $z \in B_\delta(x)$.

Since $X$ is a compact metric space, it is perfectly normal, which implies the existence of a function $\iota \in \Ccal(X)$ such that $\iota|_Y = 1$ and $\iota|_{X \setminus Y} < 1$. By Tietze' extension theorem there exist functions $f_i \in \Ccal(X; K)$ with $f_i|_Y = e_i$. Let
\[
Q_i = \{x \in X : |f_i(x)|^2 \geq c_i(x) + 2^{-i} \}.
\]
The set $Q_i$ is disjoint from $Y$, and it follows from $c_i$ being lower semicontinuous that $Q_i$ is closed and hence compact. So, $M_i = \max_{x \in Q_i} \iota(x)$ exists and is strictly smaller than $1$. Let 
\[
B_i = \max_{x \in Q_i} \frac{|f_i(x)|}{\sqrt{c_i(x) + 1/2^i}},
\]
and let $k_i$ be an integer such that $M_i^{k_i} B_i \leq 1$. It follows that
\[
|g_i|^2 \leq c_i + \frac{1}{2^i}, \quad \text{where} \quad g_i = \iota^{k_i} f_i.
\]

Let $\varepsilon > 0$. Choose $N_1 \in \N$ such that $\sum_{i=N_1}^\infty 1/2^i \leq \varepsilon/2$. Choose $N_2 \in \N$ such that 
\[
\left\|\sum_{i = m}^n |e_i|^2 \right\|_\infty \leq \frac{\varepsilon}{2}
\]
for all $n \geq m \geq N_2$. This is possible because  $\smash{\sum_{i=1}^\infty |e_i|^2}$ converges uniformly and hence Cauchy uniformly. Let $N = \max\{N_1, N_2\}$. Then,
\[
\left\|\sum_{i = m}^n g_i \otimes \overline{g_i}\right\|_\infty \leq \left\|\sum_{i = m}^n |g_i|^2\right\|_\infty \leq \left\|\sum_{i = m}^n c_i \right\|_\infty + \sum_{i = m}^n 1/2^i.
\]
We have 
\[
\left\|\sum_{i = m}^n c_i\right\|_\infty = \sup_{x \in X}\sum_{i = m}^n \min_{y \in Y_x} |e_i(y)|^2.
\]
We use the axiom of choice to select an element $y_x \in Y_x$ for each $x \in X$. Then,
\[
\sup_{x \in X}\sum_{i = m}^n \min_{y \in Y_x} |e_i(y)|^2 \leq \sup_{x \in X}\sum_{i = m}^n |e_i(y_x)|^2 = \sup_{x \in Y}\sum_{i = m}^n |e_i(x)|^2 = \left\|\sum_{i = m}^n |e_i|^2 \right\|_\infty \leq \frac{\varepsilon}{2}.
\]
and $\sum_{i = m}^n 1/2^i \leq \varepsilon/2$, so $\sum_{i = m}^n g_i \otimes \overline{g_i}$ converges uniformly Cauchy and hence uniformly. Let $P$ be the limit function.

Define $K' \in \smash{\Ccal(X \times X)_{\succeq 0}^\Gamma}$ by $K'(x,y) = \int P(\gamma x, \gamma y) \, d\gamma$, where we integrate over the normalized Haar measure of $\Gamma$. Since $P|_{Y \times Y} = K$ is $\Gamma$-invariant, the restriction of $K'$ to $Y \times Y$ equals $K$, which completes the proof.
\end{proof}

To give an explicit construction of a symmetry adapted system for $X_t$ we use symmetric tensor powers. Given a vector space $\Vcal$, denote the $n$th tensor power of $\Vcal$ by $\Vcal^{\otimes n}$; that is, $\Vcal^{\otimes n} = \Vcal \otimes \cdots \otimes \Vcal$ ($n$ times). Given $v_1,\ldots,v_n \in \Vcal$ and $\sigma \in S_n$, let
\[
(\otimes_{i=1}^n v_i)^\sigma = \otimes_{i=1}^n v_{\sigma(i)},
\]
and extend this operation to $\Vcal^{\otimes n}$ by linearity. Define the $n$th \emph{symmetric tensor power} of $\Vcal$ as
\[
\Vcal^{\odot n} = \Big\{ \sum_{\sigma \in S_n} w^\sigma : w \in \Vcal^{\otimes n}\Big\}.
\]
We have $w^\sigma = w$ for all $w \in \Vcal^{\odot n}$ and $\sigma \in S_n$, and $\Vcal^{\odot n} = \mathrm{span} \{ v^{\otimes n} : v \in \Vcal\}$ \cite{ComonGolubLimMourrain2008}. If $\Hcal_1$ and $\Hcal_2$ are Hilbert spaces, then we equip the tensor product $\Hcal_1 \otimes \Hcal_2$ with the inner product $\langle u_1 \otimes u_2, v_1 \otimes v_2 \rangle = \langle u_1, v_1 \rangle \langle u_2, v_2 \rangle$, where we extend linearly in the first and antilinearly in the second component. We denote the Hilbert space obtained by taking the completion in the metric given by this inner product by $\Hcal_1 \mathbin{\hat\otimes} \Hcal_2$.  A symmetric tensor power $\smash{\Hcal^{\odot n}}$ of a Hilbert space $\Hcal$ gets a metric as a subspace of $\smash{\Hcal^{\hat \otimes n}}$, and we denote the completion in this metric by $\Hcal^{\hat\odot n}$. We have 
\[
\Hcal^{\hat\odot n} = \mathrm{closure}(\mathrm{span}(\{ v^{\otimes n} : v \in \Hcal\})),
\]
where the closure is in $\Hcal^{\hat\otimes n}$.
The \emph{(inner) tensor product representation} 
\[
\pi_1 \otimes \pi_2 \colon \Gamma \to U(\Hcal_1 \mathbin{\hat\otimes} \Hcal_2)
\]
of two unitary representations $\pi_1 \colon \Gamma \to U(\Hcal_1)$ and $\pi_2 \colon \Gamma \to U(\Hcal_2)$ is defined by
\[
(\pi_1 \otimes \pi_2)(\gamma) (v_1 \otimes v_2) = (\pi(\gamma) v_1) \otimes (\pi(\gamma) v_2),
\]
and this definition extends to finite products and finite (symmetric) powers.

Let $\mu$ be a strictly positive $\Gamma$-invariant Radon measure on $X$. This defines a strictly positive, $\Gamma$-invariant Radon measure $\nu$ on $X_t$ by
\[  
\nu(f) = \sum_{i=0}^t \idotsint_V f(\{ (x_{\sigma(1)},\ldots,x_{\sigma(i)}) : \sigma \in S_i\big\}) \, d\mu(x_1) \cdots d\mu(x_i).
\]
For each $0 \leq i \leq t$, we define the operator 
\[
L_i \colon L^2(V, \mu; \C)^{\hat \odot i} \to L^2(X_t, \nu; \C)
\]
by setting
\[
L_i(f^{\otimes i})(\{ (x_{\sigma(1)},\ldots,x_{\sigma(i)}) : \sigma \in S_i\}) = \prod_{k=1}^i f(x_k)
\]
for $f \in L^2(V, \mu; \C)$ and extending by linearity and continuity. These are isometric, $\Gamma$-equivariant operators with pairwise orthogonal images, such that
\[
\bigoplus_{i=0}^t L_i(L^2(V, \mu; \C)^{\hat \odot i}) = L^2(X_t, \nu; \C),
\]
and
\[
\sum_{i=0}^t L_i(\Ccal(V; \C)^{\odot i})
\]
is uniformly dense in $\Ccal(X_t; \C)$. 

We assume we have a symmetry adapted system of $V$. As discussed in the previous section, such a system defines a sequence $\smash{\{\Hcal_k\}_{k=1}^m}$ (where $1 \leq m \leq \infty$) of pairwise orthogonal, $\Gamma$-irreducible (and hence finite dimensional) subrepresentations of $L^2(V, \mu; \C)$, such that the algebraic sum $\sum_{k=1}^m \Hcal_k$ is uniformly dense in $\Ccal(V; \C)$.

The spaces 
\[
\bigotimes_{k=1}^m \Hcal_k^{\odot \tau_k}, \quad \text{for} \quad \tau \in D_i = \Big\{ \tau \in \N_0^m : \sum_{k=1}^m \tau_k = i\Big\} \quad \text{and} \quad 0 \leq i \leq t,
\]
are pairwise orthogonal, $\Gamma$-invariant subspaces of $L^2(V,\mu; \C)^{\otimes i}$. Notice that the above tensor products are finite even if $m = \infty$, since $\tau_k$ is nonzero for at most finitely many $k$. For each $0 \leq i \leq t$, we will define (see below) a $\Gamma$-equivariant, unitary operator
\[
T_i \colon \underset{\tau \in D_i}{\widehat\bigoplus} \; \bigotimes_{k=1}^m \Hcal_k^{\odot\tau_k} \to L^2(V,\mu; \C)^{\hat\odot i},
\]
so that
\[
\sum_{\tau \in D_i} T_i\Big(\bigotimes_{k=1}^m \Hcal_k^{\odot \tau_k}\Big)
\]
is uniformly dense in $\Ccal(V; \C)^{\odot i}$.

The finite dimensional spaces $\otimes_{k=1}^m \Hcal_k^{\odot \tau_k}$, for $\tau \in D_i$ and $0 \leq i \leq t$, decompose into $\Gamma$-irreducible representations; that is, there exist $\Gamma$-equivariant, unitary operators
\[
M_\tau \colon \underset{\pi \in R_\tau}{\bigoplus} \Hcal_\pi \to \bigotimes_{k=1}^m \Hcal_k^{\odot \tau_k},
\]
where $\Hcal_\pi$ is the Hilbert space of the irreducible representation $\pi \in R_\tau$, and where $R_\tau$ is some finite subset of $\smash{\hat\Gamma}$. In Section~\ref{subsect:explicit harmonic analysis} we construct the sets $R_\tau$ and the operators $M_\tau$ explicitly for the case where $V = S^2$, $\Gamma = O(3)$, and $t = 2$.

By composing the operators defined above we can define a symmetry adapted system of $X_t$. For each $\pi \in \hat\Gamma$, let $\{e_{\pi,1},\ldots,e_{\pi,d_\pi}\}$ be an orthonormal basis of $\Hcal_\pi$. Then,
\begin{equation}
\label{eq:symmetry adapted system}
\Big\{L_i(T_i(M_\tau(e_{\pi,j}))) : 0 \leq i \leq t, \, \tau \in D_i, \, \pi \in R_\tau, \, j \in [d_\pi] \Big\}
\end{equation}
is a symmetry adapted system of $X_t$.

In the remainder of this section we give the precise definition of $T_i$ and show it is a well-defined, $\Gamma$-equivariant, unitary operator. This generalizes a result from \cite{AnsemilFloret1998} from finite to infinite direct sums and from vector spaces to representations. Given $\tau \in D_i$, let $A_\tau$ be the subgroup consisting of all $\sigma \in S_i$ for which the set
\[
\Big\{\sum_{k=1}^{j-1} \tau_k+1,\sum_{k=1}^{j-1} \tau_k+2, \ldots, \sum_{k=1}^{j} \tau_k\Big\}
\]
is invariant under the permutation $\sigma$ on $[i]$ for each $j \in [m]$. Let $B_\tau$ be the left coset space of $S_i$ modulo $A_\tau$. Given $\tau \in D_i$, $\smash{w \in \otimes_{k=1}^m \Hcal_k^{\odot \tau_k}}$, and $[\sigma] \in B_\tau$, the operation $w \mapsto w^\sigma$ is well-defined, and
\[
\frac{1}{|B_\tau|} \sum_{[\sigma] \in B_\tau} w^\sigma \in L^2(V, \mu; \C)^{\odot i}.
\]
Moreover, if we fix an element $w_\tau \in \smash{\otimes_{k=1}^m \Hcal_k^{\odot \tau_k}}$ for every $\tau \in D_i$, then $w_\tau^\sigma$ and $w_{\tau'}^{\sigma'}$ are orthogonal whenever $\tau \neq \tau'$ or $\sigma \neq \sigma'$. Hence, we can define $T_i$ by setting
\[
T_i(w) = \frac{1}{|B_\tau|} \sum_{[\sigma] \in B_\tau} w^\sigma, \quad \text{for} \quad w \in \bigotimes_{k=1}^m \Hcal_k^{\odot \tau_k} \quad \text{and} \quad \tau \in D_i,
\]
and extending by linearity and continuity.

\begin{lemma}
For each $0 \leq i \leq t$, $T_i$ is a $\Gamma$-equivariant, unitary operator, and 
\[
\sum_{\tau \in D_i} T_i\Big(\bigotimes_{k=1}^m \Hcal_k^{\odot \tau_k}\Big)
\]
is uniformly dense in $\Ccal(V; \C)^{\odot i}$.
\end{lemma}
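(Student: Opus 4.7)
The plan is to verify the three claimed properties of $T_i$ one at a time, in each case reducing to the orthogonality and block-symmetry observations the paper has already assembled in the paragraph preceding the lemma.

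\textbf{Well-definedness, target, and equivariance.} First I would check that the formula defining $T_i(w)$ is independent of the coset representatives chosen and that the image indeed lies in $L^2(V,\mu;\C)^{\hat\odot i}$. For $w \in \bigotimes_{k=1}^m \Hcal_k^{\odot\tau_k}$ and $\rho \in A_\tau$, the permutation $\rho$ moves indices only within blocks whose tensor factor is already a symmetric power, so $w^\rho = w$, hence $w^{\sigma\rho} = w^\sigma$ depends only on $[\sigma]\in B_\tau$. Then for any $\rho \in S_i$, right-multiplication by $\rho$ merely permutes the full coset space $B_\tau$, so $(T_i w)^\rho = T_i w$, placing the image in the symmetric subspace. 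The $\Gamma$-equivariance is then essentially automatic: the diagonal $\Gamma$-action on $L^2(V,\mu;\C)^{\otimes i}$ commutes with every element of $S_i$ and leaves each $\Hcal_k$ invariant, so it intertwines with the symmetrization that defines $T_i$.

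\textbf{Unitarity.} For the isometry part I would invoke the orthogonality already asserted in the paper: for $w_\tau \in \bigotimes_k \Hcal_k^{\odot\tau_k}$ and $w_{\tau'} \in \bigotimes_k \Hcal_k^{\odot\tau'_k}$ the tensors $w_\tau^\sigma$ and $w_{\tau'}^{\sigma'}$ are orthogonal in $L^2(V,\mu;\C)^{\otimes i}$ whenever $\tau\neq\tau'$ or $[\sigma]\neq[\sigma']$. This immediately makes the subspaces $T_i\bigl(\bigotimes_k\Hcal_k^{\odot\tau_k}\bigr)$ pairwise orthogonal across $\tau$, so it suffices to verify isometry on each single summand. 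Within a fixed $\tau$ the same orthogonality kills every cross-term in $\langle T_iw,T_iw'\rangle$, leaving a diagonal sum of $|B_\tau|$ identical inner products that the $1/|B_\tau|$ prefactor rescales back to the inner product inherited on $\bigotimes_k \Hcal_k^{\odot\tau_k}$. Surjectivity onto $L^2(V,\mu;\C)^{\hat\odot i}$ then follows by decomposing any symmetric tensor according to its multi-degree $\tau$ and recognizing each $\tau$-component as $T_i w$ for a unique $w$ in the corresponding summand, after which one passes to the Hilbert-space completion.

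\textbf{Uniform density.} The completeness of the symmetry adapted system for $V$ makes the algebraic sum $\bigoplus_k \Hcal_k$ uniformly dense in $\Ccal(V;\C)$. Since the sup norm on $V^i$ is multiplicative on elementary tensors, this upgrades to uniform density of $(\bigoplus_k \Hcal_k)^{\otimes i}$ in $\Ccal(V;\C)^{\otimes i}$, and the full symmetrization $\frac{1}{i!}\sum_{\sigma\in S_i}(\cdot)^\sigma$ is a uniform contraction, so $(\bigoplus_k\Hcal_k)^{\odot i}$ is uniformly dense in $\Ccal(V;\C)^{\odot i}$. It then remains to identify $\sum_\tau T_i\bigl(\bigotimes_k \Hcal_k^{\odot\tau_k}\bigr)$ with $(\bigoplus_k\Hcal_k)^{\odot i}$, which I would do by splitting an element of the latter into its pure multi-degree components and writing each one in the form $T_iw$.

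The main obstacle will be the bookkeeping in the isometry step: matching the $1/|B_\tau|$ normalization in the definition of $T_i$ against the precise inner-product convention the paper places on the block-mixed tensor products $\bigotimes_k \Hcal_k^{\odot\tau_k}$, whose elements are symmetric only within, and not across, the blocks. Everything else is a direct consequence of the orthogonality and coset-counting facts already at hand.
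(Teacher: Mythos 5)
Your proposal follows the same blueprint the paper uses: isometry via the coset orthogonality, equivariance via the fact that the diagonal $\Gamma$-action commutes with the $S_i$-permutations, and density via the explicit preimage of $\bigl(\sum_k v_k\bigr)^{\otimes i}$. What you add — the well-definedness check ($w^\rho = w$ for $\rho \in A_\tau$, so the sum over $B_\tau$ is meaningful, and $(T_i w)^\rho = T_i w$, so the image is symmetric) — is sensible but sits in the paragraph before the lemma in the paper rather than inside the proof, so the two are not materially different in structure.

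The one place where you should not simply defer to the paper is the very bookkeeping concern you flag at the end, because it is well founded. With $T_i(w) = \frac{1}{|B_\tau|}\sum_{[\sigma]\in B_\tau} w^\sigma$ and the pairwise orthogonality of $w^\sigma$ across cosets, the correct computation is
\[
\|T_i w\|^2 = \frac{1}{|B_\tau|^2}\sum_{[\sigma]\in B_\tau}\|w^\sigma\|^2 = \frac{\|w\|^2}{|B_\tau|},
\]
not $\|w\|^2$; the paper's displayed chain replaces $\bigl\|\sum_{[\sigma]} w^\sigma\bigr\|$ by $\sum_{[\sigma]}\|w^\sigma\|$, which is not valid. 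To get a genuine isometry the prefactor must be $1/\sqrt{|B_\tau|}$ (equivalently, one can drop the prefactor and rescale afterwards). This changes the preimage of $\bigl(\sum_k v_k\bigr)^{\otimes i}$ by harmless $\sqrt{|B_\tau|}$ factors on each $\tau$-component and does not affect the density claim or the equivariance claim, since $B_\tau$ depends only on $\tau$ and the $\Gamma$-action preserves each $\tau$-block. So when you carry out the step you called "the main obstacle," be prepared to adjust the normalization rather than reconcile the paper's as written — once that is done your argument closes, and it is the paper's intended argument.
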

\begin{proof}
Given $w \in \otimes_{k=1}^m \Hcal_k^{\odot \tau_k}$, we have
\[
\|T_i(w)\| = \left\|\frac{1}{|B_d|} \sum_{[\sigma] \in B_\tau} w^\sigma\right\| = \frac{1}{|B_\tau|} \sum_{[\sigma] \in B_\tau} \|w^\sigma\| = \|w\|,
\]
so $T_i$ is a linear isometry.

The span of the elements of the form $(\sum_{k = 1}^m v_k)^{\otimes i}$, where $v_k \in \Hcal_k$ for  $k \in [m]$ and $v_k = 0$ for all but finitely many $k$, is uniformly dense in $\Ccal(V; \C)^{\odot i}$. Such an element has a preimage under $T_i$:
\begin{align*}
\Big(\sum_{k = 1}^m v_k\Big)^{\otimes i}
&= \sum_{k_1,\ldots,k_i = 1}^m\, \bigotimes_{j=1}^i v_{k_j} = \sum_{\{k_1,\ldots,k_i\} \subseteq [m]}\, \sum_{\sigma \in S_i} \, \bigotimes_{j=1}^i v_{k_{\sigma(j)}}\\
&= \sum_{\tau \in D_i} \, \sum_{[\sigma] \in B_\tau} \Big(\bigotimes_{k=1}^m v_k^{\otimes \tau_k}\Big)^\sigma = T_i\Big(\sum_{\tau \in D_i} \bigotimes_{k=1}^m v_k^{\otimes \tau_k}\Big).
\end{align*}
So, 
\[
\sum_{\tau \in D_i} T_i\Big(\bigotimes_{k=1}^m \Hcal_k^{\odot \tau_k}\Big)
\]
is uniformly dense in $\Ccal(V; \C)^{\odot i}$. This means that $T_i$ is an isometry whose image is dense in $L^2(V, \mu; \C)^{\odot i}$, and $T_i$ therefore is a unitary operator.

Since the spaces $\Hcal_k$ are $\Gamma$-invariant, the spaces $\otimes_{k=1}^m \Hcal_k^{\odot \tau_k}$, for $\tau \in D_i$, are also $\Gamma$-invariant. So, for $w \in \otimes_{k=1}^m \Hcal_k^{\odot \tau_k}$, with $\tau \in D_i$, we have
\begin{align*}
T_i(\pi^{\otimes i}(\gamma) w) 
&= \frac{1}{|B_\tau|} \sum_{[\sigma] \in B_\tau} (\pi^{\otimes i}(\gamma) w)^\sigma = \frac{1}{|B_\tau|} \sum_{[\sigma] \in B_\tau} \pi^{\otimes i}(\gamma) w^\sigma\\
&= \pi^{\otimes i}(\gamma)  \left(\frac{1}{|B_\tau|} \sum_{[\sigma] \in B_\tau} w^\sigma\right) = \pi^{\otimes i}(\gamma) T_i(w),
\end{align*}
which means that $T_i$ is $\Gamma$-equivariant.
\end{proof}

\subsection{Explicit computations for the sphere}
\label{subsect:explicit harmonic analysis}

In this section we explicitly construct the sequence $\{D_d\}$ of inner approximations of $\smash{\Ccal(I_t \times I_t)_{\succeq 0}^\Gamma}$ (see Section~\ref{subsecSymmetry adapted systems and zonal matrices}), where $V = S^2$, $\Gamma = O(3)$, and $t = 2$. As explained in Section~\ref{sect:harmonci analysis on subset spaces}, for this we need to construct a symmetry adapted system for
\[
X_2 = \bigcup_{i=0}^2 V^i / S_i.
\]

Let $\mathcal H_\ell \subseteq \Ccal(S^2; \C)$ be the space of spherical harmonics of degree $\ell$. A \emph{spherical harmonic} is the restriction to $S^2$ of a homogeneous polynomial in $\C[x,y,z]$ that vanishes under the Laplacian $\Delta = \partial^2/\partial x^2 + \partial^2/\partial y^2 + \partial^2/\partial z^2$; see for instance \cite{Gelfand1958}. The space $\mathcal H_\ell$ has dimension $2\ell+1$. Moreover, these spaces are orthogonal and form irreducible subrepresentations of the unitary representation 
\[
L_s \colon SO(3) \to U(L^2(S^2, \mu; \C)), \, L_s(\gamma)f(x) = f(\gamma^{-1} x),
\]
where $\mu$ is the invariant measure on the sphere with normalization $\mu(S^2) = 1$.
The subrepresentations $\Hcal_\ell$ in fact form a complete set of subrepresentations: The algebraic sum of the spaces $\Hcal_\ell$ is uniformly dense in $\Ccal(S^2; \C)$, and $L^2(S^2, \mu; \C)$ decomposes as the Hilbert space direct sum of the spaces $\Hcal_\ell$. Moreover, every irreducible unitary representation $SO(3)$ is equivalent to $\Hcal_\ell$ for some $\ell$.

An explicit orthonormal  basis of $\Hcal_\ell$ is given by the \emph{Laplace spherical harmonics} $\smash{Y_\ell^m}$ for $-\ell \leq m \leq \ell$. These functions are usually defined using spherical coordinates, such as 
$
Y_\ell^m(\vartheta, \varphi) = (-1)^m c_\ell^m P_\ell^m(\cos(\varphi)) e^{i m \vartheta},
$
where we use the Condon--Shortley phase convention, and the spherical parameterization
\[
x = \cos(\vartheta) \sin(\varphi), \quad y = \sin(\vartheta) \sin(\varphi), \quad z = \cos(\varphi).
\]
Here 
\[
c_\ell^m = \sqrt{(2\ell+1)\frac{(\ell-m)!}{(\ell+m)!}}
\]
is a normalization constant, and $P_\ell^m$ is the $\ell$th \emph{associated Legendre polynomial} of order $m$. For $m \geq 0$ the polynomials $P_\ell^m$ and $P_\ell^{-m}$ are defined as
\[
P_\ell^m(z) = (1-z^2)^{m/2} \frac{d^m}{dz^m}\left(P_\ell(z)\right), \quad P_\ell^{-m}(z) = (-1)^m \frac{(\ell-m)!}{(\ell+m)!}P_\ell^m,
\]
where
$
P_\ell(z) = (2^\ell \ell!)^{-1} \frac{d^\ell}{dz^\ell} (z^2 -1)^\ell
$
is the $\ell$th Legendre polynomial.

In cartesian coordinates, $Y_\ell^m$ has the form
\[
(-1)^m c_\ell^m P_l^m\left(\frac{z}{\sqrt{x^2+y^2+z^2}}\right) \left(\frac{x+\mathrm{sign}(m)iy}{\sqrt{x^2+y^2}}\right)^m.
\]
By using the identity $x^2+y^2+z^2 = 1$ and the above definition of the associated Legendre polynomials, for $m \geq 0$ we get
\[
Y_\ell^m(x,y,z) = (-1)^m c_\ell^m (x+iy)^m \frac{d^mP_\ell(z)}{dz^m},\, Y_\ell^{-m}(x,y,z) = c_\ell^m (x-iy)^m \frac{d^mP_\ell(z)}{dz^m}.
\]
From the definition of $P_\ell$ we see that if $\ell$ is even (odd), every term of $P_\ell$ has even (odd) degree, so we can multiply the terms in $Y_\ell^m(x,y,z)$ with appropriate powers of $x^2 + y^2 + z^2$ to make $Y_\ell^m(x,y,z)$ into a homogeneous polynomial of degree $\ell$.

In general, an inner tensor product (see previous section) of irreducible representations is not irreducible. By the above discussion we know that a tensor product $\Hcal_{\ell_1} \otimes \Hcal_{\ell_2}$ must be isomorphic to a direct sum of the spaces $\Hcal_\ell$. Indeed, we have 
\[
\Hcal_{\ell_1} \otimes \Hcal_{\ell_2} \simeq \Hcal_{|\ell_1-\ell_2|} \oplus \cdots \oplus \Hcal_{\ell_1+\ell_2},
\]
where the $SO(3)$-equivariant, unitary operator 
\[
\Phi_{\ell_1,\ell_2} \colon \Hcal_{\ell_1} \otimes \Hcal_{\ell_2} \to \Hcal_{|\ell_1-\ell_2|} \oplus \cdots \oplus \Hcal_{\ell_1+\ell_2}
\]
is rather nontrivial, but can be given explicitly by using the Clebsch--Gordan coefficients \cite{Gelfand1958}. For this we set
\[
\Phi_{\ell_1,\ell_2}(Y_{\ell_1}^{m_1} \otimes Y_{\ell_2}^{m_2}) = \sum_{\ell = |\ell_1-\ell_2|}^{\ell_1+\ell_2} \sum_{m=-\ell}^\ell C_{\ell_1,m_1,\ell_2,m_2}^{\ell, m} Y_\ell^m
\]
and extend by linearity, where the \emph{Clebsch--Gordan coefficients} are given by
 \begin{align*}
 & C_{\ell_1,m_1,\ell_2,m_2}^{\ell, m} = \delta_{m_1+m_2=m} \Big(  \frac{(2\ell+1) (\ell_1+\ell_2-\ell)! (\ell_1-\ell_2+\ell)! (-\ell_1+\ell_2-\ell)!}{(\ell_1+\ell_2+\ell+1)!}\\[-0.4em]
 &\quad\quad\quad  \cdot (\ell_1+m_1)! (\ell_1-m_1)!(\ell_2+m_2)!(\ell_2-m_2)!(\ell+m)!(\ell-m)!\Big)^{1/2} \\[-0.4em]
 &\quad\quad\quad \cdot \sum_{\nu=-\infty}^\infty (-1)^\nu \Big( \nu! (\ell_1+\ell_2-\ell-\nu)!(\ell_1-m_1-\nu)!(\ell_2+m_2-\nu)! \\[-0.8em]
 &\quad \quad\quad\quad\quad\quad \cdot (\ell-\ell_2+m_1+\nu)! (\ell-\ell_1-m_2+\nu)! \Big)^{-1}.
 \end{align*}

Since the Clebsch--Gordan coefficients are real numbers, it follows that
\[
\Phi_{\ell_1,\ell_2}^{-1}(Y_\ell^m) = \sum_{m_1 = -\ell_1}^{\ell_1} \sum_{m_2 = -\ell_2}^{\ell_2} C_{\ell_1,m_1,\ell_2,m_2}^{\ell, m} Y_{\ell_1}^{m_1} \otimes Y_{\ell_2}^{m_2}.
\]
For any set of scalars $\{c_m\}$ we have
\begin{align*}
\Phi_{\ell',\ell'}\Big(\sum_{m_1 = -\ell'}^{\ell'} & c_{m_1} Y_{\ell'}^{m_1} \otimes \sum_{m_1 = -\ell'}^{\ell'} c_{m_1} Y_{\ell_1}^{m_1} \Big)\\
&= \sum_{\ell = 0}^{2\ell'} \sum_{m=-\ell}^\ell \left(\sum_{m_1, m_2 = -\ell'}^{\ell'} c_{m_1} c_{m_2} C_{\ell',m_1,\ell',m_2}^{\ell, m}\right) Y_\ell^m,
\end{align*}
and by using the symmetry relation 
\[
C_{\ell_2,m_2,\ell_1,m_1}^{\ell, m} = (-1)^{\ell_1+\ell_2-\ell} C_{\ell_1,m_1,\ell_2,m_2}^{\ell, m}
\]
we obtain 
\[
\sum_{m_1, m_2 = -\ell'}^{\ell'} c_{m_1} c_{m_2} C_{\ell',m_1,\ell',m_2}^{\ell, m} = 0\]
for all odd numbers $\ell$. This shows that
\[
\Phi_{\ell',\ell'}(\Hcal_{\ell'}^{\odot 2}) \subseteq \Hcal_0 \oplus \Hcal_2 \oplus \cdots \oplus \Hcal_{2\ell'}.
\]
We have 
\begin{align*}
\dim(\Hcal_{\ell'}^{\odot 2}) &= \binom{\dim(\Hcal_{\ell'}) + 1}{2} = \binom{2\ell' + 2}{2} = 2(\ell')^2 + 3\ell' + 1  \\
&= \sum_{k=0}^{\ell'} (4k+1) =  \dim(\Hcal_0 \oplus \Hcal_2 \oplus \cdots \oplus \Hcal_{2\ell'}),
\end{align*}
and therefore
\[
\Hcal_{\ell'}^{\odot 2} \simeq \Hcal_0 \oplus \Hcal_2 \oplus \cdots \oplus \Hcal_{2\ell'}.
\]
Let $\Phi_\ell$ be the isomorphism $\Hcal_\ell^{\odot 2} \to \Hcal_0 \oplus \Hcal_2 \oplus \cdots \oplus \Hcal_{2\ell}$ defined by $\Phi_\ell = \Phi_{\ell,\ell}|_{\Hcal_\ell^{\odot 2}}$.

We have seen how $L^2(S^2, \mu; \C)$ decomposes into $SO(3)$-irreducible representations, and how tensor products and symmetric tensor powers of these irreducibles decompose into irreducibles. In the next section it will be essential that instead of the group $SO(3)$, we consider the full symmetry group $O(3)$ of $S^2$. 
The special orthogonal group $SO(3)$ forms a normal subgroup of $O(3)$. Since $\R^3$ is odd dimensional, the inversion operation $x \mapsto -x$ is not contained in $SO(3)$. This operation, which we denote by $-I$, generates a $2$ element normal subgroup of $O(3)$, and the orthogonal group $O(3)$ is isomorphic to the direct product $\Z_2 \times SO(3)$. Thus, for each irreducible representation $\Hcal_\ell$ of $SO(3)$, we define two nonequivalent irreducible representations $\smash{\pi_\ell^p} \colon O(3) \to U(\Hcal_\ell^p)$, with $p = \pm 1$, where $\smash{\Hcal_\ell^{+1}}$ and $\smash{\Hcal_\ell^{-1}}$ are both isomorphic to $\Hcal_\ell$ as Hilbert spaces, and where $\smash{\pi_\ell^p|_{SO(3)}}$ is equivalent to $\Hcal_\ell$, but where $\pi_\ell^p(-I) f = pf$ for all $f$. It follows that $\pi_\ell^p$ is a subrepresentation of the unitary representation
\[
L \colon O(3) \to U(L^2(S^2, \mu; \C)), \, L(\gamma)f(x) = f(\gamma^{-1} x)
\]
if and only if $p = (-1)^\ell$. For $f_1 \in \Hcal_{\ell_1}^{p_1}$ and $f_2 \in \Hcal_{\ell_2}^{p_2}$ we have
\[
(\pi_{\ell_1}^{p_1} \otimes \pi_{\ell_2}^{p_2})(-I)(f_1 \otimes f_2) = \pi_{\ell_1}^{p_1}(-I)f_1 \otimes \pi_{\ell_2}^{p_2}(-I)f_2 = p_1p_2 (f_1 \otimes f_2),
\]
which implies
\[
\Hcal_{\ell_1}^{p_1} \otimes \Hcal_{\ell_2}^{p_2} \simeq \Hcal_{|\ell_1-\ell_2|}^{p_1p_2} \oplus \cdots \oplus \Hcal_{\ell_1+\ell_2}^{p_1p_2} \quad \text{and} \quad (\Hcal_{\ell'}^p)^{\odot 2} \simeq \Hcal_0^{+1} \oplus \Hcal_2^{+1} \oplus \cdots \oplus \Hcal_{2\ell'}^{+1}.
\]
The operators $\Phi_{\ell_1,\ell_2}$ and $\Phi_\ell$ defined above now become $O(3)$-equivariant, unitary operators 
\[
\Phi_{\ell_1,\ell_2} \colon \Hcal_{\ell_1}^{p_1} \otimes \Hcal_{\ell_2}^{p_2} \to \Hcal_{|\ell_1-\ell_2|}^{p_1p_2} \oplus \cdots \oplus \Hcal_{\ell_1+\ell_2}^{p_1p_2}
\]
and
\[
\Phi_{\ell} \colon (\Hcal_{\ell'}^p)^{\odot 2} \to \Hcal_0^{+1} \oplus \Hcal_2^{+1} \oplus \cdots \oplus \Hcal_{2\ell'}^{+1}.
\]

We use the operators $\Phi_{\ell_1,\ell_2}$ and $\Phi_\ell$ to give an explicit definition of the operators $M_\tau$ from Section~\ref{sect:harmonci analysis on subset spaces}. Let $e_\ell$ denote the vector in $\N_0^\infty$ with $(e_\ell)_{\ell'} = 1$ if $\ell = \ell'$ and $0$ otherwise. For $\tau \in D_0$, $M_\tau$ becomes the identity operator $\Hcal_0^1 \to \C$. Each $\tau \in D_1$ is of the form $\tau = e_\ell$ for some $\ell$, and for such a vector $\tau$, the operator $\smash{M_\tau}$ is the identity operator $\Hcal_\ell^p \to \Hcal_\ell^p$, where $p = {(-1)^\ell}$. If $\tau \in D_2$ is of the form $\tau = e_{\ell_1} + e_{\ell_2}$ with $\ell_1 \neq \ell_2$, then $M_\tau$ is given by the operator $\smash{\Phi_{\ell_1,\ell_2}^{-1}}$. If $\tau \in D_2$ is of the form $2e_{\ell}$ for some $\ell \in \N_0$, then $M_\tau$ is given by $\smash{\Phi_\ell^{-1}}$. 

The representations in $\hat\Gamma$ can be indexed by $(\ell, p) \in \N_0 \times \{\pm 1\}$, and a symmetry adapted system of $X_2$ has the form 
\[
\big\{ e_{(\ell, p), \tau, m} : \ell \in \N_0, \, p = \pm 1, \, \tau \in R_{(\ell,p)}, \, -\ell \leq m \leq \ell\big\},
\]
where $R_{(\ell,p)} = R^0_{(\ell,p)} \cup R^1_{(\ell,p)} \cup R^2_{(\ell,p)}$,
\[
R^0_{(\ell, p)} = \begin{cases}
\{0\} & \text{if } \ell = 0 \text{ and } p = 1,\\
\emptyset & \text{otherwise},
\end{cases} \quad 
R^1_{(\ell, p)} = \begin{cases}
\{e_\ell\} & \text{if } p = (-1)^\ell,\\
\emptyset & \text{otherwise},
\end{cases}
\]
and
\[
R^2_{(\ell, p)} = \Big\{e_{\ell_1} + e_{\ell_2} : \delta_{2 \nmid \ell} \leq |\ell_1 - \ell_2| \leq \ell \leq \ell_1 + \ell_2, \, (-1)^{\ell_1+\ell_2} = p\Big\},
\]
where $\delta_{2\nmid\ell}$ is $1$ if $\ell$ is odd, and $0$ if $\ell$ is even.
Here the basis element $e_{(\ell, p), \tau, m}$ can be computed as $L_i(T_i(M_\tau(Y_\ell^m)))$, where $i = \sum_{\ell'} \tau_{\ell'}$.

The rows and columns of the zonal matrices  $Z_{(\ell,p)}(T, T')$ constructed using this symmetry adapted system are indexed by $R_{(\ell, p)}$. To obtain the finite  dimensional inner approximating cones we need to select finite subsets $R_{(\ell,p),d} \subseteq R_{(\ell, p)}$ so that $\smash{\bigcup_{d=0}^\infty} R_{(\ell,p),d} = R_{(\ell, p)}$, and for each $d$ only finitely many sets $R_{(\ell,p),d}$ are nonempty. One particularly nice way (see below) to do this is to set
\[
R_{(\ell,p),d} = \big\{ \tau \in R_{(\ell,p)} : \sum_i \tau_i \leq d \big\}.
\]

All irreducible representations of $O(3)$ are of real type, and even though the above symmetry adapted system is not real valued, the zonal matrices constructed above are all real valued. As shown in Section~\ref{subsecSymmetry adapted systems and zonal matrices}, we can use these to construct the sequence $\{D_d\}$ of inner approximations of $\smash{\Ccal(X_2 \times X_2)_{\succeq 0}^\Gamma}$. By Lemma~\ref{lem:energy:restircted kernels} we then get the desired sequence of inner approximations of $\smash{\Ccal(I_2 \times I_2)_{\succeq 0}^\Gamma}$.

The symmetry adapted system above is constructed in such a way that the functions
\begin{equation}
\label{eq:a2polynomial}
(x_1,\ldots,x_i) \mapsto A_2 Z_{(\ell, p)}(\cdot, \cdot)_{\tau, \tau'} (\{x_1,\ldots,x_i\}),
\end{equation}
for $0 \leq i \leq 4$ and $\tau,\tau' \in R_{(\ell,p),d}$, are polynomials (of degree $2d$ in $3i$ variables), which will be important in the next section. For $n > 3$ and $t > 2$ the construction is more involved, but by using higher dimensional spherical harmonics one can construct a symmetry adapted system in the same way again having the above polynomial property.

\section{Semidefinite programs with semialgebraic constraints}
\label{sec:reduction to sdps}

In this section we reduce the dual problem $E_t^*$ for the Riesz $s$-energy problem on $S^{n-1}$, with $s \in \N$, to a sequence of semidefinite programs with semialgebraic constraints. Here, by a \emph{semialgebraic constraint}, we mean the requirement that a polynomial, whose coefficients depend linearly on the entries of the positive semidefinite matrix variable(s), is nonnegative on a given basic closed semialgebraic set. A \emph{basic closed semialgebraic set} in $\R^n$ is a subset that has a description of the form
\[
S(g_1,\ldots,g_m) =\big\{x \in \R^n : g_i(x) \geq 0 \text{ for } i \in [m]\big\},
\]
where $g_1,\ldots,g_m \in \R[x_1,\ldots,x_n]$. In Section~\ref{sec:putinars theorem for invaraint} we show how these programs can be approximated by semidefinite programs and how symmetries in the semialgebraic constraints can be exploited to (further) block diagonalize the semidefinite programming formulations.

Following Section~\ref{sec:derivation of the hierarchy via relaxations} and Section~\ref{sec:infinite dimensional optimization}, the second step in the dual hierarchy for the Riesz $s$-energy problem on $S^{n-1}$ reads
\begin{align*}
E_t^* = \sup \Big\{ \sum_{i=0}^{2t} \binom{N}{i} a_i : \; & a \in \R^{\{0,\ldots,2t\}}, \, K \in \Ccal(I_t \times I_t)_{\succeq 0},\\[-1em]
& a_i + A_tK(S) \leq f(S) \text{ for } S \in I_{=i} \text{ and } i = 0,\ldots,2t \Big\},
\end{align*}
where $I_t$ is the set of independent sets of cardinality at most $t$ in a topological packing graph $G$ on $S^{n-1}$ as discussed in Section~\ref{sec:derivation of the hierarchy via relaxations}, and where
\[
f(S) = \begin{cases}
\| x - y\|^{-s} & \text{if } S = \{x,y\} \text{ with } x \neq y,\\
0 & \text{otherwise.}
\end{cases}
\]
From the definition of $G$ we obtain a $U \in (-1, 1)$ such that two vertices $x$ and $y$ in $S^{n-1}$ are adjacent if and only if $x \cdot y \geq U$.

In the symmetrized version of this problem, as derived in Section~\ref{sec:energy:symmetry reduction}, we restrict to $O(n)$-invariant kernels. In Section~\ref{subsect:explicit harmonic analysis} we give a sequence $\{D_d\}$ of inner approximating cones 
of $\smash{\Ccal(I_t \times I_t)_{\succeq 0}^\Gamma}$, where $\Gamma = O(n)$.  By replacing $\Ccal(I_t \times I_t)_{\succeq 0}^\Gamma$ with $D_d$, we obtain the following sequence of approximations:
\begin{align*}
E_{t,d}^* = \sup \Big\{ \sum_{i=0}^{2t} \binom{N}{i} a_i : \; & a \in \R^{\{0,\ldots,2t\}}, \, F_\pi \in S_{\succeq 0}^{R_{\pi,d}} \text{ for } \pi \in \hat\Gamma,\\[-1em]
& a_i + A_2K(S) \leq f(S) \text{ for } S \in I_{=i} \text{ and } i = 0,\ldots,2t \Big\},
\end{align*}
where
\[
K(J,J') = \sum_{\pi \in \hat\Gamma}  \Big\langle F_\pi, Z_{\pi,d}(J, J)^*\Big\rangle.
\] 
For each $d$, the problem $E_{t,d}^*$ has finite dimensional variable space, and as shown in Section~\ref{sec:energy:symmetry reduction}, we have $E_{t,d}^* \to E_t^* = E_t$ as $d \to \infty$, and, as shown in Section~\ref{sec:energy convergence to the optimal energy}, we have $E_{N,d}^* \to E$ as $d \to \infty$.

Given $0 \leq i \leq 2t$, let $\R[x_1,\ldots,x_i]$ be the ring of real polynomials in $ni$ variables, where each $x_k$ denotes a vector of $n$ variables. As shown in Section~\ref{subsect:explicit harmonic analysis}, there exist polynomials $p_i \in \R[x_1,\ldots,x_i]$ such that
\[
p_i(x_1,\ldots,x_i) = a_i + A_tK(\{x_1,\ldots,x_i\}) \quad \text{for all} \quad \{x_1,\ldots,x_i\} \in I_{=i},
\]
where the coefficients of $p_i$ depend linearly on the entries of the vector $a$ and the matrices $F_{(\ell,p)}$. For the case where $n=3$ and $t=2$ we have explicitly derived these polynomials in Section~\ref{subsect:explicit harmonic analysis}.

By construction, the polynomials $p_i \in \R[x_1,\ldots,x_i]$ are $O(n)$-invariant:
\[
p_i(x_1,\ldots,x_i) = p_i(\gamma x_1, \ldots, \gamma x_i) \quad \text{for all} \quad x_1,\ldots,x_i \in S^2 \quad \text{and} \quad \gamma \in O(n).
\]
So, by a theorem from invariant theory (see, for instance, \cite{KraftProcesi1996}*{Theorem 10.2}) it follows that $p_i$ can be written as a polynomial in the inner products 
\[
x_1 \cdot x_1, x_1 \cdot x_2, \ldots, x_i \cdot x_i.
\]
On the sphere we have the identity $x_1 \cdot x_1 = \ldots = x_i \cdot x_i = 1$, so there exists a (in general non unique) polynomial $q_i \in \R[u_1,\ldots,u_{\binom{i}{2}}]$ such that
\begin{equation}
\label{eqpq}
p_i(x_1,\ldots,x_i) = q_i(x_1 \cdot x_2, x_1 \cdot x_3, \ldots, x_{i-1} \cdot x_i) \quad \text{for all} \quad x_1,\ldots,x_i \in S^{n-1},
\end{equation}
where the coefficients of $q_i$ again depend linearly on the entries of the vector $a$ and the matrices $F_{(\ell,p)}$. The above result is nonconstructive, and in Section~\ref{sec:enegy:computations} we show how we compute the polynomials $q_i$. The use of this theorem is why we need $O(n)$-invariance instead of just $SO(n)$-invariance, for otherwise the polynomials $q_i$ would also need to depend on the determinants of the $n \times n$ matrices whose columns are given by $n$ distinct vectors from $\{x_1,\ldots,x_i\}$, which means we would have too many variables to be able to perform computations.

The degenerate polynomials $q_0$ and $q_1$ have $0$ variables; they are linear combinations of the entries of the vector $a$ and the matrices $F_{(\ell, p)}$. The constraints $a_0 + A_tK|_{I_{=0}} \leq 0$ and $a_1 + A_tK|_{I_{=1}} \leq 0$ in $E_{t,d}^*$, where we use that $f|_{I_1} \equiv 0$, therefore reduce to the two linear constraints $q_0 \leq 0$ and $q_1 \leq 0$.

For distinct $x,y \in S^{n-1}$ we have 
\[
f(\{x,y\}) = \frac{1}{\|x-y\|_2^s} = \frac{1}{(2-2 x\cdot y)^{s/2}}.
\]
By using the substitution $w = \sqrt{2-2u}$, we can reformulate the constraint 
\[
a_2 + A_tK|_{I_{=2}} \leq f|_{I_{=2}}
\]
in $E_{t,d}^*$ as the semialgebraic constraint
\[
 w^s \,q_2(1- w^2/2) \leq 1 \quad  \text{for} \quad w \in [\sqrt{2-2U}, 2].
\]
If $s$ is even, we can use a more efficient formulation (in terms of the degree of the polynomials), where we write the constraint $a_2 + A_tK|_{I_{=2}} \leq f|_{I_{=2}}$ in $E_{t,d}^*$ as the semialgebraic constraint
\[
(2-2u)^{s/2} q_2(u) \leq 1 \quad  \text{for} \quad u \in [-1, U].
\]
The set of independent sets of cardinality $i$ can be described as
\[
I_{=i} = \Big\{ \{x_1,\ldots,x_i\} \subseteq S^{n-1} : x_k \cdot x_{k'} \leq U \text{ for } 1 \leq k < k' \leq i \Big\}.
\]
So, with
\[
P_i = \big\{ (x_1 \cdot x_2, x_1 \cdot x_3, \ldots, x_{i-1} \cdot x_i) : \{x_1, \ldots, x_i\} \in I_{=i}\big\},
\]
a constraint of the form $a_i + A_tK|_{I_{=i}} \leq  0$, for $i \in \{3,4,\ldots,2t\}$, can be written as $q_i|_{P_i} \leq 0$. 
This means we can write the problem $E_{t,d}^*$ as
\begin{align}
\label{eq:etd}
E_{t,d}^* = \sup \Big\{ \sum_{i=0}^{2t} \binom{N}{i} a_i : \; & a \in \R^{\{0,\ldots,{2t}\}}, \, F_\pi \in S_{\succeq 0}^{R_{\pi,d}} \text{ for } \pi \in \hat\Gamma,\\[-0.9em]
& q_0 \leq 0, \, q_1 \leq 0,\nonumber\\
& w^s \,q_2(1- w^2/2) \leq 1 \text{ for } w \in [\sqrt{2-2U}, 2] \text{ if } 2 \nmid s,\nonumber\\
& (2-2u)^{s/2} q_2(u) \leq 1 \text{ for } u \in [-1, U] \text{ if } 2 \mid s,\nonumber\\[-0.4em]
& q_i|_{P_i} \leq 0 \text{ for } i = 3,\ldots,2t \Big\}.\nonumber
\end{align}

To describe $P_i$ as a semialgebraic set we first observe that by using the Gram decomposition of a positive semidefinite matrix, it can be written as
\[
P_i = \Big\{u \in \R^{\binom{i}{2}} : u_j \leq U \text{ for } j \in \left[\tbinom{i}{2}\right], \, \mathcal E(u) \succeq 0,\, \mathrm{rank}(\mathcal E(u)) \leq n \Big\},
\]
where $\mathcal E(u)$ is the symmetric $i \times i$-matrix with ones on the diagonal and the entries of $u$ in the upper and lower diagonal parts. Using Sylvester's criterion for positive semidefinite matrices we obtain the semialgebraic description
\begin{align}
\label{eq:semialgebraic descri}
P_i = \Big\{ u \in \R^{\binom{i}{2}} : \; & u_j \leq U \text{ for } j \in [\tbinom{i}{2}], \\[-0.15em]
& g(u) \geq 0 \text{ for } g \in G_{i,j} \text{ with } 2 \leq j \leq n,\nonumber\\[-0.3em]
& g(u) = 0 \text{ for } g \in G_{i,j} \text{ with } n+1 \leq j \leq i \Big\},\nonumber
\end{align}
where $G_{i,j}$ is the set of principal minors (the determinants of principal submatrices) of $\mathcal E(u) \in \R^{i \times i}$ of order $j$. This shows $E_{t,d}^*$ is a semidefinite program with semialgebraic constraints.

Here we make two observations which are important from a computational perspective in modeling the semialgebraic constraints as semidefinite constraints (see Section~\ref{sec:putinars theorem for invaraint}). The first observations is that $P_i$ is compact and that the constraints $g(u) \geq 0$ for $g \in G_{i,2}$  provide an ``algebraic certificate'' of this compactness. As explained in the following section, this means the above description is Archimedean, so that we can apply Putinar's theorem.

The second observation concerns additional symmetry in the semialgebraic constraints. The particles in an energy minimization problem are interchangeable, and this implies that the polynomials $p_1,\ldots,p_{2t}$ are not only invariant under the group $O(n)$, but also under (some) permutations of the $ni$ variables: We have
\[
p_i(x_1,\ldots,x_i) = p_i(x_{\sigma(1)}, \ldots, x_{\sigma(i)}) \quad \text{for all} \quad x_1,\ldots,x_i \in S^{n-1} \quad \text{and} \quad \sigma \in S_i.
\]
This implies we can choose the polynomials $q_1,\ldots,q_{2t}$ in such a way that they have additional symmetry. Let $\smash{\mathrm{Aut}^*(K_i)}$ be the edge-automorphism group of the complete graph $K_i$ on $i$ vertices, and let $\phi_i \colon S_i \to  \mathrm{Aut}^*(K_i)$ be the (not necessarily surjective) map that sends a permutation of the vertices of $K_i$ to the corresponding permutation of the edges. If $q_i$ is a polynomial that satisfies \eqref{eqpq}, then the polynomial
\begin{equation}
\label{eqsymmetrizeq}
\overline q_i(u_1,\ldots,u_r) = \frac{1}{i!}\sum_{\sigma \in \phi_i(S_i)} q_i(u_{\sigma(1)}, \ldots, u_{\sigma(r)}) 
\end{equation}
also satisfies \eqref{eqpq} and is invariant under the group $\phi_i(S_i)$. So, we may assume the polynomials $q_i$ to be $\phi_i(S_i)$-invariant. Since the sets $G_{i,j}$ used to define the semialgebraic sets $P_i$ are also invariant under this group, we say the semialgebraic constraints in the problem $E_{t,d}^*$ are $\phi_i(S_i)$-invariant.

\section{Sum of squares characterizations for invariant polynomials}
\label{sec:putinars theorem for invaraint}

In this section we first give some background on how Putinar's theorem can be used to approximate a semidefinite program with semialgebraic constraints by a sequence of block diagonal semidefinite programs. Then we show how symmetry in the polynomial constraints can be used to further block diagonalize these semidefinite programming formulations into smaller blocks. We show this can lead to significant computational savings by applying this to the problems $E_{2,d}^*$ from the previous section.

The \emph{quadratic module} generated by 
$
g_1,\ldots,g_m \in \R[x_1,\ldots,x_n]
$
is given by
\[
M(g_1,\ldots,g_m) = \Big\{ \sum_{i = 0}^m g_i s_i : s_0,\ldots,s_m \in \R[x_1,\ldots,x_n] \text{ SOS polynomials}\Big\},
\]
where $g_0$ denotes the constant one polynomial, and where a sum of squares (SOS) polynomial is a polynomial of the form $\sum_k p_k^2$, with $p_1,\ldots,p_K \in \R[x_1,\ldots,x_n]$.
Polynomials in $M(g_1,\ldots,g_m)$ are nonnegative on the \emph{basic closed semialgebraic} set
\[
S(g_1,\ldots,g_m) = \big\{x \in \R^n : g_i(x) \geq 0 \text{ for } i \in [m]\big\}.
\]
\emph{Putinar's theorem} \cite{Putinar1993a} says that under the condition that $M(g_1,\ldots,g_m)$ is Archimedean, every strictly positive polynomial on $S(g_1,\ldots,g_m)$ is contained in $M(g_1,\ldots,g_m)$. A quadratic module $M(g_1,\ldots,g_m)$ is  \emph{Archimedean} if it contains a polynomial $p$ such that $S(p)$ is compact. Such a polynomial $p$ is a simple algebraic certificate of the compactness of $S(g_1,\ldots,g_m)$.

For $\delta \in \N_0$, we define the \emph{truncated quadratic module} $M_\delta(g_1,\ldots,g_m)$ in the same way as we defined $M(g_1,\ldots,g_m)$, except now  we require each $s_i$ to have degree at most $2h_i$, where $h_i = \lfloor (\delta-\mathrm{deg}(g_i))/2 \rfloor$. Since higher degree terms can cancel each other out, the inclusion
\[
M_\delta(g_1,\ldots,g_m) \subseteq M(g_1,\ldots,g_m) \cap \R[x_1,\ldots,x_n]_\delta,
\]
can be strict. Here $\R[x_1,\ldots,x_n]_\delta$ denotes the vector space of polynomials of degree at most $\delta$. Putinar's theorem shows that each polynomial $p \in \R[x_1,\ldots,x_n]$ with $p(x_1,\ldots,x_n) > 0$ for all $(x_1,\ldots,x_n) \in S(g_1,\ldots,g_m)$ is contained in $M_\delta(g_1,\ldots,g_m)$ for all large enough $\delta$, and in \cite{NieSchweighofer2007} an upper bound on the smallest $\delta$ for which this is true is given in terms of the polynomials $g_1,\ldots,g_m$, the degree of $p$, and the minimum of $p$ over $S(g_1,\ldots,g_m)$.

Let $v_{h_i}(x)$ be a vector whose entries form a basis of $\R[x_1,\ldots,x_n]_{h_i}$. A polynomial of degree at most $2h_i$ is a sum of squares if and only if it can be written as
\[
s_i(x) = v_{h_i}(x)^{\sf T} Q_i  v_{h_i}(x),
\]
where $Q_i$ is a positive semidefinite matrix of size $\smash{\binom{n+h_i}{n}}$. (To prove $v_{h_i}(x)^{\sf T} Q_i v_{h_i}(x)$ is a sum of squares one can use a Cholesky factorization $Q_i = R_i^{\sf T} R_i$). This implies 
\[
M_\delta(g_1,\ldots,g_m) \simeq S_{\succeq 0}^{\binom{n+h_0}{n}} \times \cdots \times S_{\succeq 0}^{\binom{n+h_m}{n}}.
\] 

In a semidefinite program with semialgebraic constraints, we can now approximate a constraint of the form
\[
p(x_1, \ldots, x_n) \geq 0 \quad \text{for} \quad (x_1,\ldots,x_n) \in S(g_1,\ldots,g_m),
\]
by a degree~$\delta$ sum of squares characterization. By this we mean that we introduce additional positive semidefinite matrix variables $Q_0,\ldots,Q_m$ and replace the constraint $p|_{S(g_1,\ldots,g_m)} \geq 0$ by a set of linear constraints that enforces the identity
\[
p(x) = \sum_{i=0}^m g_i(x) v_{h_i}(x)^{\sf T} Q_i v_{h_i}(x).
\]
To obtain a set of linear constraints that enforces the above identity, we can express the left and right hand sides in terms of the same polynomial basis and equate the coefficients with respect to this basis. Here the basis choice for the entries of $v_i$ and the basis choice for the linear constraints can have great impact on the numerical conditioning of the resulting semidefinite program (see, for instance, \cite{LaatOliveiraVallentin2012}, where this happens for $2$-point bounds for sphere packing), but in the computations in this paper we only use the standard basis because we only use polynomials of low degree. 

The approximations given by the semidefinite programs obtained in this way become arbitrarily good as we take sum-of-squares characterizations of higher degrees. Moreover, if a semidefinite program with semialgebraic constraints has an optimal solution where all the inequalities are strict, then the optimum is obtained for a finite degree sum-of-squares characterization.

We will show that if $p$ is invariant under the action of a group, then we can further block diagonalize the matrices $Q_i$. Let $\Gamma$ be a finite subgroup of $U(\C^n)$. This induces the unitary representation 
\begin{equation}
\label{eq:reprspol}
L \colon \Gamma \to U(\C[x_1,\ldots,x_n]), \, L(\gamma) p(x) = p(\gamma^{-1}x),
\end{equation}
where $\C[x_1,\ldots,x_n]$ has the inner product $\langle p, q\rangle = \sum_\alpha p_\alpha \overline{q_\alpha}$.
A polynomial $p$ is said to be $\Gamma$-invariant if $L(\gamma)p = p$ for all $\gamma \in \Gamma$, and a set of polynomials $\{g_1,\ldots,g_m\}$ is said to be $\Gamma$-invariant if 
\[
\{L(\gamma) g_1, \ldots, L(\gamma) g_m\} = \{g_1,\ldots,g_m\} \quad \text{for all} \quad \gamma \in \Gamma.
\]
Let $\Gamma_i$ be the stabilizer subgroup of $\Gamma$ with respect to $g_i$; that is, 
\[
\Gamma_i = \{\gamma \in\Gamma : L(\gamma)g_i = g_i\},
\]
where $\Gamma_0 = \Gamma$ because $g_0 = 1$. 

In the next proposition we show that if the polynomials $p$ and the set $\{g_1,\ldots,g_m\}$ are invariant under the group action, then the sum of squares polynomials can be taken to be invariant under the corresponding stabilizer subgroups. In \cite{RienerTheobaldAndrenLasserre2013} and  \cite{CimpriKuhlmannScheiderer2009} similar results for invariant polynomials are shown, although not in this form for an invariant set $\{g_1,\ldots,g_m\}$ of polynomials and using stabilizer subgroups.

\begin{proposition}
\label{prop:invariant putinar}
If $p \in M_\delta(g_1,\ldots,g_m)$ is $\Gamma$-invariant, then there are $\Gamma_i$-invariant sum of squares polynomials $s_i \in \R[x_1,\ldots,x_n]_{h_i}$ such that
\[
p = \sum_{i = 0}^m g_i s_i.
\]
\end{proposition}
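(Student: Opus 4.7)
The plan is to prove this by a Reynolds-operator (group-averaging) argument applied to an arbitrary representation $p = \sum_{i=0}^m g_i s_i$ of $p$ in $M_\delta(g_1,\ldots,g_m)$. Since $\{g_1,\ldots,g_m\}$ is $\Gamma$-invariant and $g_0 = 1$ is fixed by $L$, for each $\gamma \in \Gamma$ there is a well-defined permutation $\sigma_\gamma$ of $\{0,1,\ldots,m\}$ with $\sigma_\gamma(0) = 0$ and $L(\gamma) g_i = g_{\sigma_\gamma(i)}$, and $\gamma \mapsto \sigma_\gamma$ is a group homomorphism whose kernel is $\bigcap_i \Gamma_i$. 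Applying $L(\gamma)$ to the identity $p = \sum_i g_i s_i$ and using $L(\gamma)p = p$ gives
\[
p \;=\; \sum_{i=0}^m g_{\sigma_\gamma(i)}\, L(\gamma) s_i \;=\; \sum_{i=0}^m g_i\, L(\gamma)\, s_{\sigma_\gamma^{-1}(i)},
\]
where the second equality reindexes by $i \mapsto \sigma_\gamma^{-1}(i)$.

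Next I would average these $|\Gamma|$ identities over $\gamma \in \Gamma$ and define
\[
\tilde s_i \;=\; \frac{1}{|\Gamma|} \sum_{\gamma \in \Gamma} L(\gamma)\, s_{\sigma_\gamma^{-1}(i)},
\]
so that $p = \sum_{i=0}^m g_i \tilde s_i$. Three properties must then be verified. First, $\tilde s_i$ is a sum of squares: each $L(\gamma)$ is induced by a linear change of variables, so $L(\gamma) s_j$ is SOS whenever $s_j$ is, and a convex combination of SOS polynomials is SOS. Second, the degrees fit: $L(\gamma)$ preserves degree and, because $\{g_1,\ldots,g_m\}$ is $\Gamma$-invariant, $\deg(g_{\sigma_\gamma^{-1}(i)}) = \deg(g_i)$, hence $h_{\sigma_\gamma^{-1}(i)} = h_i$ and each summand has degree at most $2h_i$, so $\tilde s_i$ does too.

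Third, and most importantly, I would check $\Gamma_i$-invariance. For $\eta \in \Gamma_i$, the substitution $\gamma \mapsto \eta^{-1}\gamma$ gives
\[
L(\eta)\tilde s_i \;=\; \frac{1}{|\Gamma|} \sum_{\gamma \in \Gamma} L(\gamma)\, s_{\sigma_{\eta^{-1}\gamma}^{-1}(i)},
\]
and since $\sigma_{\eta^{-1}\gamma}^{-1}(i) = \sigma_\gamma^{-1}\sigma_\eta(i) = \sigma_\gamma^{-1}(i)$ (using $\eta \in \Gamma_i \iff \sigma_\eta(i) = i$), this equals $\tilde s_i$. Combined with the previous step, this establishes the claim.

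The only real subtlety is making sure that averaging does not leave the setting of real polynomials: since $\Gamma$ is presented as a subgroup of $U(\C^n)$, one must either assume the action restricts to $\R[x_1,\ldots,x_n]$ (which is the situation in all of the paper's applications, where the relevant groups $\phi_i(S_i)$ are permutation groups acting by real matrices on the coordinates) or, more generally, follow the above averaging with a final real-part projection $\tilde s_i \mapsto (\tilde s_i + \overline{\tilde s_i})/2$; since $p$ is real and a sum of two SOS polynomials is SOS, this preserves both the SOS property and the $\Gamma_i$-invariance. This is the main technical point to get right; everything else is a direct bookkeeping computation with the permutation representation $\gamma \mapsto \sigma_\gamma$.
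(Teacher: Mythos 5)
Your proof is correct and takes essentially the same approach as the paper: your polynomials $\tilde s_i = \frac{1}{|\Gamma|}\sum_{\gamma\in\Gamma} L(\gamma)\, s_{\sigma_\gamma^{-1}(i)}$ coincide, after regrouping the sum over $\gamma$ by the value of $\sigma_\gamma^{-1}(i)$, with the paper's $\bar s_i$, which the paper writes instead in terms of the sets $\Gamma_{i,j}=\{\gamma\in\Gamma : L(\gamma)g_i=g_j\}$. Your explicit check of the degree bound $h_{\sigma_\gamma^{-1}(i)}=h_i$ and your remark on the real-versus-complex issue flag details the paper leaves implicit, but the underlying Reynolds-operator construction and the $\Gamma_i$-invariance verification are the same.
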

\begin{proof}
Since $p$ is $\Gamma$-invariant we have
\[
p(x) = \frac{1}{|\Gamma|}\sum_{\gamma \in \Gamma} L(\gamma)p(x) = \frac{1}{|\Gamma|}\sum_{\gamma \in \Gamma} p(\gamma^{-1} x),
\]
and since $p$ lies in $M_\delta(g_1,\ldots,g_m)$, we have
\[
 \frac{1}{|\Gamma|}\sum_{\gamma \in \Gamma} p(\gamma^{-1} x) = \frac{1}{|\Gamma|}\sum_{\gamma \in \Gamma}\sum_{i=0}^m g_i(\gamma^{-1}x) s_i(\gamma^{-1}x).
 \]
Let $\Gamma_{i,j} = \{ \gamma \in \Gamma : L(\gamma) g_i = g_j \},$ so that, for each $0 \leq i \leq m$, we have $\Gamma_{i,i} = \Gamma_i$ and $\Gamma$ is the disjoint union of $\Gamma_{i,0},\ldots,\Gamma_{i,m}$.
Then,
\begin{align*}
 \frac{1}{|\Gamma|}\sum_{\gamma \in \Gamma}\sum_{i=0}^m g_i(\gamma^{-1}x) s_i(\gamma^{-1}x) 
&= \frac{1}{|\Gamma|} \sum_{j=0}^m \sum_{i=0}^m \sum_{\gamma \in \Gamma_{i,j}} g_i(\gamma^{-1} x) s_i(\gamma^{-1}x)\\
&= \frac{1}{|\Gamma|}\sum_{j=0}^m g_j(x) \sum_{i=0}^m\sum_{\gamma \in \Gamma_{i,j}} s_i(\gamma^{-1}x)
\end{align*}
So, if we define 
\[
\bar s_j(x) = \frac{1}{|\Gamma|} \sum_{i=0}^m\sum_{\gamma \in \Gamma_{i,j}} s_i(\gamma^{-1}x),
\]
then $p(x) = \sum_{j=0}^m g_j(x) \bar s_j(x)$. Since the cone of sum of squares polynomials is $\mathrm{GL}(\R[x])$-invariant, we see that the functions $\bar s_0, \ldots, \bar s_m$ are sums of squares polynomials. Moreover, for $\eta \in \Gamma_j$, we have
\[
L(\eta) \bar s_j(x) = \frac{1}{|\Gamma|} \sum_{i=0}^m\sum_{\gamma \in \Gamma_{i,j}} s_i(\gamma^{-1}\eta^{-1}x) = \frac{1}{|\Gamma|} \sum_{i=0}^m\sum_{\gamma \in \eta\Gamma_{i,j}} s_i(\gamma^{-1}x),
\]
so $\Gamma_i$-invariance of $\bar s_i$ follows from the identity 
\[
\Gamma_{j,k}\Gamma_{i,j} = \Gamma_{i,k} \quad \text{for all} \quad 0 \leq i,j,k \leq m.\qedhere
\]
\end{proof}

In \cite{GatermannParrilo2004} it is shown how the matrix used to represent an invariant sum of squares polynomial can be block diagonalized, and how this can be used to simplify semidefinite programs involving such polynomials. We combine this with Putinar's theorem and the above proposition to block diagonalize the representation of a positive invariant polynomial on an invariant semialgebraic set. To describe how this block diagonalization works we assume the group $\Gamma$ in \eqref{eq:reprspol}  consists of permutation matrices, because in this special case we can use the theory from Section~\ref{chap:energy:sec:explicit symmetry}, and in our application to $E_{t,d}^*$ all relevant groups are of this form. 

We can view the matrix $Q$ in a sum of squares representation
\[
s(x) = v_h(x)^{\sf T} Q v_h(x)
\]
as a positive definite kernel $[x_1,\ldots,x_n]_h \times [x_1,\ldots,x_n]_h \to \R$, where $[x_1,\ldots,x_n]_h$ is the set of monomials of degree at most $h$. The group $\Gamma$ has an obvious action on $[x_1,\ldots,x_n]_h$, and if $s$ is $\Gamma$-invariant, then we may assume $Q$ to be a $\Gamma$-invariant kernel: Represent $L(\gamma)$ in the monomial basis, so that $L(\gamma) v(x) = v(\gamma^{-1} x)$, and
\begin{align*}
s(x) 
&= \frac{1}{|\Gamma|}\sum_{\gamma \in \Gamma} s(\gamma^{-1} x) 
= \frac{1}{|\Gamma|}\sum_{\gamma \in \Gamma} v(\gamma^{-1} x)^{\sf T} Q v(\gamma^{-1} x)\\
&= \frac{1}{|\Gamma|}\sum_{\gamma \in \Gamma} v(x)^{\sf T} L(\gamma)^{\sf T} Q L(\gamma) v(x) 
= v(x)^{\sf T} \left(\frac{1}{|\Gamma|}\sum_{\gamma \in \Gamma} L(\gamma)^{\sf T} Q L(\gamma)\right) v(x), 
\end{align*}
which means we may replace $Q$ by its symmetrization 
$
1/|\Gamma|\sum_{\gamma \in \Gamma} L(\gamma)^{\sf T} Q L(\gamma).
$

By viewing $Q$ as a $\Gamma$-invariant kernel $[x_1,\ldots,x_n]_h \times [x_1,\ldots,x_n]_h \to \R$, we get
\[
Q(u, v) = \sum_{\pi \in \widehat\Gamma} \langle G_\pi, Z_\pi(u, v) \rangle, \quad \text{for} \quad u,v \in [x_1,\ldots,x_n]_h,
\]
where the $G_\pi$ are Hermitian positive semidefinite matrices. Here the $Z_\pi$ are the zonal matrices as defined in Section~\ref{subsecSymmetry adapted systems and zonal matrices}, where the topological space $X$ is now the finite set $[x_1,\ldots,x_n]_h.$  We have,
\begin{align*}
s(x) &= \sum_{u, v \in [x_1,\ldots,x_n]_h} Q(u, v) uv\\
&= \sum_{\pi \in \widehat\Gamma} \langle \widehat Q(\pi), \sum_{u, v \in [x_1,\ldots,x_n]_h } Z_\pi(u, v) uv \rangle = \sum_{\pi \in \widehat\Gamma} \langle \widehat Q(\pi), Z_\pi(x) \rangle,
\end{align*}
where we define the modified zonal matrices
\[
Z_\pi(x) = \sum_{u, v \in [x_1,\ldots,x_n]_h} Z_\pi(u,v) uv.
\]
In general we have to use Hermitian positive semidefinite blocks $G_\pi$, but in our computations all groups only have real irreducible representations, so, as explained at the end of Section~\ref{subsecSymmetry adapted systems and zonal matrices} we can use real blocks. Since the groups here are finite, we can compute the symmetry adapted system algorithmically. Before we explain how this is done we note that we can compute the size $m_\pi$ of $G_\pi$ without having to compute the actual block diagonalization. 

For this we first notice that $m_\pi$ now denotes the number of representations in an orthogonal decomposition of $\R[x]_h$ into irreducible unitary representations that are unitarily equivalent to $\pi$. First observe that 
$
m_\pi = m_\pi(0) + \cdots + m_\pi(h),
$
where $m_\pi(k)$ denotes the denotes the number of representations in an orthogonal decomposition of $\R[x]_{=k}$ into irreducible unitary representations that are unitarily equivalent to $\pi$.  Here $\R[x]_{=k}$ is the space of homogeneous polynomials of degree $k$. We can compute the numbers $m_\pi(k)$ by a theorem of Molien \cite{Molien1897}, which gives the equality of the formal power series 
\[
\sum_{k=0}^\infty m_\pi(k) t^k = \frac{1}{|\Gamma|} \sum_{\gamma \in \Gamma} \frac{\mathrm{trace}(\pi(\gamma))}{\mathrm{det}(I - t L(\gamma))}.
\]

To compute the actual block diagonalization we use the projection algorithm as described in \cite{Serre1977}, which generates the symmetry adapted systems used to construct the zonal matrices. This algorithm works as follows: First define the operators
\[
p_{j,j'}^\pi = \frac{d_\pi}{|\Gamma|} \sum_{\gamma\in\Gamma} \pi(\gamma^{-1})_{j,j'} L(\gamma).
\]
Then, for each $\pi \in \hat{\Gamma}$, we choose a basis $\{e_{\pi,i,1}\}_{i=1}^{m_\pi}$ of $\mathrm{Im}(p_{1,1}^\pi)$, and for every $\pi$, $i$, $j$ we set
$
e_{\pi,i,j} = p_{j,1}^\pi e_{\pi,i,1}.
$
In \cite{Serre1977} it is shown that this yields a (nonorthonormal) symmetry adapted system, and if we choose the bases $\{e_{\pi,i,1}\}_{i=1}^{m_\pi}$ of $\smash{\mathrm{Im}(p_{j,j'}^\pi)}$ to be orthonormal, then it is not difficult to show the resulting symmetry adapted system is also orthonormal.

We apply the above techniques to the problems $E_{2,d}^*$ from Section~\ref{sec:reduction to sdps} for Riesz $s$-energy problems. First we show how to model the semialgebraic constraints using sum of squares characterizations without exploiting the additional symmetry. For the constraints $q_3|_{P_3} \leq 0$ and $q_4|_{P_4} \leq 0$ we use sum of squares characterizations of degree $\delta$, and we denote the resulting semidefinite program by $\smash{E_{2,d,\delta}^*}$. 

If $s$ is odd, we model the semialgebraic constraint 
\[
w^s \,q_2(1- w^2/2) \leq 1 \quad \text{for} \quad w \in [\sqrt{2-2U}, 2]
\]
by introducing positive semidefinite matrices $Q_{2,1}$ and $Q_{2,2}$ and adding a set of linear constraints to enforce the identity
\[
w^s \,q_2(1- w^2/2) =  1 + (w - \sqrt{2-2U}) \, v_{h_1}(w)^{\sf T} Q_{2,1} v_{h_1}(w) +(2 - w)  v_{h_1}(w)^{\sf T} Q_{2,2} v_{h_1}(w).
\]
As explained above, we know that for sufficiently large $h_1$, this sum-of-squares constraint approximates the above semialgebraic constraint arbitrarily well. However, for this special case, where we have an odd degree polynomial that is nonnegative on a compact interval, a result of Luk\'acs (see, for instance, \cites{PowersReznick2000,PolyaSzego1998}) says that for $h_1 = (2d+s-1)/2$ the above semialgebraic constraint and sum-of-squares constraint are identical.

If $s$ is even, we model the semialgebraic constraint 
\[
(2-2u)^{s/2} q_2(u) \leq 1 \quad \text{for} \quad u \in [-1, U]
\]
by introducing positive semidefinite matrices $Q_{2,1}$ and $Q_{2,2}$ and adding a set of linear constraints to enforce the identity
\[
(2-2u)^{s/2} q_2(u) = 1 + v_{h_2}(u)^{\sf T} Q_{2,1} v_{h_2}(u) + (u-1)(U - u)  v_{h_3}(u)^{\sf T} Q_{2,2} v_{h_3}(u),
\]
if $s/2 + d$ is even, and
\[
(2-2u)^{s/2} q_2(u) = 1+ (u -1) v_{h_4}(u)^{\sf T} Q_{2,1} v_{h_4}(u) +  (U - u) v_{h_4}(u)^{\sf T} Q_{2,2} v_{h_4}(u),
\]
if $s/2 + d$ is odd. By the same result of Luk\'acs mentioned above, the above semialgebraic constraint and sum of squares constraint are identical if we take $h_2 = (s/2+d)/2$, $h_3 = (s/2+d)/2 - 1$, and $h_4 = (s/2+d-1)/2$.

We model the semialgebraic constraint $q_3|_{P_3} \leq 0$ by introducing positive semidefinite matrix variables $Q_{3}$, $Q_{3,2,g}$ for $g \in G_{3,2}$, and $Q_{3,3,g}$ for $g \in G_{3,3}$, and adding a set of linear constraints to enforce the identity
\begin{align}
\label{eq:putinar rep 1}
&q_3(u) + v_{h'_1}(u)^{\sf T} Q_3 v_{h'_1}(u) + \sum_{k \in \{2,3\}} \, \sum_{g \in G_{3,k}} g(u) v_{h'_k}(u)^{\sf T} Q_{3,g} v_{h'_k}(u) = 0,
\end{align}
where $h_1' = \lfloor \delta/2 \rfloor$, $h_2' = \lfloor (\delta-2)/2 \rfloor$, $h_3' = \lfloor (\delta - 3)/2 \rfloor$.

In the semialgebraic description of $P_4$ in \eqref{eq:semialgebraic descri} we do not just have polynomial inequalities constraints but also the polynomial equality constraint $\det(\mathcal E(u)) = 0$. We could replace this equality constraint by two inequality constraints, but this would be computationally inefficient. Instead, we introduce positive semidefinite matrix variables $Q_4$, $Q_{4,2,g}$ for $g \in G_{4,2}$, $Q_{4,3,g}$ for $g \in G_{4,3}$, and $q_{\alpha,\pm 1} \in \R_{\geq 0}$ for $\alpha \in \N_0$ with $\|\alpha\|_1 = \sum_i \alpha_i \leq \delta-6$, and use the sum of squares characterization
\begin{align}
\label{eq:putinar representation}
& q_4(u) + v_{h_1'}(u)^{\sf T} Q_4 v_{h_1'}(u) + \sum_{k \in \{2,3\}} \, \sum_{g \in G_{4,k}} g(u) v_{h_k'}(u)^{\sf T} Q_{4,g} v_{h_k'}(u)\\
&\qquad + \det(\mathcal E(u)) \sum_{\alpha \in \N_0^6 : \|\alpha\|_1 \leq \delta-6} (q_{\alpha,1} - q_{\alpha,-1}) u_1^{\alpha_1} \cdots u_6^{\alpha_6} = 0.\nonumber
\end{align}

\begin{table}[!h]
\centering
\scriptsize
\begin{tabular}{lllllll}
\toprule
 &\multicolumn{3}{c}{Without symmetry reduction} & \multicolumn{3}{c}{With symmetry reduction}\\
\cmidrule(lr){2-4} \cmidrule(lr){5-7}
$\delta$ & $Q_4$ & $Q_{4,2,g}$ & $Q_{4,3,g}$ & $Q_4$ & $Q_{4,2,g}$ & $Q_{4,3,g}$\\
\midrule
0 & 1 & 0 & 0 & 1 & 0 & 0\\
1 & 1 & 0 & 0 & 1 & 0 & 0\\
2 & 7 & 1 & 0 & 2 & 1 & 0\\
3 & 7 & 1 & 1 &  2& 1 & 1\\
4 & 28 & 7 & 1 & 5 &4 &1\\
5 & 28 & 7 & 7 & 5 &4 & 3\\
6 & 84 & 28 & 7 & 12 & 13 & 3\\
7 & 84 & 28 & 28 & 12 & 13 & 9\\
8 & 210 & 84 & 28 & 29 & 33 & 9\\
9 & 210 & 84 & 84 & 29 & 33 & 27\\
10 & 462 & 210 & 84 & 63 & 75 & 27\\
11 & 462 & 210 & 210 & 63 & 75 & 69\\
12 & 924 & 462 & 210 & 124 & 153 & 69\\
13 & 924 & 462 & 462 & 124 & 153 & 153\\
14 & 1716 & 924 & 462 & 228 & 291 & 153\\
15 & 1716 & 924 & 924 & 228 & 291 & 306\\
16 & 3003 & 1716 & 924 & 395 & 519 & 306\\
17 & 3003 & 1716 & 1716 & 395 & 519 & 570\\
18 & 5005 & 3003 & 1716 & 654 & 882 & 570\\
19 & 5005 & 3003 & 3003 & 654 & 882 & 999\\
20 & 8008 & 5005 & 3003 & 1040 & 1435 & 999\\
\bottomrule
\end{tabular}
\vspace{2ex}
\caption{Block sizes in the sum-of-squares modeling of the $i=4$ constraints in $E_{2,d,\delta}^*$ with and without symmetry reduction.}
\label{table:blocksizes}
\vspace{-1em}
\end{table}

Now we show by how much we can reduce the largest block size in the semidefinite program $E_{2,d,\delta}^*$ by exploiting the symmetry in the semialgebraic constraints. The matrix 
\[
Q_4 \in S_{\succeq 0}^{\binom{6+\lfloor \delta/2 \rfloor}{6}} 
\]
from \eqref{eq:putinar representation} typically forms the largest block in $E_{2,d,\delta}^*$. This block is larger than any other matrix used in the sum-of-squares modeling, and unless $d$ is much larger than $\delta$, it is larger than any of the $F_\pi$ blocks. As explained at the end of Section~\ref{sec:reduction to sdps}, the polynomial $q_4$ and the set of polynomials in the semialgebraic description of $P_4$ are invariant under the group $\Gamma = \phi_4(S_4)$. The stabilizer subgroup $\Gamma_1$ of $\Gamma$ with respect to the constant $1$ polynomial is isomorphic to $S_4$. The stabilizer subgroup $\Gamma_g$ of $\Gamma$ with respect to a polynomial $g \in G_{4,k}$ is isomorphic to the Klein-Four group for $k = 2$ and to $S_3$ for $k = 3$. In Table~\ref{table:blocksizes} we first show the size of $Q_4$, $Q_{4,2,g}$, and $Q_{4,3,g}$ for different values of $\delta$ for the case where we are not using symmetry. We use a Magma \cite{BosmaCannonPlayoust1997} implementation of the Molien series mentioned above to compute the blocksizes that we get when we do exploit the symmetry. In Table~\ref{table:blocksizes} we then show the largest of these block sizes when block diagonalizing the matrices $Q_4$, $Q_{4,2,g}$, and $Q_{4,3,g}$. In our computation we will use $\delta = 6$ and $\delta = 8$, where we see this gives a $6$ fold reduction in the largest block size in the semidefinite program.

\section{Computations and discussion}
\label{sec:enegy:computations}

Our goal here is to show how the $4$-point bound $E_2^*$ can be computed for Riesz $s$-energy problems on $S^2$, and to observe that these bounds are numerically (with high precision) sharp for $N = 5$ and $s = 1,2,\ldots,7$. To do this we develop a program that can generate the semidefinite programs $E_{2,d,\delta}^*$ from Section~\ref{sec:putinars theorem for invaraint}. We solve these semidefinite programs for $d=6$ and $\delta = 6,8$ with a high precision semidefinite programming solver. Then we check that the optimal objective values given by the solver (consisting of $28$ decimal digits) coincide with the first $28$ decimals of the Riesz $s$-energy 
\begin{equation}
\label{energy:tirangular bipiramid}
\frac{6}{2^{s/2}} + \frac{3}{3^{s/2}} + \frac{1}{4^{s/2}},
\end{equation}
of the triangular bipiramid.

We implement the program in  Julia  \cite{Bezanson2014}, which is a high level language that has a modern type system and JIT compiler for fast execution of the code. We first generate the symmetry adapted system and the zonal matrices as described in Section~\ref{subsect:explicit harmonic analysis}. For this we develop a simple Julia library for sparse multivariate polynomials, which includes generators for the Laplace spherical harmonics in cartesian coordinates and a generator for the Clebsch--Gordan coefficients. To generate high precision solver input we perform all computations in high precision arithmetic using the MPFR library \cite{Fousse2007}.

To compute the polynomials $q_0,\ldots,q_{2t}$ from Section~\ref{sec:reduction to sdps}, we write the polynomials from \eqref{eq:a2polynomial} as polynomials in the inner products. For this we need to solve a large number of instances of the following problem: Suppose $p \in \R[x_1,\ldots,x_i]_{2d}$, where each $x_k$ is a vector of $3$ variables, is $O(3)$-invariant. We want to find a polynomial $q \in \R[u_1,\ldots,u_r]$, with $r = \binom{i+1}{2}$, such that
\[
p(x_1,\ldots,x_i) = q(x_1 \cdot x_1, x_1 \cdot x_2, \ldots, x_{i} \cdot x_i).
\]
As mentioned in Section~\ref{sec:reduction to sdps}, by a theorem from invariant theory such a polynomial $q$ is guaranteed to exist. If $m \in \R[u_1,\ldots,u_r]$ is a monomial, then the polynomial 
\[
m(x_1 \cdot x_1, x_1 \cdot x_2, \ldots, x_i \cdot x_i)
\]
is homogeneous of degree $2\deg(m)$. This means we may assume $\deg(q) \leq d$. We construct a linear system $A x = b$, where the rows of $A$ and $b$ are indexed by the monomials in $3i$ variables of degree at most $2d$, and the columns of $A$ and rows of $x$ by the monomials in $s$ variables up to degree $d$. The size of $A$ increases rapidly: for $i=4$ and $d=6$ it has about $2.7$ million rows. The matrix is sparse, however, where the maximum number of nonzeros in a row is $3^d$, and although this is exponential in $d$, for $d=6$ this is just $729$. We therefore store $A$ in a sparse data structure. For $i=4$, the system $Ax = b$ has more rows than columns. So we use a least squares approach and solve $A^{\sf T} A x = A^{\sf T} b$ instead. The matrix $A$ is in general not of full column rank (in general, $q$ is not unique), which means $A^{\sf T} A$ is singular, so instead we solve the system $(A^{\sf T} A + \varepsilon I) x = A^{\sf T} b$, where $\varepsilon> 0$ is small. Because a high precision solver that can work with sparse data structures is not readily available, we implement a simple pivoting, sparse, high precision, Cholesky factorization algorithm. We use this to compute the Cholesky factorization $A^{\sf T} A + \varepsilon I = P R^{\sf T} R P^{\sf T}$, where $P$ is a permutation matrix, and retrieve $x$ using backwards substitution. Finally, we use the equation relating $p$ and $q$ to verify the correctness of the computed polynomial up to a large number of digits. We then use equation \eqref{eqsymmetrizeq} to symmetrize the polynomial $q$.

We implement a GAP~\cite{BosmaCannonPlayoust1997} script to generate the symmetry adapted systems used in Section~\ref{sec:putinars theorem for invaraint} for the symmetrized sum-of-squares characterizations. For this we need the orthogonal (real unitary) irreducible representations of the relevant stabilizer subgroups of the symmetric groups $S_3$ and $S_4$. Here, the only groups with nonobvious irreducible representations are the symmetric groups themselves, and we use Young's orthogonal form (see, for instance, \cite{Beveren2002}) for these representations.

We develop a semidefinite programming specification library in Julia that allows for modeling polynomial equality constraints involving multivariate polynomials. We use this together with the above mentioned code to generate the semidefinite programs and output these in the SDPA-sparse format \cite{Mittelman}. 

Just as we did for the variables $q_{\alpha,\pm 1}$ from equation \eqref{eq:putinar representation}, we model the free variables $a_0,\ldots,a_5$ from \eqref{eq:etd} as the difference of two $1 \times 1$ positive semidefinite matrices. This, however, means that the resulting semidefinite programs are unbounded, which implies the dual programs are not strictly feasible. That is, the dual programs do not admit feasible solutions where all blocks are positive definite. For many semidefinite programming solvers this is a problem, and, in particular, the high precision solvers SDPA-QD and SDPA-GMP cannot be used in this situation. We therefore add a new parameter $M$, and we constrain the variables used to model the free variables to be at most $M$.  In this way the dual problems become strictly feasible and can be solved with high precision solvers. Notice that for any value of $M$ it is guaranteed we get a lower bound on the energy, and if $M$ is large enough (we use $M = 1000$) this does not weaken the bound. 

We model the polynomial equality constraints \eqref{eq:putinar rep 1} and \eqref{eq:putinar representation} by a linear constraint for each monomial. When we use the additional symmetry from Section~\ref{sec:putinars theorem for invaraint}, then this results in linearly dependent constraints. For some solvers such as the machine precision solver CSDP \cite{Borchers1999} this is not a problem, but solvers from the SDPA family do not work well in this case. Therefore we first remove identical constraints and then use a QR factorization of the constraint matrix of the semidefinite program to remove any remaining linearly dependent constraints.

The solver SDPA-QD works with quad double precision, which means solving a semidefinite program with this solver yields a solution with approximately $28$ decimal digits of precision. In all sharp instances that we compute we verify that at least the first $28$ decimal digits given by the solver agree with the first $28$ decimals of the energy of a configuration. It is important to observe here that to get more digits we do not have have to increase the parameters $d$ and $\delta$ -- we can use the exact same semidefinite program -- but we simply have to increase the precision parameter in the Julia code that generates the solver input and increase the precision parameter in the solver (where we switch to SDPA-GMP for variable precision instead of SDPA-QD). For our purposes, however, there is no reason to use higher precision. Notice that this very different from the sphere packing problem, which was recently solved in dimensions $8$ and $24$ using $2$-point bounds \cites{Viazovska2016a,Cohn2016b}, where one needs to increase the number of terms in the inverse Fourier transform (which corresponds to increasing $d$ in  $E_{t,d,\delta}^*$) for the bound to get closer to the exact optimal value.

As is to be expected, the computation time increases strongly with $\delta$. Computing the bound $E_{2,6,6}^*$ with SDPA-QD takes approximately $10$ minutes (on a standard desktop computer) if we do use the additional symmetry from Section~\ref{sec:putinars theorem for invaraint}, and takes approximately $80$ minutes if we do not use this additional symmetry. Computing the bound $E_{2,6,8}^*$ takes approximately $7$ hours with additional symmetry and $150$ hours without additional symmetry. Here, the value of $s$ itself has virtually no impact on the computation time, however, as $s$ increases we do need to increase the parameter $\delta$ to get a sharp bound. We observe that for $s = 1,\ldots,5$, the bound $E_{2,6,6}^*$ is numerically sharp, and for $s=6,7$ the bound $E_{2,6,8}^*$ is numerically sharp; that is, the $28$ decimal digits given by the solver agree with \eqref{energy:tirangular bipiramid}.

To find new sharp instances, it would also be of interest to compute $E_2$ for energy minimization problems on higher dimensional spheres, or other compact spaces. As observed in \cite{CohnWoo2011}, of particular interest is the case of $24$ particles on $S^3$, as here the $24$-cell seems to be optimal for some potential functions, but for other potentials the optimal configurations seem to be more exotic. It would be remarkable if $E_2$ would be universally sharp for $24$ particles on $S^3$. It would also be interesting to use the techniques developed in this paper to compute $4$-point bounds for packing problems such as spherical code problems on $S^{n-1}$. Of particular interest would be the spherical code problem $A(4,\arccos(1/3))$, where a construction of $14$ points exists, and where the $2$ and $3$-point bounds give the upper bounds $16$ and $15$ \cite{BachocVallentin2008}.

\appendix

\section{Invariant positive definite kernels} 
\label{sec:invariant posdef kernels}

In this appendix we prove theorems concerning the ``simulateneous block diagonalization'' of invariant positive definite kernels. These results are used in Section~\ref{subsecSymmetry adapted systems and zonal matrices}, where we also give more background information and introduce some of the notation used in this appendix. 

The first theorem characterizes the extreme rays of the cone of invariant positive definite kernels. As a special case, this shows 
\[
\partial_r \big(\Ccal(X \times X; \C)_{\succeq 0}\big) = \Big\{ f \otimes \bar f : f \in \Ccal(X; \C)\Big\},
\]
where we use the notation $\partial_r$ for the extreme rays of a cone. This theorem and its proof are a generalization to kernels of a result in harmonic analysis about functions of positive type as given in \cite{Folland1995}. In this appendix $X$ is a compact metric space with a continuous action of a compact group $\Gamma$ (in the following theorem, however, we may assume $X$ and $\Gamma$ to be locally compact).

\begin{theorem}
\label{kernels:theorem:extreme rays}
We have
\[
\partial_r\big(\Ccal(X \times X; \C)_{\succeq 0}^\Gamma\big) = \Big\{K_\varphi : \pi \in \hat\Gamma, \, \varphi \in \mathrm{Hom}_\Gamma(X,\Hcal_\pi)\Big\},
\]
where $\Hcal_\pi$ is the Hilbert space of the representation $\pi \in \hat\Gamma$, where $\mathrm{Hom}_\Gamma(X,\Hcal_\pi)$ is the space of $\Gamma$-equivariant maps $X \to \Hcal_\pi$, and where $K_\varphi$ is defined by 
\[
K_\varphi(x,y) = \big\langle \varphi(x), \varphi(y) \big\rangle \quad \text{for all} \quad  x,y \in X.
\]
\end{theorem}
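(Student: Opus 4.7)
The plan is to prove the theorem by developing a kernel version of the GNS (reproducing kernel Hilbert space) construction, which will let us translate the extreme-ray condition for $K$ into an irreducibility condition for a representation of $\Gamma$, so that Schur's lemma supplies the irreducible summand $\pi \in \hat\Gamma$ and the equivariant map $\varphi$. The argument is a direct generalization of the classical harmonic-analytic fact that extreme rays in the cone of continuous functions of positive type on a locally compact group correspond to matrix coefficients of irreducible unitary representations (see \cite{Folland1995}).

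First, given any $K \in \Ccal(X \times X; \C)_{\succeq 0}^\Gamma$, I would build the associated Hilbert space $\Hcal_K$ by endowing the free complex vector space on $X$ with the positive semidefinite form $\langle \sum c_x \delta_x, \sum d_y \delta_y \rangle_K = \sum_{x,y} c_x \overline{d_y} K(x,y)$, quotienting by its null space, and taking the Hilbert completion. Setting $\varphi_K(x) = [\delta_x]$ gives a map with $K(x,y) = \langle \varphi_K(x), \varphi_K(y)\rangle$, and the identity $\|\varphi_K(x) - \varphi_K(y)\|^2 = K(x,x) + K(y,y) - 2\operatorname{Re} K(x,y)$ combined with continuity of $K$ shows $\varphi_K$ is continuous. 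The span of $\varphi_K(X)$ is dense in $\Hcal_K$ by construction. Since $K$ is $\Gamma$-invariant, the rule $\pi_K(\gamma)\varphi_K(x) = \varphi_K(\gamma x)$ extends by linearity and continuity to a well-defined unitary representation $\pi_K$ of $\Gamma$ on $\Hcal_K$ (well-definedness and isometry both come directly from $\Gamma$-invariance of $K$), and $\varphi_K$ is tautologically $\Gamma$-equivariant.

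Next I would establish the key correspondence: kernels $K' \in \Ccal(X \times X; \C)_{\succeq 0}^\Gamma$ with $K - K' \in \Ccal(X \times X; \C)_{\succeq 0}^\Gamma$ are in bijection with self-adjoint operators $T$ on $\Hcal_K$ satisfying $0 \preceq T \preceq I$ and commuting with $\pi_K$, the bijection being $K'(x,y) = \langle T\varphi_K(x), \varphi_K(y)\rangle$. The forward direction defines $T$ on the dense subspace $\mathrm{span}\,\varphi_K(X)$ by $T\bigl(\sum c_i \varphi_K(x_i)\bigr) = \sum c_i \varphi_K'(x_i)$ (where $\varphi_K'$ belongs to the GNS data for $K'$); boundedness, positivity, and the bound $T \preceq I$ follow from applying the scalar Cauchy--Schwarz style inequality $|\sum c_i \overline{c_j} K'(x_i,x_j)| \leq \sum c_i \overline{c_j} K(x_i,x_j)$, and $\Gamma$-equivariance of $T$ follows from invariance of both $K$ and $K'$. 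Consequently $K$ lies on an extreme ray if and only if every such $T$ is a scalar, which by Schur's lemma is equivalent to $\pi_K$ being irreducible. This step, verifying that the bijection is well-defined and that $T$ is bounded and commutes with $\pi_K$, is the main technical obstacle; all other pieces are routine.

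Finally I would assemble the two directions of the theorem. If $K$ lies on a nonzero extreme ray, then $\pi_K$ is irreducible, hence equivalent via a $\Gamma$-equivariant unitary $U$ to some $\pi \in \hat\Gamma$; the composition $\varphi = U \circ \varphi_K$ lies in $\mathrm{Hom}_\Gamma(X, \Hcal_\pi)$ and satisfies $K = K_\varphi$. Conversely, given $\pi \in \hat\Gamma$ and nonzero $\varphi \in \mathrm{Hom}_\Gamma(X, \Hcal_\pi)$, the kernel $K_\varphi(x,y) = \langle \varphi(x), \varphi(y)\rangle$ is continuous, positive definite, and $\Gamma$-invariant; the closure of $\mathrm{span}\,\varphi(X)$ is a nonzero $\Gamma$-invariant closed subspace of $\Hcal_\pi$ and hence equals $\Hcal_\pi$ by irreducibility, so $\Hcal_{K_\varphi}$ is naturally unitarily equivalent to $\Hcal_\pi$ and irreducible, placing $K_\varphi$ on an extreme ray by the previous paragraph. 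This completes both inclusions.
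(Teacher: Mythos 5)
Your proposal is correct and uses essentially the same machinery as the paper: the GNS/reproducing-kernel construction to manufacture a unitary representation $\pi_K$ on $\Hcal_K$ together with an equivariant $\varphi_K$ with dense span, followed by Schur's lemma applied to the commutant of $\pi_K$. The one genuine difference is organizational: you run both implications through the single operator correspondence ``$K' \leq K$ dominated kernels $\leftrightarrow$ $0 \preceq T \preceq I$ in the commutant of $\pi_K$'' and then invoke Schur's lemma once, whereas the paper proves the forward implication (extreme $\Rightarrow$ irreducible) by a bare-hands decomposition $\Hcal_\pi = \Mcal_1 \oplus \Mcal_2$ and uses the operator/Schur argument only for the converse; your version is a clean streamlining and buys you a slightly tighter narrative at the cost of having to state and verify the bijection in full.

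One small slip worth fixing: as written, the formula $T\bigl(\sum c_i \varphi_K(x_i)\bigr) = \sum c_i \varphi_{K'}(x_i)$ defines a map $\Hcal_K \to \Hcal_{K'}$, not an operator on $\Hcal_K$. What you want is either to define $T$ on $\Hcal_K$ implicitly by $\langle T\varphi_K(x), \varphi_K(y)\rangle_K = K'(x,y)$ — obtained by extending the bounded $\Gamma$-invariant Hermitian form $\langle \cdot,\cdot\rangle_{K'}$ to $\Hcal_K$ and applying Riesz representation, as the paper does — or to first define the contraction $S\colon \Hcal_K \to \Hcal_{K'}$ by $S\varphi_K(x) = \varphi_{K'}(x)$ and then take $T = S^*S$. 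With that repaired, the rest of the argument goes through exactly as you outline.
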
 
\begin{proof}
Let $K$ be a nonzero kernel in $\smash{\Ccal(X \times X; \C)_{\succeq 0}^\Gamma}$. As shown in \cite{CohnWoo2011}, we can use a Gelfand-Naimark-Segal type construction to build a unitary representation $\pi \colon \Gamma \to U(\Hcal_\pi)$ and a nonzero function $\varphi \in \mathrm{Hom}_\Gamma(X,\Hcal_\pi)$, so that
\[
K(x, y) = \big\langle \varphi(x), \varphi(y) \big\rangle \quad \text{for all} \quad x,y \in X.
\]
Indeed, let $\C^X$ be the complex vector space of formal linear combinations of elements in $X$, and define the subspace $N = \mathrm{span} \{ x \in X : K(x,x) = 0\}$. Define an inner product on the quotient space $\C^X/N$ by setting $\langle x+N, y+N \rangle = K(x,y)$ for all $x,y \in X$ and extending by (anti)linearity. The action of $\Gamma$ on $X$ extends to the homomorphism $\pi \colon \Gamma \to U(\Hcal_\pi)$, where $\Hcal_\pi$ is the Hilbert space obtained by completing $\C^X/N$ in the metric defined by the inner product $\langle \cdot, \cdot \rangle$. Here $\pi(\gamma)$ is an isometry because $K$ is $\Gamma$-invariant, and because $\pi(\gamma)$ is invertible, it is a unitary operator.
Since $\langle \pi(\gamma) x + N, y + N \rangle = K(\gamma x, y)$, it follows from $K$ being continuous and the action of $\Gamma$ on $X$ being continuous, that the map $\gamma \to \langle \pi(\gamma) x + N, y + N\rangle$ is continuous. So $\pi$ is a unitary representation. We define the $\Gamma$-equivariant map $\varphi \colon X \to \Hcal$ by $\varphi(x) = x+N$. This map is continuous, because
\[
\|\varphi(y) - \varphi(x)\|^2 
\leq K(x,x)+K(y,y)-K(x,y)-K(y,x),
\]
and, moreover, $\varphi$ is injective and has dense span.

Now assume $K$ spans an extreme ray. If $\pi$ is reducible, then $\Hcal_\pi$ admits a nontrivial orthogonal decomposition $\Mcal_1 \oplus \Mcal_2$ into $\Gamma$-invariant subspaces. Let $P_i \colon \Hcal \to \Mcal_i$ be the projector onto $\Mcal_i$, and set $\varphi_i = P_i \circ \varphi$. Let 
\[
K_i(x,y) = \langle \varphi_i(x), \varphi_i(y) \rangle \quad \text{for} \quad x,y \in X,
\]
so that $K = K_1 + K_2$. Now we show the kernels $K_1$ and $K_2$ do not lie on the same ray: If $K_2 = |c|^2 K_1$ for some nonzero $c \in \C$, then we can define a $\Gamma$-equivariant unitary operator $T \colon \Mcal_2 \to \Mcal_1$ by setting $T(\varphi_2(x)) = c \, \varphi_1(x)$ for all $x \in X$. But this implies $\varphi = \varphi_1 + \varphi_2 = c^{-1} T \circ \varphi_2 + \varphi_2$, and this contradicts with $\varphi$ being injective and having dense span in $\Hcal_\pi$. Therefore, $\pi$ must be irreducible.

Now assume $K$ is a nonzero kernel in $\smash{\Ccal(X \times X; \C)_{\succeq 0}^\Gamma}$, such that
\[
K(x, y) = \big\langle \varphi(x), \varphi(y) \big\rangle, \quad \text{for all} \quad x,y \in X,
\]
for some irreducible unitary representation $\pi \colon \Gamma \to U(\Hcal_\pi)$ and $\varphi \in \mathrm{Hom}_\Gamma(X,\Hcal_\pi)$.
Let $K_1$ and $K_2$ be two kernels in $\Ccal(X \times X; \C)_{\succeq 0}^\Gamma$ with $K = K_1 + K_2$.
We have $K_1(x,x) = K(x,x)-K_2(x,x)\leq K(x,x)$ for all $x \in X$, so 
\[
|K_1(x,y)| \leq K_1(x,x)^{1/2}K_1(y,y)^{1/2} \leq K(x,x)^{1/2}K(y,y)^{1/2} \quad \text{for all} \quad x,y\in X.
\]
This means we can define the bounded Hermitian form $\langle \cdot, \cdot \rangle_1$ on $\C^X / N$ by setting 
\[
\langle \varphi(x) + N, \varphi(y) + N \rangle_1 = K_1(x,y)
\]
and extending by (anti)linearity. The form $\langle \cdot, \cdot \rangle_1$ is continuous since it is bounded, so we can extend it to the Hilbert space $\Hcal_\pi$. By the Riesz representation theorem for Hilbert spaces there is a bounded self-adjoint operator $T$ on $\Hcal_\pi$ such that 
\[
\langle \varphi(x) + N, \varphi(y) + N \rangle_1 = \langle T (\varphi(x) + N) , \varphi(y)+N \rangle \quad \text{for all} \quad x,y \in X.
\]
This operator is $\Gamma$-equivariant: For all $x,y \in X$ and $\gamma \in \Gamma$ we have
\begin{align*}
\langle T \pi(\gamma) (\varphi(x)+N), \varphi(y)+N \rangle
&= \langle \varphi(\gamma^{-1}x)+N, \varphi(y)+N\rangle_1\\
&=  K_1(\gamma^{-1}x, y)
= K_1(x, \gamma y) \\
&= \langle \varphi(x), \varphi(\gamma y)\rangle_1
= \langle T \varphi(x), \pi(\gamma^{-1})\varphi(y)\rangle\\
&= \langle \pi(\gamma) T \varphi(x), \varphi(y)\rangle.
\end{align*}
Since $\pi$ is irreducible, Schur's lemma states there is a $c \in \C$ such that $T = cI$. But this means that 
\[
K_1(x,y)= \langle \varphi(x) +N, \varphi(y)+N \rangle_1 = \langle T(\varphi(x)+N), \varphi(y)+N \rangle = cK(x,y),
\]
for all $x,y \in X$, and hence $K_1 = cK$ and $K_2 = (1-c)K$, which shows that $K$ spans an extreme ray. 
\end{proof}

Next, we prove the existence of a symmetry adapted system. For this we first need a few lemmas.

\begin{lemma}
\label{lem:existence of strictly positive measure}
The space $X$ admits a strictly positive, $\Gamma$-invariant, Radon probability measure.
\end{lemma}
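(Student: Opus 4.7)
The plan is to first construct a strictly positive Radon probability measure on $X$ without worrying about $\Gamma$-invariance, and then average over $\Gamma$ with respect to its normalized Haar measure. Since $X$ is a compact metric space, it is separable, so I would pick a countable dense subset $\{x_n\}_{n \in \N} \subseteq X$ and define
\[
\mu = \sum_{n=1}^\infty 2^{-n}\, \delta_{x_n},
\]
which is a convergent sum of Dirac measures with total mass $1$, hence a Radon probability measure on $X$. Strict positivity of $\mu$ is immediate: any nonempty open $U \subseteq X$ contains some $x_k$ by density, so $\mu(U) \geq 2^{-k} > 0$.

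Next, I would symmetrize $\mu$ using the normalized Haar measure $d\gamma$ on the compact group $\Gamma$. Define a positive linear functional on $\Ccal(X)$ by
\[
\bar\mu(f) = \int_\Gamma \int_X f(\gamma x) \, d\mu(x)\, d\gamma, \qquad f \in \Ccal(X),
\]
which is well-defined since $\gamma \mapsto \int_X f(\gamma x)\, d\mu(x)$ is continuous (the action of $\Gamma$ on $X$ is continuous and $f$ is uniformly continuous on the compact space $X$). By the Riesz representation theorem this gives a Radon measure on $X$, and $\bar\mu(1) = 1$. Left-invariance of Haar measure yields $\bar\mu(L(\gamma_0)f) = \bar\mu(f)$ for all $\gamma_0 \in \Gamma$, so $\bar\mu$ is $\Gamma$-invariant.

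It remains to verify strict positivity of $\bar\mu$. Given a nonempty open set $U \subseteq X$, by Urysohn's lemma I would pick $f \in \Ccal(X)$ with $0 \leq f \leq 1_U$ and $f \not\equiv 0$, and let $W = \{y \in X : f(y) > 0\} \subseteq U$, which is nonempty open. Then $\bar\mu(U) \geq \bar\mu(f) = \sum_n 2^{-n} \int_\Gamma f(\gamma x_n)\, d\gamma$, so it suffices to produce an $n$ with $\int_\Gamma f(\gamma x_n)\, d\gamma > 0$. The set $\Gamma W = \bigcup_{\gamma \in \Gamma} \gamma W$ is open and nonempty, hence contains some $x_n$ by density. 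For this $n$, the set $\{\gamma \in \Gamma : \gamma x_n \in W\}$ is a nonempty open subset of $\Gamma$, hence has strictly positive Haar measure, and on this set $f(\gamma x_n) > 0$, giving $\int_\Gamma f(\gamma x_n)\, d\gamma > 0$.

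The only place requiring any care is the strict positivity of the symmetrization, which is the main subtlety; everything else is routine. I do not anticipate further obstacles, as both the separability of $X$ and the existence of normalized Haar measure on the compact group $\Gamma$ are standard.
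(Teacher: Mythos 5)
Your proof is correct and takes essentially the same route as the paper: a weighted sum of Dirac masses at a countable dense subset, followed by averaging over the Haar measure of $\Gamma$. You spell out more detail than the paper's terse argument, in particular the verification that strict positivity survives the averaging (via the nonempty open set $\{\gamma \in \Gamma : \gamma x_n \in W\}$ having positive Haar measure), which the paper leaves implicit but which is the one step that actually requires an argument.
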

\begin{proof}
Let $\{x_i\}$ be a dense sequence in $X$ and let $\{a_i\}$ be a sequence of strictly positive numbers that sum to one. Define a Borel probability measure $\mu$ by setting
$
\mu(U) = \sum_{i : x_i \in U} a_i
$ 
for all Borel sets $U$.  The desired measure is obtained by averaging $\mu$ over the Haar measure of $\Gamma$.
\end{proof}

We say that a sequence $\{I_n\}$ of kernels in $\Ccal(X \times X; \C)$ is an \emph{approximate identity} of $X$ if $\|T_{I_n} f -f\|_\infty \to 0$ as $n \to \infty$ for each $f \in \Ccal(X; \C)$, where
\[
T_K \colon \Ccal(X; \C) \to \Ccal(X; \C), \, T_Kf(x) = \int K(x,y) f(y) \, d\mu(y),
\]
and where $\mu$ is some fixed strictly positive $\Gamma$-invariant Radon probability measure.

\begin{lemma}\label{lem:approximate identity}
The space $X$ admits an approximate identity $\{I_n\}$, where each kernel $I_n$ may be assumed to be real-valued, symmetric, and $\Gamma$-invariant.
\end{lemma}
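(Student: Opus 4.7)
The plan is to construct the approximate identity in two stages: first build a real-valued, symmetric kernel that acts approximately as the identity without worrying about $\Gamma$-invariance, then symmetrize over $\Gamma$ and verify that the symmetrization still works.

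For the first stage I would use a partition-of-unity construction. Fix the strictly positive $\Gamma$-invariant probability measure $\mu$ from Lemma~\ref{lem:existence of strictly positive measure}. For each $n \in \N$, cover the compact metric space $X$ by finitely many open balls of radius $1/n$ and choose a continuous partition of unity $\{\psi_i^{(n)}\}_{i=1}^{k_n}$ subordinate to this cover. Strict positivity of $\mu$ makes each $c_i^{(n)} = \int \psi_i^{(n)}\, d\mu$ strictly positive, so I can set
\[
J_n(x,y) = \sum_{i=1}^{k_n} \frac{\psi_i^{(n)}(x)\,\psi_i^{(n)}(y)}{c_i^{(n)}}.
\]
This kernel is continuous, real-valued, and symmetric. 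Moreover $T_{J_n} f(x) = \sum_i \psi_i^{(n)}(x) \cdot \bigl(\int \psi_i^{(n)} f\, d\mu / c_i^{(n)}\bigr)$ is, at each $x$, a convex combination of local $\psi_i^{(n)}$-weighted averages of $f$ over sets of diameter at most $2/n$; so $\|T_{J_n} f - f\|_\infty$ is bounded by the modulus of continuity $\omega_f(2/n)$, which tends to $0$ since $f$ is uniformly continuous on the compact space $X$.

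For the second stage I would set $I_n(x,y) = \int_\Gamma J_n(\gamma x, \gamma y)\, d\gamma$. Continuity follows from continuity of the action and compactness of $\Gamma$, while real-valuedness, symmetry in $(x,y)$, and $\Gamma$-invariance are immediate. To check that $T_{I_n}$ is still approximately the identity, I would apply Fubini and substitute $y' = \gamma y$, using the $\Gamma$-invariance of $\mu$, to obtain
\[
T_{I_n} f(x) = \int_\Gamma (T_{J_n} L_\gamma f)(\gamma x)\, d\gamma, \qquad L_\gamma f(y) = f(\gamma^{-1} y).
\]
Therefore $\|T_{I_n} f - f\|_\infty \leq \sup_{\gamma \in \Gamma} \|T_{J_n} L_\gamma f - L_\gamma f\|_\infty$.

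The one nonroutine point, and the main thing to be careful about, is that this supremum over $\gamma$ tends to $0$. The bound from the first stage involves the modulus of continuity of the function being smoothed, so I need the family $\{L_\gamma f : \gamma \in \Gamma\}$ to be equicontinuous with a modulus independent of $\gamma$. This will follow from the uniform continuity of the action map $\Gamma \times X \to X$ on the compact product $\Gamma \times X$: it supplies a modulus $\omega$ such that $d(\gamma^{-1} y_1, \gamma^{-1} y_2) \leq \omega(d(y_1, y_2))$ uniformly in $\gamma$, whence $\omega_{L_\gamma f} \leq \omega_f \circ \omega$ uniformly in $\gamma$. Plugging this back into the first-stage estimate yields $\|T_{I_n} f - f\|_\infty \leq \omega_f(\omega(2/n)) \to 0$, completing the construction.
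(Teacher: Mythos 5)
Your proof is correct and follows the same basic strategy as the paper (partition-of-unity kernel, then average over $\Gamma$), but the execution differs in two noteworthy ways. First, your normalization is cleaner: you divide by $c_i^{(n)} = \int \psi_i^{(n)}\, d\mu$, so that $\int J_n(x,y)\, d\mu(y) = 1$ exactly and the error $T_{J_n}f(x) - f(x)$ is literally a convex combination of deviations of local weighted averages from $f(x)$, yielding the one-line bound $\omega_f(2/n)$. The paper instead inductively carves compact sets $C_i^n \subseteq U_i^n$ with $\mu(U_i^n \setminus C_i^n) \le \mu(C_i^n)/n$, arranges the partition of unity to be identically $1$ on each $C_i^n$, normalizes by $\mu(C_i^n)$, and then estimates in two pieces ($A$ and $B$). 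Your construction achieves the same thing with less machinery. Second, and more substantively, you treat the symmetrization step with care that the paper omits: the paper simply asserts that $I_n = \int_\Gamma K_n(\gamma\cdot, \gamma\cdot)\, d\gamma$ remains an approximate identity, but as you correctly identify, this requires the bound $\|T_{K_n} L_\gamma f - L_\gamma f\|_\infty \to 0$ to hold \emph{uniformly} in $\gamma$, which in turn hinges on equicontinuity of the orbit $\{L_\gamma f : \gamma \in \Gamma\}$. Your argument via uniform continuity of the action map on the compact set $\Gamma \times X$ fills this gap cleanly (it is also what makes your $\omega_f(2/n)$-type estimate propagate through the symmetrization, since the modulus becomes $\omega_f \circ \omega$ independently of $\gamma$). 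Both proofs are valid; yours is somewhat tighter.
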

\begin{proof}
Let $d$ be a compatible metric on $X$. Let $\{U_i^1\}, \{U_i^2\}, \ldots$ be a sequence of finite open covers of $X$ such that for all $i$ and $n$ the diameter of $U_i^n$ is at most $1/n$. For each $i$ and $n$ inductively select a compact set $C_i^n \subseteq U_i^n$ such that \[\mu(U_i^n \setminus C_i^n) \leq \mu(C_i^n)/n,\] (this is possible by inner regularity of $\mu$), and remove $C_i^n$ from the sets $U_j^n$ for $j \neq i$. We then have $C_i^n \cap U_{i'}^n = \emptyset$ for all $n$ and all distinct $i$ and $i'$. 

Let $\{p_i^n\}_i$ be a partition of unity subordinate to the cover $\{U_i^n\}_i$, so that the restriction of $p_i^n$ to $C_i^n$ is identically $1$, and define the kernel $K_n \in \Ccal(X \times X)$ by the finite sum
\[
K_n(x, y) = \sum_i \frac{p_i^n(x)p_i^n(y)}{\mu(C_i^n)}.
\]

Let $f \in \Ccal(X; \C)$ and $\varepsilon > 0$. For large enough $n$ we have
\[
\mu(U_i^n \setminus C_i^n) \leq \frac{\mu(C_i^n) }{2\|f\|_\infty}\varepsilon \quad \text{and} \quad \sup_{x,y\in C_i^n} |f(x)-f(y)| \leq \frac{1}{2} \varepsilon\quad \text{for all} \quad i.
\]
Then for each $x \in X$,
\[
|T_{K_n} f(x) - f(x)| = \left| \sum_i \int_{U_i^n} \frac{p_i^n(x)p_i^n(y)}{\mu(C_i)}f(y) \,d\mu(y) - f(x) \right| \leq A + B,
\]
where
\begin{align*}
A &= \left|\sum_i \int_{C_i^n} \frac{p_i^n(x)p_i^n(y)}{\mu(C_i^n)}f(y) \,d\mu(y) - f(x) \right|\\
&= \left|\sum_i \frac{p_i^n(x)}{\mu(C_i^n)} \int_{C_i^n} |f(y) - f(x) |\,d\mu(y)\right| \leq \sum_i p_i^n(x) \frac{\varepsilon}{2} = \frac{\varepsilon}{2}
\end{align*}and
\begin{align*}
B &= \left|\sum_i \int_{U_i^n\setminus C_i^n} \frac{p_i^n(x)p_i^n(y)}{\mu(C_i^n)}f(y) \,d\mu(y)\right|\\
&= \sum_i \frac{p_i^n(x)}{\mu(C_i^n)} \int_{U_i^n\setminus C_i^n} p_i^n(y) |f(y)|\, d\mu(y)
= \sum_i p_i^n(x) \frac{\mu(U_i^n \setminus C_i^n)}{\mu(C_i^n)} \|f\|_\infty \leq \frac{\varepsilon}{2}.
\end{align*}
So, for each $\varepsilon > 0$ we have $\|T_{K_n} f- f\|_\infty \leq \varepsilon$ for sufficiently large $n$, which means that the sequence $\{K_n\}$ is an approximate identity. 
	
Let 
\[
I_n(x,y) = \int_\Gamma K_n(\gamma x, \gamma y) \,d\gamma,
\]
where we integrate against the normalized Haar measure of $\Gamma$. Then $\{I_n\}$ is an approximate identity, and each $I_n$ is real-valued, symmetric, and $\Gamma$-invariant.
\end{proof}

We need the following part of the Peter--Weyl theorem.  A proof for the case where a compact group acts on itself can be found in for instance \cite{Folland1995}, and a generalization of this to the setting of a compact group acting on a compact metric space can be found in \cite{Laat2016}.

\begin{lemma}\label{lem:peter-weyl core}
The space $\Ccal(X; \C)$ is equal to the closure of the sum of its finite dimensional $\Gamma$-invariant subspaces.
\end{lemma}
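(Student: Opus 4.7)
My plan is to use the approximate identity from the previous lemma together with the spectral theorem for compact self-adjoint operators and Mercer's theorem to produce uniform approximations living in finite-dimensional $\Gamma$-invariant subspaces.

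First, I would check that the approximate identity $\{I_n\}$ produced in Lemma~\ref{lem:approximate identity} can be taken to be positive definite: the kernels $K_n(x,y) = \sum_i p_i^n(x) p_i^n(y)/\mu(C_i^n)$ in that construction are manifestly positive definite, and positive definiteness is preserved by averaging over $\Gamma$. So each $I_n$ is a continuous, real-valued, symmetric, $\Gamma$-invariant, positive definite kernel, and the convolution operator $T_{I_n}$ on $L^2(X,\mu;\C)$ is compact, self-adjoint, positive, and $\Gamma$-equivariant (the latter because both $I_n$ and $\mu$ are $\Gamma$-invariant).

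Next, I would invoke the spectral theorem for compact self-adjoint operators to decompose $L^2(X,\mu;\C)$ as the Hilbert direct sum of $\ker T_{I_n}$ and the nonzero eigenspaces $E_\lambda^{(n)} = \ker(T_{I_n} - \lambda I)$, each of which is finite dimensional. Because $T_{I_n}$ is $\Gamma$-equivariant, each $E_\lambda^{(n)}$ is $\Gamma$-invariant, and because every element of $E_\lambda^{(n)}$ equals $\lambda^{-1} T_{I_n}$ applied to itself and $T_{I_n}$ maps $L^2$ into $\Ccal(X;\C)$ (since $I_n$ is continuous and $X$ is compact), each $E_\lambda^{(n)}$ sits inside $\Ccal(X;\C)$. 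Thus the $E_\lambda^{(n)}$ are exactly finite-dimensional $\Gamma$-invariant subspaces of $\Ccal(X;\C)$ of the sort we want.

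Now fix $f \in \Ccal(X;\C)$ and $\varepsilon > 0$. By the approximate identity property, choose $n$ with $\|T_{I_n} f - f\|_\infty < \varepsilon/2$. By Mercer's theorem applied to the continuous positive definite kernel $I_n$, we have the absolutely-uniform expansion
\[
I_n(x,y) = \sum_{i=1}^\infty \lambda_i^{(n)} e_i^{(n)}(x) \overline{e_i^{(n)}(y)},
\]
where $\{e_i^{(n)}\}$ is an orthonormal system of eigenfunctions. Writing $I_n^{(N)}$ for the partial sum up to $N$, the uniform convergence $\|I_n - I_n^{(N)}\|_\infty \to 0$ together with the estimate $\|T_{I_n}f - T_{I_n^{(N)}}f\|_\infty \le \|I_n - I_n^{(N)}\|_\infty \|f\|_\infty$ lets me pick $N$ with $\|T_{I_n^{(N)}}f - T_{I_n}f\|_\infty < \varepsilon/2$, hence $\|T_{I_n^{(N)}}f - f\|_\infty < \varepsilon$. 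Finally $T_{I_n^{(N)}} f = \sum_{i\le N} \lambda_i^{(n)} \langle f, e_i^{(n)}\rangle e_i^{(n)}$ lies in the sum of those finitely many full eigenspaces $E_\lambda^{(n)}$ that contain any $e_i^{(n)}$ with $i \le N$; this sum is a finite-dimensional $\Gamma$-invariant subspace of $\Ccal(X;\C)$, proving density.

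The main technical obstacle is getting uniform (rather than just $L^2$) convergence of the Mercer-type partial sums; this is precisely why I arrange for each $I_n$ to be positive definite, so that Mercer's theorem supplies absolutely-uniform convergence of the kernel expansion and, as a consequence, uniform convergence of $T_{I_n^{(N)}}f \to T_{I_n}f$. A secondary point requiring care is that to obtain $\Gamma$-invariance one must not truncate in the middle of an eigenspace but rather complete the index set to full eigenspaces, which is harmless since each $E_\lambda^{(n)}$ is finite dimensional.
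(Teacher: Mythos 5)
The paper does not supply a proof of this lemma; it refers to Folland's book for the case of a compact group acting on itself and to the author's thesis for the general setting, so there is no in-paper argument to compare against. Your proof is correct, and it is precisely the standard Peter--Weyl route adapted to a compact group acting on a compact metric space: approximate identity from Lemma~\ref{lem:approximate identity}, compactness and $\Gamma$-equivariance of $T_{I_n}$, spectral theorem, and a truncation step. One remark on economy: the only reason you upgrade the approximate identity to be positive definite is to invoke Mercer's theorem for uniform convergence of the truncated expansion, but Mercer is heavier than you need. Since $I_n$ is continuous on a compact space, $T_{I_n}$ is bounded as an operator from $L^2(X,\mu;\C)$ into $\Ccal(X;\C)$ with the uniform norm (by Cauchy--Schwarz, $|T_{I_n}g(x)| \le \|I_n\|_\infty \|g\|_{L^2}$ since $\mu$ is a probability measure). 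Letting $P_N$ denote the orthogonal projection onto $\mathrm{span}\{e_1^{(n)},\ldots,e_N^{(n)}\}$ and $Q$ the projection onto $(\ker T_{I_n})^\perp$, the $L^2$ convergence $P_Nf \to Qf$ together with $T_{I_n}(f - Qf) = 0$ gives $\|T_{I_n}(P_Nf) - T_{I_n}f\|_\infty \to 0$ directly; and $T_{I_n}(P_Nf) = \sum_{i\le N}\lambda_i^{(n)}\langle f, e_i^{(n)}\rangle e_i^{(n)}$ already lies in a finite sum of eigenspaces. This closes the density argument without Mercer and without requiring positivity of the approximate identity, although your observation that the construction in Lemma~\ref{lem:approximate identity} does in fact produce positive definite kernels is itself correct and costs nothing.
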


We also need the following variation on the Schur orthogonality relations, for which a proof can be found in \cite{Vallentin2008}.

\begin{lemma}
\label{lem:invariant inner product}
Let $\pi \colon \Gamma \to U(\Hcal)$ be a unitary representation, and let $\langle \cdot, \cdot \rangle$ be a $\Gamma$-invariant sesquilinear from on $\Hcal$. Let $\Mcal$ and $\Mcal'$ be finite-dimensional, irreducible subrepresentations with orthonormal bases $\{e_i\}$ and $\{e_j'\}$.
\begin{enumerate}
\item If $\Mcal$ and $\Mcal'$ are not equivalent, then $\langle e_i, e_j'\rangle = 0$ for all $i$ and $j$.
\item If there exists a $\Gamma$-equivariant bijection $T \colon \Mcal \to \Mcal'$ such that $Te_i=e_i'$ for all $i$, then there is a $c \in \mathbb{C}$ such that $\langle e_i, e_j' \rangle = c \delta_{i,j}$ for all $i$ and $j$.
\end{enumerate}  
\end{lemma}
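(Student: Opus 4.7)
My plan is to translate the invariant sesquilinear form $\langle\cdot,\cdot\rangle$ into a $\Gamma$-equivariant linear operator between $\Mcal$ and $\Mcal'$, and then invoke Schur's lemma. Concretely, for each $v \in \Mcal$, the map $v' \mapsto \overline{\langle v, v'\rangle}$ is a linear functional on the finite-dimensional Hilbert space $\Mcal'$, so by the Riesz representation theorem there is a unique $Av \in \Mcal'$ with
\[
\langle v, v'\rangle = (Av, v')_{\Hcal} \quad \text{for all } v' \in \Mcal',
\]
where $(\cdot,\cdot)_{\Hcal}$ denotes the ambient Hilbert-space inner product (restricted to $\Mcal'$). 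The resulting map $A \colon \Mcal \to \Mcal'$ is linear because $\langle \cdot, v'\rangle$ is linear in the first argument.

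The key step is then to check that $A$ is $\Gamma$-equivariant. Using $\Gamma$-invariance of the form together with unitarity of $\pi$, for any $v \in \Mcal$, $v' \in \Mcal'$, and $\gamma \in \Gamma$, I compute
\[
(A \pi(\gamma) v, v')_{\Hcal} = \langle \pi(\gamma)v, v'\rangle = \langle v, \pi(\gamma^{-1}) v'\rangle = (Av, \pi(\gamma^{-1}) v')_{\Hcal} = (\pi(\gamma) A v, v')_{\Hcal}.
\]
Since $v' \in \Mcal'$ is arbitrary, this forces $A\pi(\gamma) = \pi(\gamma) A$ on $\Mcal$.

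With equivariance in hand, both parts follow immediately. For part (1), if $\Mcal$ and $\Mcal'$ are inequivalent, Schur's lemma gives $A = 0$, hence $\langle e_i, e_j'\rangle = (Ae_i, e_j')_{\Hcal} = 0$ for all $i,j$. For part (2), the equivariant bijection $T \colon \Mcal \to \Mcal'$ with $Te_i = e_i'$ is unitary (it sends an orthonormal basis to an orthonormal basis), and $T^{-1} A \colon \Mcal \to \Mcal$ is a $\Gamma$-equivariant endomorphism of the irreducible representation $\Mcal$, so by Schur's lemma $T^{-1}A = cI$ for some $c \in \C$, i.e.\ $A = cT$. Therefore
\[
\langle e_i, e_j'\rangle = (Ae_i, e_j')_{\Hcal} = c(Te_i, e_j')_{\Hcal} = c(e_i', e_j')_{\Hcal} = c\,\delta_{i,j}.
\]

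There is no real obstacle here; the only thing to handle with care is the bookkeeping between the two bilinear/sesquilinear structures, namely the ambient Hilbert inner product $(\cdot,\cdot)_{\Hcal}$ (with respect to which the $e_i$ and $e_j'$ are orthonormal and $\pi$ is unitary) and the given invariant sesquilinear form $\langle\cdot,\cdot\rangle$, so that the Riesz step produces a genuinely linear (not antilinear) operator $A$ and the invariance calculation above is consistent with our antilinearity convention.
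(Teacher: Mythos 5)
Your proof is correct and is essentially the standard argument: the paper does not give its own proof of this lemma but cites Vallentin's lecture notes, and the route there is the same one you take, converting the invariant sesquilinear form via Riesz representation into a $\Gamma$-equivariant linear map $A\colon \Mcal \to \Mcal'$ and then applying Schur's lemma. Your bookkeeping of the two sesquilinear structures is handled correctly (conjugating before Riesz so that $A$ comes out linear, and using unitarity of $\pi$ in the ambient inner product for the equivariance step), and the reduction of part (2) to the endomorphism $T^{-1}A$ is exactly right.
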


\begin{theorem}
\label{thm:symmetry adapted system}
Let $X$ be a compact metric space with a continuous action of a compact group $\Gamma$. The space $X$ admits a symmetry adapted system.
\end{theorem}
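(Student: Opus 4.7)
The plan is to construct the required system explicitly by pairing orthonormal families drawn from the spaces
\[
\Ccal_\pi := \mathrm{Hom}_\Gamma(X,\Hcal_\pi) \qquad (\pi\in\hat\Gamma)
\]
of continuous $\Gamma$-equivariant maps against fixed orthonormal bases of the $\Hcal_\pi$. By Lemma~\ref{lem:existence of strictly positive measure}, fix a strictly positive $\Gamma$-invariant Borel probability measure $\mu$ on $X$, and equip $\Ccal_\pi$ with the pre-Hilbert inner product
\[
\langle\varphi,\psi\rangle_{\Ccal_\pi} = \int_X \langle\varphi(x),\psi(x)\rangle_{\Hcal_\pi}\,d\mu(x).
\]
Since $X$ is compact metric and $\Hcal_\pi$ is finite dimensional, $\Ccal(X;\Hcal_\pi)$ is separable in the uniform norm, and hence $\Ccal_\pi$ admits a countable subset dense in the uniform norm. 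Apply Gram--Schmidt with respect to $\langle\cdot,\cdot\rangle_{\Ccal_\pi}$ to obtain an $L^2$-orthonormal family $\{\varphi_{\pi,i}\}_{i\in I_\pi}$ (of cardinality $m_\pi:=|I_\pi|\in\{0,1,\ldots,\infty\}$) whose algebraic linear span equals that of the starting dense subset and is therefore itself uniformly dense in $\Ccal_\pi$.

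Fix once and for all an orthonormal basis $\{v_{\pi,j}\}_{j=1}^{d_\pi}$ of $\Hcal_\pi$ realizing $\pi$ in matrix form, and set
\[
e_{\pi,i,j}(x) := \sqrt{d_\pi}\,\langle v_{\pi,j},\varphi_{\pi,i}(x)\rangle_{\Hcal_\pi}.
\]
Using the adjoint identity and the equivariance $\varphi_{\pi,i}(\gamma x)=\pi(\gamma)\varphi_{\pi,i}(x)$, one computes $L(\gamma)e_{\pi,i,j}=\sum_k\pi(\gamma)_{k,j}\,e_{\pi,i,k}$, so that $H_{\pi,i}:=\mathrm{span}\{e_{\pi,i,j}\}_{j=1}^{d_\pi}$ is a $\Gamma$-invariant subspace of $\Ccal(X;\C)$ on which $L$ acts via the matrix realization of $\pi$, and the prescription $e_{\pi,i,j}\mapsto e_{\pi,i',j}$ extends to the $\Gamma$-equivariant unitary $T_{\pi,i,i'}\colon H_{\pi,i}\to H_{\pi,i'}$. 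Orthonormality in $L^2(X,\mu;\C)$ follows by inserting an average $\int_\Gamma d\gamma$ into the inner product (permitted by invariance of $\mu$), substituting $\varphi_{\pi,i}(\gamma x)=\pi(\gamma)\varphi_{\pi,i}(x)$, and invoking the Schur orthogonality $\int_\Gamma\overline{\pi(\gamma)_{j,\ell}}\pi(\gamma)_{j',\ell'}\,d\gamma = d_\pi^{-1}\delta_{j,j'}\delta_{\ell,\ell'}$: after the $\Gamma$-integration only the $j=j'$ diagonal survives, what remains is $d_\pi^{-1}\delta_{j,j'}\langle\varphi_{\pi,i'}(x),\varphi_{\pi,i}(x)\rangle_{\Hcal_\pi}$, and $X$-integration gives $d_\pi^{-1}\delta_{j,j'}\delta_{i,i'}$, which cancels the prefactor $d_\pi$. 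Orthogonality across different types $\pi\neq\pi'$ is immediate from Lemma~\ref{lem:invariant inner product}(1).

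The step requiring the most care is completeness: I must show that the algebraic span of the entire collection $\{e_{\pi,i,j}\}$ is uniformly dense in $\Ccal(X;\C)$. By Lemma~\ref{lem:peter-weyl core} together with complete reducibility of finite-dimensional representations of compact groups, it suffices to uniformly approximate every finite-dimensional $\Gamma$-irreducible subspace $W\subseteq\Ccal(X;\C)$ by such combinations. Given such $W$ of type $\pi$ with basis $\{w_k\}_{k=1}^{d_\pi}$ realizing $\pi$, the explicit choice
\[
\varphi_W(x) := \tfrac{1}{\sqrt{d_\pi}}\sum_k \overline{w_k(x)}\,v_{\pi,k}
\]
lies in $\Ccal_\pi$ (with $\Gamma$-equivariance verified by unitarity of $\pi$) and satisfies $\sqrt{d_\pi}\langle v_{\pi,j},\varphi_W(x)\rangle = w_j(x)$, so a hypothetical Gram--Schmidt basis containing $\varphi_W$ would recover $W$ exactly. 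Since the actual $\{\varphi_{\pi,i}\}$ has uniformly dense algebraic span in $\Ccal_\pi$, I can approximate $\varphi_W$ in the uniform norm on $X$ by a finite combination $\sum_{i\in F}c_i\varphi_{\pi,i}$; the corresponding combination $\sum_{i\in F}c_i\,e_{\pi,i,j}$ then approximates $w_j$ uniformly by the Cauchy--Schwarz bound on the pairing with $v_{\pi,j}$. The main obstacle, and the reason to insist on the uniform-density version of separability at the outset, is precisely this last point: Gram--Schmidt from a merely $L^2$-dense subset would give $L^2$-convergence of the $\varphi_{\pi,i}$-expansion of $\varphi_W$, which is insufficient to control the pairings pointwise and would therefore not yield the uniform approximation of $W$ demanded by the definition of a symmetry adapted system.
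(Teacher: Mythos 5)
Your proof is correct (up to a couple of harmless notational slips with complex conjugates, e.g.\ the coefficients paired with $e_{\pi,i,j}$ are $\overline{c_i}$ rather than $c_i$ since $\varphi\mapsto\langle v_{\pi,j},\varphi(\cdot)\rangle$ is antilinear), and it takes a genuinely different route than the paper. The paper works entirely on the $\Ccal(X;\C)$ side: it invokes Zorn's lemma to produce a maximal linearly independent family of finite-dimensional $\Gamma$-irreducible subspaces, uses Lemma~\ref{lem:peter-weyl core} to argue that maximality forces uniform density of the sum, then fixes compatible bases and applies Gram--Schmidt \emph{within} $\Ccal(X;\C)$, relying on Lemma~\ref{lem:invariant inner product}(2) to show the Gram--Schmidt coefficients are independent of the index $j$ so the equivariance structure survives the orthonormalization. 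You instead move the construction to the spaces $\Ccal_\pi=\mathrm{Hom}_\Gamma(X,\Hcal_\pi)$: separability of $\Ccal(X;\Hcal_\pi)$ replaces Zorn, Gram--Schmidt happens in $\Ccal_\pi$ where the equivariance structure is automatic (so part~(2) of Lemma~\ref{lem:invariant inner product} is unnecessary and only part~(1) is used, and even that is replaced by a direct Schur-orthogonality computation), and completeness is checked by the explicit device $\varphi_W$, which cleanly connects each irreducible $W\subseteq\Ccal(X;\C)$ to an element of $\Ccal_\pi$. Your insistence that the $\{\varphi_{\pi,i}\}$ have \emph{uniformly} dense span, so that the final approximation of $W$ is in the sup norm rather than merely in $L^2$, is exactly the point one must get right, and you flagged it explicitly. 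Conversely, the paper's Zorn argument is agnostic to separability and makes the index set $[m_\pi]$ appear automatically; your route requires you to introduce the $L^2$ structure on $\Ccal_\pi$ and to verify non-degeneracy of that form (which holds because $\mu$ is strictly positive), but in exchange it gives a more concrete handle on $m_\pi$ as $\dim\mathrm{Hom}_\Gamma(X,\Hcal_\pi)$ and on the zonal matrices as Gram matrices of equivariant maps, aligning nicely with the perspective used elsewhere in Section~\ref{subsecSymmetry adapted systems and zonal matrices} and Theorem~\ref{kernels:theorem:extreme rays}.
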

\begin{proof}
Let $C$ be the set of all linearly independent sets of nontrivial, finite dimensional, $\Gamma$-invariant subspaces of $\Ccal(X; \C)$. Here, a set $S$ of subspaces is said to be \emph{linearly independent} if for any $n \in \N$ and distinct $A,B_1,\ldots,B_n \in S$, the intersection of $A$ with the sum $B_1 + \cdots + B_n$ is the zero space. This is equivalent to requiring that the union of any set of bases of the  subspaces in $S$ is linearly independent. If $X$ is nonempty, then $\Ccal(X; \C)$ is nonempty, so by Lemma~\ref{lem:peter-weyl core} the set $C$ is nonempty. 
	
Define a partial order on $C$ by set inclusion. Given a chain $T$ in $C$, the union of the sets in $T$ is also in $C$: Given $n \in \N$ and distinct $A, B_1, \ldots, B_n \in \bigcup T$, there must be some set in $T$ containing the sets $A, B_1,\ldots,B_n$, hence these sets are nontrivial, finite dimensional, $\Gamma$-invariant, and $A \cap (B_1 + \cdots + B_n) = \{0\}$, which means that $\bigcup T \in C$. Therefore, any chain in $C$ has an upper bound, and by Zorn's lemma $C$ contains a maximal element $M$.
	
Let $P$ be the sum of all sets in $M$ and let $\overline P$ be the closure of $P$ in the uniform topology of $\Ccal(X; \C)$. If $\overline P$ is not equal to $\Ccal(X; \C)$, then by Lemma~\ref{lem:peter-weyl core} there exists a finite dimensional, $\Gamma$-invariant subspace $V$ of $\Ccal(X; \C)$ containing a vector $u$ that does not lie in $P$. The cyclic subspace $W = \mathrm{span}\{L(\gamma) u : \gamma \in \Gamma\}$ has trivial intersection with $P$ because $L(\gamma) u \not \in L(\gamma) P = P$ for all $\gamma \in \Gamma$. This means that $M \cup \{W\}$ is a linearly independent set of subspaces. The space $W$ is $\Gamma$-invariant, and moreover, $W$ is finite dimensional since it is a subspace of the finite dimensional space $V$.  So $M \cup \{W\}$ is contained in $C$. This contradicts maximality of $M$, so $\overline P$ must be equal to $\Ccal(X; \C)$.

Since the representation in $M$ are finite dimensional, by Maschke's theorem for compact groups they decompose into irreducible subrepresentations of $\Ccal(X; \C)$, and we may assume $M$ to be a linearly independent set of $\Gamma$-irreducible subspaces of $\Ccal(X; \C)$ whose sum is uniformly dense.
	
Denote by $m_\pi \in \{0,1,\ldots,\infty\}$ the number of representations in $M$ that are equivalent to $\pi$. Select appropriate orthonormal bases $f_{\pi,i,1},\ldots,f_{\pi,i,d_\pi}$ of the representations in $M$, so that the span of
\[
\big\{f_{\pi,i,j} : \pi \in \hat \Gamma,\, i \in [m_\pi],\, j \in [d_\pi]\big\}
\]
is uniformly dense in $\Ccal(X; \C)$, and so that there are $\Gamma$-equivariant unitary operators $T_{\pi,i,i'} \colon \Hcal_{\pi, i} \to \Hcal_{\pi,i'}$ with $\smash{f_{\pi, i', j} = T_{\pi,i,i'} f_{\pi,i,j}}$ for all $\pi$, $i$, $i'$, and $j$. Now give this system any ordering where $f_{\pi,i,j}$ occurs before $f_{\pi,i',j'}$ whenever $i < i'$, and apply the Gram--Schmidt process to obtain a complete orthonormal system $\{e_{\pi,i,j}\}$. By Lemma~\ref{lem:invariant inner product} we have
\[
e_{\pi,i,j} = f_{\pi,i,j} - \sum_{i'=1}^{i-1} \langle e_{\pi,i,j}, e_{\pi,i',j} \rangle e_{\pi,i',j}  = f_{\pi,i,j} - \sum_{i'=1}^{i-1} c_{\pi,i,i'} e_{\pi,i',j},
\]
where $c_{\pi,i,i'} = \langle e_{\pi,i,j}, e_{\pi,i',j} \rangle$ does not depend on $j$. It follows that the system $\{e_{\pi,i,j}\}$ is symmetry adapted, which completes the proof.
\end{proof}

We now use the previous theorem to prove that the union of the sequence of inner approximations constructed in Section~\ref{subsecSymmetry adapted systems and zonal matrices} is uniformly dense. For this we need one more lemma, for which a proof can be found in \cite{Laat2016}, which is a generalization of a proof from \cite{DeCorte2015}.
\begin{lemma}
\label{lemma:positve definite}
A $\Gamma$-invariant kernel $K \in \Ccal(X \times X)$ is positive definite if and only if $\hat K(\pi)$ is positive semidefinite for every $\pi \in \hat\Gamma$.
\end{lemma}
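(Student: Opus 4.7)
The plan is to prove both directions by transferring between the pointwise positive-definiteness condition and an integrated version, using continuity of $K$ and compactness of $X$.

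For the forward direction, assume $K$ is positive definite. The first step is to upgrade this to the integrated inequality
\[
I(g) := \iint_X K(x,y)\, g(x) \overline{g(y)} \, d\mu(x)\, d\mu(y) \geq 0 \qquad \text{for every } g \in \Ccal(X; \C),
\]
which follows from a Riemann-sum approximation: partition $X$ into small measurable pieces $B_k$ with points $x_k \in B_k$, so $I(g)$ is approximated by $\sum_{k,l} g(x_k)\overline{g(x_l)} K(x_k,x_l)\mu(B_k)\mu(B_l)$, a nonnegative quantity by positive definiteness, and pass to the limit using continuity of $K$ and $g$ together with compactness. Then, for any $v \in \C^{m_\pi}$ with finitely many nonzero entries, expand $v^* \hat K(\pi) v$ using the definition of $\hat K(\pi)$ together with $Z_\pi(x,y) = E_\pi(x)E_\pi(y)^*$; this rewrites as $\frac{1}{d_\pi}\sum_{j=1}^{d_\pi} I(g_j)$ with $g_j(x) = \sum_i \bar v_i\, e_{\pi,i,j}(x)$, hence is nonnegative.

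For the reverse direction, assume $\hat K(\pi) \succeq 0$ for every $\pi \in \hat\Gamma$, and decompose $\hat K(\pi) = \sum_k a_k^{(\pi)} (a_k^{(\pi)})^*$. For any finite $S \subseteq \hat\Gamma$, the partial Fourier sum
\[
K_S(x,y) = \sum_{\pi \in S} \sum_{i,i'=1}^{m_\pi} \hat K(\pi)_{i,i'}\, Z_\pi(x,y)_{i,i'}
\]
rewrites, using the expansion of $Z_\pi$ and the decomposition of $\hat K(\pi)$, as a finite sum $\sum_{\pi,k,j} f_{\pi,k,j}(x) \overline{f_{\pi,k,j}(y)}$ with $f_{\pi,k,j}(x) = \sum_i a_{k,i}^{(\pi)} e_{\pi,i,j}(x)$. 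Hence each $K_S$ is a continuous positive definite kernel. By the inverse Fourier transform, $K_S \to K$ in $L^2(X \times X,\, \mu\otimes\mu)$ along any exhaustion of $\hat\Gamma$ by finite sets. Since $K$ is bounded, the bilinear form $\phi \mapsto \iint K(x,y)\, \phi(x) \overline{\phi(y)} \,d\mu(x)\, d\mu(y)$ is continuous on $L^2(X;\C)$, and the Riemann-sum argument above applied to the continuous positive definite $K_S$ gives $\iint K_S\,\phi(x)\overline{\phi(y)}\, d\mu(x)\,d\mu(y) \geq 0$; passing to the $L^2$ limit yields the same for $K$ and every $\phi \in L^2(X;\C)$. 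Finally, given $x_1,\ldots,x_n \in X$ and $c_1,\ldots,c_n \in \C$, choose continuous bump functions $\phi_\varepsilon$ approximating $\sum_i c_i \delta_{x_i}$ (concentrated in small neighborhoods of the $x_i$, which is possible because $\mu$ is strictly positive); by continuity of $K$, $\iint K(x,y) \phi_\varepsilon(x) \overline{\phi_\varepsilon(y)}\, d\mu(x)\, d\mu(y) \to \sum_{i,j} c_i \bar c_j K(x_i,x_j)$ as $\varepsilon \downarrow 0$, so the right-hand side is nonnegative.

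The main obstacle is exactly this last bridge: the inverse Fourier series converges only in $L^2$, so pointwise positive definiteness cannot be transferred from the $K_S$ to $K$ by direct substitution of finite point samples. The weak formulation (integrating against $\phi \otimes \bar\phi$) combined with continuity of $K$ to pass back to point evaluations via narrowing bumps is what makes the argument work, and is the reason compactness of $X$ and the strict positivity of $\mu$ enter essentially.
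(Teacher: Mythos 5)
Your proof is correct, but note that the paper does not actually prove this lemma: it cites the author's thesis \cite{Laat2016}, which in turn generalizes an argument from \cite{DeCorte2015}, so there is no in-paper proof to compare against. Judged on its own merits, your argument is sound, and you correctly isolate the central subtlety: the inverse Fourier series converges only in $L^2$, so positive definiteness of the partial sums $K_S$ cannot be transferred to $K$ by direct evaluation at finite point sets. Routing through the integrated quadratic form $\phi \mapsto \iint K(x,y)\,\phi(x)\,\overline{\phi(y)}\,d\mu(x)\,d\mu(y)$, using that $\phi\otimes\overline{\phi}\in L^2(X\times X)$ so that $K_S\to K$ in $L^2$ carries the inequality over, and then recovering point evaluations via narrowing bumps (valid because $\mu$ is strictly positive, so that the normalizing masses $\mu(B_\varepsilon(x_i))$ are nonzero, and because $K$ is continuous, so the normalized averages converge) is exactly the right bridge. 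One small simplification: in the reverse direction you do not need the Riemann-sum step for $K_S$, since the Gram decomposition $K_S=\sum_m f_m\otimes\overline{f_m}$ already gives
\[
\iint K_S(x,y)\,\phi(x)\,\overline{\phi(y)}\,d\mu(x)\,d\mu(y) = \sum_m \Big|\int f_m(x)\,\phi(x)\,d\mu(x)\Big|^2 \geq 0
\]
directly for every $\phi\in L^2(X;\C)$, with no approximation required.
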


Now we can show the sequence of inner approximations converges. A similar results is shown in \cite{Bachoc2010}, but there it is required that $\Gamma$  is contained in a bigger group that has a transitive action. Using the existence of a symmetry adapted system as proved above we can avoid this requirement. 

\begin{theorem}
\label{thm:energy:uniformly dense union}
The cone $\bigcup_{d=0}^\infty C_d$ is uniformly dense in $\Ccal(X \times X; \C)_{\succeq 0}^\Gamma$.
\end{theorem}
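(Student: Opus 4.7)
The plan is to combine Mercer's theorem with the Peter--Weyl density given by Lemma~\ref{lem:peter-weyl core} and an averaging argument, reducing the problem to showing that each continuous $\Gamma$-equivariant map $\varphi \colon X \to \Hcal_\pi$ can be uniformly approximated by a finite linear combination of the symmetry adapted maps $\psi_{\pi, i}(x) = (e_{\pi, i, 1}(x), \ldots, e_{\pi, i, d_\pi}(x))$.

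First I would fix $K \in \Ccal(X \times X; \C)_{\succeq 0}^\Gamma$ and $\varepsilon > 0$. Applied to the strictly positive $\Gamma$-invariant measure $\mu$ of Lemma~\ref{lem:existence of strictly positive measure}, Mercer's theorem for continuous positive definite kernels on a compact metric space yields an absolutely and uniformly convergent expansion $K(x,y) = \sum_n \lambda_n f_n(x)\overline{f_n(y)}$ with $\lambda_n > 0$ and continuous eigenfunctions $f_n$ of the Hilbert--Schmidt integral operator $T_K$. Since $K$ is $\Gamma$-invariant, $T_K$ commutes with $L(\gamma)$ for every $\gamma$, so each of its finite dimensional positive eigenspaces is $\Gamma$-invariant. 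Using Maschke's theorem I decompose each such eigenspace into $\Gamma$-irreducible subspaces and choose the Mercer basis compatibly with this decomposition. Because the Mercer series converges unconditionally, I may regroup to obtain
\[
K = \sum_W \lambda_W K_W, \qquad K_W(x,y) = \langle \varphi_W(x), \varphi_W(y)\rangle,
\]
still with absolute-uniform convergence, where $W$ ranges over the irreducible subspaces that appear and $\varphi_W \in \mathrm{Hom}_\Gamma(X, \Hcal_{\pi_W})$ is continuous (assembled from a chosen basis of $W$).

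Next I would truncate to a finite set $\mathcal{W}$ of irreducible subspaces with $\|K - \sum_{W \in \mathcal{W}} \lambda_W K_W\|_\infty < \varepsilon/2$. It then suffices to approximate each of the finitely many kernels $K_W$ uniformly by an element of $\bigcup_d C_{\pi_W, d}$. For this I would approximate $\varphi_W$ itself uniformly. Writing $\varphi_W = (\varphi_{W,1}, \ldots, \varphi_{W, d_{\pi_W}})$ in a fixed orthonormal basis of $\Hcal_{\pi_W}$, each coordinate is a continuous scalar function, and by Lemma~\ref{lem:peter-weyl core} may be uniformly approximated by some $g_k$ in the algebraic span of $\{e_{\pi, i, j}\}$. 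The tuple $g = (g_1, \ldots, g_{d_{\pi_W}})$ is not yet $\Gamma$-equivariant, so I average: $\bar g(x) = \int_\Gamma \pi_W(\gamma^{-1}) g(\gamma x)\,d\gamma$. Since $\varphi_W$ is fixed by the averaging and the integrand is a contraction, $\|\bar g - \varphi_W\|_\infty \leq \|g - \varphi_W\|_\infty$. A direct computation with the Schur orthogonality relations shows that the averaging kills all contributions from irreducibles $\pi' \not\simeq \pi_W$ and forces the surviving $\pi_W$-contributions into the form $\sum_{i \in R_W} c_i\,\psi_{\pi_W, i}$ for a finite index set $R_W$ and scalars $c_i$ determined by the coefficients of $g$.

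Consequently $\langle \bar g(x), \bar g(y)\rangle = \sum_{i, i' \in R_W} c_i \overline{c_{i'}}\,Z_{\pi_W}(x,y)_{i, i'}$ belongs to $C_{\pi_W, d}$ whenever $R_W \subseteq R_{\pi_W, d}$, and a standard two-term inner-product estimate turns the uniform bound on $\|\varphi_W - \bar g\|_\infty$ into a uniform bound on $|K_W(x,y) - \langle \bar g(x), \bar g(y)\rangle|$. Summing over the finitely many $W \in \mathcal{W}$ and taking $d$ large enough that $R_W \subseteq R_{\pi_W, d}$ for every such $W$ produces an element of $C_d$ at uniform distance at most $\varepsilon$ from $K$. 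The main technical obstacle will be the Schur-orthogonality bookkeeping in the averaging step, verifying that $\bar g$ lies in the finite dimensional span of those $\psi_{\pi_W, i}$ with $i$ appearing in $g$; this is clean for representations of real type (the case the paper ultimately applies) and requires only routine additional care for complex or quaternionic irreducibles.
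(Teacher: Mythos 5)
Your proposal is correct, but it takes a genuinely different route from the paper. The paper does not touch Mercer's theorem at all: it builds an approximate identity $\{I_n\}$ on $X$ (Lemma~\ref{lem:approximate identity}), approximates each $I_n$ by a symmetric $\Gamma$-invariant kernel $I_{n,d_n}$ lying in $S_{d_n}\otimes S_{d_n}$ (where $S_d$ is the span of the symmetry-adapted functions indexed by $R_{\pi,d}$), squares to get an approximate identity $\Pi_n$ on $X^2$, and then shows $T_{\Pi_n}K\to K$ uniformly with $T_{\Pi_n}K\in C_{d_n}$. The membership $T_{\Pi_n}K\in C_{d_n}$ is established via the Schur-type orthogonality of Lemma~\ref{lem:invariant inner product} together with Lemma~\ref{lemma:positve definite}, which is needed precisely because the Fourier coefficients of $T_{\Pi_n}K$ are not positive semidefinite by inspection. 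Your argument instead spectrally decomposes $K$ itself: Mercer gives an absolutely--uniformly convergent expansion, the eigenspaces of $T_K$ are $\Gamma$-invariant because $T_K$ commutes with the regular representation, Maschke lets you regroup the series into irreducible blocks $\lambda_W K_W$ with $K_W=\langle\varphi_W(\cdot),\varphi_W(\cdot)\rangle$, and after truncating to finitely many $W$ you approximate each $\varphi_W$ by averaging a near solution from the dense algebraic span of the symmetry-adapted system. This sidesteps both the approximate-identity machinery and Lemma~\ref{lemma:positve definite}, since your final approximant is a finite nonnegative combination of Gram kernels of equivariant maps and is therefore in $C_d$ by construction; you still rely on Schur orthogonality, but in the averaging step rather than in computing Fourier coefficients of a convolution.

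Two small remarks so the write-up is airtight. First, the density of the algebraic span of $\{e_{\pi,i,j}\}$ is what Theorem~\ref{thm:symmetry adapted system} provides (Lemma~\ref{lem:peter-weyl core} is only the ingredient used to prove it), so that is the reference you want when choosing $g_k$. Second, as you already flag, there is a conjugate-representation bookkeeping issue: with the paper's convention that $\Hcal_{\pi,i}$ carries $\pi$ under $L$, the coordinate map $x\mapsto(e_{\pi,i,j}(x))_j$ is equivariant for $\bar\pi$, and the same discrepancy appears for the $\varphi_W$ built from a Mercer eigenbasis. A consistent choice of conventions (or restriction to real-type $\Gamma$, which covers all applications in the paper) resolves it; it is worth stating explicitly which representation the averaging operator $\int_\Gamma \pi(\gamma^{-1})g(\gamma x)\,d\gamma$ uses and verifying it matches the equivariance of $\varphi_W$, since otherwise the projector annihilates $\varphi_W$ rather than fixing it.
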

\begin{proof}
	By Theorem~\ref{thm:symmetry adapted system} there exists a symmetry adapted system $\{e_{\pi,i,j}\}$ of $X$. Define the sets
	\[
	S_d = \mathrm{span} \Big\{e_{\pi,i,j} : \pi \in \hat\Gamma, \, i \in R_{\pi,d},\, j \in [d_\pi] \Big\},
	\]
	so that we have the inclusions
	\[
	S_0 \subseteq S_1 \subseteq \ldots \subseteq \Ccal(X; \C),
	\]
	and $\bigcup_{d=0}^\infty S_d$ is uniformly dense in $\Ccal(X; \C)$.

	Lemma~\ref{lem:approximate identity} shows there exists an approximate identity $\{I_n\}$ of $X$, where each $I_n$ is real-valued, symmetric, and $\Gamma$-invariant. By the above we have that for each $n$, there exists a sequence $\{I_{n,d}\}_d$ of real-valued kernels, with $I_{n,d} \in S_d \times S_d$ for each $d$, such that  $I_{n,d} \to I_n$ uniformly as $d \to \infty$.
	
	We may assume the kernels $I_{n,d}$ to be symmetric and $\Gamma$-invariant: We have that $\bar I_{n,d}$ converges to $I_n$ uniformly, where $\bar I_{n,d}$ is the kernel defined by integrating against the Haar measure of $\Gamma$:
	\[
	\bar I_{n,d}(x,y) = \frac{1}{2} \int \Big(I_{n,d}(\gamma x, \gamma y) + I_{n,d}(\gamma y, \gamma x)\Big) \, d \gamma.
	\]
	And since $S_d$ is $\Gamma$-invariant, we have $\bar I_{n,d} \in S_d \otimes S_d$ for all $n$ and $d$, so we can replace $I_{n,d}$ by $\bar I_{n,d}$.
	
	For each $n$, let $d_n$ be an integer such that $\|I_{n,d_n} - I_n \|_\infty \leq 1/n$. It follows that $\{I_{n,d_n}\}_n$ is an approximate identity of $X$ with $I_{n,d_n} \in S_{d_n}^2$ for all $n$.
	
	For each $n$ we define the kernel $\Pi_n \in \Ccal(X^2 \times X^2)$ by
	\[
	\Pi_n((x,y),(x',y')) = I_{n,d_n}(x,x') I_{n,d_n}(y,y') \quad \text{for} \quad x,x',y,y' \in X.
	\]
	In the remainder of the proof we show $\{\Pi_n\}$ is an approximate identity of $X^2$, and we show that the range of $T_{\Pi_n}$ is contained in $C_{d_n} \subseteq \bigcup_{d=0}^\infty C_d$.
	
	For $f,g \in \Ccal(X; \C)$, we have
	\begin{align*}
	\|T_{K_n} (f \otimes g) - f \otimes g\|_\infty 
	&= \|T_{I_{n,d}} f \otimes T_{I_{n,d}} g - f \otimes g\|_\infty\\
	&\leq \|T_{I_{n,d}} f \|_\infty \|T_{I_{n,d}}g-g\|_\infty + \|T_{J_n} f -f\|_\infty \|g\|_\infty \to 0.
	\end{align*}
	The span of kernels of the form $f \otimes g$ is uniformly dense in $\Ccal(X \times X; \C)$, hence, given a kernel $K \in \Ccal(X \times X; \C)_{\succeq 0}^\Gamma$, the sequence $T_{\Pi_n} K$ converges uniformly to $K$. This shows $\{\Pi_n\}$ is an approximate identity.
	
	The kernel $T_{\Pi_n} K$ lies in $S_{d_n} \otimes S_{d_n}$ because $I_{n,d_n}$ lies in $S_{d_n} \otimes S_{d_n}$. Thus,
	\[
	T_{\Pi_n} K = \sum_{\pi,\pi' \in \smash{\hat \Gamma}} \; \sum_{i \in R_{\pi,d_n}} \sum_{i' \in R_{\pi',d_n}} \; \sum_{j = 1}^{d_\pi} \sum_{j' = 1}^{d_{\pi'}} \Big\langle e_{\pi,i,j}, e_{\pi',i',j'} \Big\rangle_{T_{\Pi_n}K} e_{\pi,i,j} \otimes \overline{e_{\pi',i',j'}},
	\]
	where 
	\[
	\Big\langle e_{\pi,i,j}, e_{\pi',i',j'} \Big\rangle_{T_{\Pi_d} K} = \iint T_{\Pi_d}K(x,y) e_{\pi,i,j}(x) \overline{e_{\pi',i',j'}(y)} \, d\mu(x) d\mu(y)
	\]
	is a sesquilinear form whose $\Gamma$-invariance follows from $T_{\Pi_d}K$ being $\Gamma$-invariant. Lemma~\ref{lem:invariant inner product} shows $\langle e_{\pi,i,j}, e_{\pi',i',j'}\rangle_K = 0$ when $\pi \neq \pi'$ or $j \neq j'$, and $\langle e_{\pi,i,j}, e_{\pi,i',j}\rangle_K$ does not depend on $j$. This shows 
	\[
	T_{\Pi_n} K(x,y) = \sum_{\pi \in \widehat\Gamma} \Big\langle \widehat{T_{\Pi_n} K}(\pi), Z_{\pi,d_n}(x,y) \Big\rangle.
	\]
	The kernel $T_{\Pi_n} K$ is positive definite, so by Lemma~\ref{lemma:positve definite}, the matrices $\widehat{T_{\Pi_n} K}(\pi)$ are positive semidefinite. So, the kernel
	\[
	(x,y) \mapsto \langle \widehat{T_{\Pi_n} K}(\pi), Z_{\pi,d_n}(x,y)\rangle
	\]
	lies in $C_{\pi,d_n}$, and $T_{\Pi_n} K \in \sum_{\pi \in \hat\Gamma} C_{\pi,d_n} = C_{d_n}$.
\end{proof}

\section*{Acknoledgements}

We would like to thank Henry Cohn, Dion Gijswijt, and Frank Vallentin for helpful discussions. We also thank the referee whose suggestions helped to improve the
paper.

\begin{bibdiv}
\begin{biblist}

\bib{Andreev1996}{article}{
   author={Andreev, Nikolay N.},
   title={An extremal property of the icosahedron},
   journal={East J. Approx.},
   volume={2},
   date={1996},
   number={4},
   pages={459--462},
   issn={1310-6236},
}

\bib{AnsemilFloret1998}{article}{
   author={Ansemil, Jos{\'e} M.},
   author={Floret, Klaus},
   title={The symmetric tensor product of a direct sum of locally convex
   spaces},
   journal={Studia Math.},
   volume={129},
   date={1998},
   number={3},
   pages={285--295},
   issn={0039-3223},
}

\bib{Bachoc2010}{article}{
   author = {Bachoc, C.},
   title= {Lecture notes: Semidefinite programming, harmonic analysis and coding theory},
   year ={2010},
   pages = {48},
   eprint={https://hal.inria.fr/file/index/docid/515969/filename/CIMPA.pdf}
}

\bib{BachocVallentin2008}{article}{
   author={Bachoc, Christine},
   author={Vallentin, Frank},
   title={New upper bounds for kissing numbers from semidefinite
   programming},
   journal={J. Amer. Math. Soc.},
   volume={21},
   date={2008},
   number={3},
   pages={909--924},
   issn={0894-0347},
   url = {http://arxiv.org/abs/math/0608426}
}

\bib{Barvinok2002}{book}{
   author={Barvinok, A.},
   title={A Course in Convexity},
   publisher={Amer.~Math.~Soc.},
   series={Grad.~Stud.~Math.},
   volume={54},
   year={2002}
}

\bib{Beveren2002}{article}{
title={Some notes on group theory},
author={Beveren, E. {van}},
year={2002},
eprint={http://cft.fis.uc.pt/eef/evbgroups.pdf}
}
\bib{Bezanson2014}{article}{
title={Julia: A fresh approach to numerical computing.},
author = {Bezanson, Jeff},
author = {Edelman, Alan},
author = {Karpinski, Stefan},
author = {Shah, Viral B.},
year = {2014}, 
eprint = {http://arxiv.org/abs/1411.1607}
}

\bib{Bochner1941}{article}{
   author={Bochner, S.},
   title={Hilbert distances and positive definite functions},
   journal={Ann. of Math. (2)},
   volume={42},
   date={1941},
   pages={647--656},
   issn={0003-486X},
}

\bib{Borchers1999}{article}{
   author={Borchers, Brian},
   title={CSDP, a C library for semidefinite programming},
   note={Interior point methods},
   journal={Optim. Methods Softw.},
   volume={11/12},
   date={1999},
   number={1-4},
   pages={613--623},
   issn={1055-6788},
   doi={10.1080/10556789908805765},
}

\bib{BosmaCannonPlayoust1997}{article}{
author={Bosma, W.},
author={Cannon, J.},
author={Playoust, C.},
title={The Magma algebra system. I. The user language}, 
journal={J. Symbolic Comput.}, 
volume={24},
year={1997}, 
pages={235--265}
}

\bib{CimpriKuhlmannScheiderer2009}{article}{
   author={Cimpri\v c, Jaka},
   author={Kuhlmann, Salma},
   author={Scheiderer, Claus},
   title={Sums of squares and moment problems in equivariant situations},
   journal={Trans. Amer. Math. Soc.},
   volume={361},
   date={2009},
   number={2},
   pages={735--765},
   issn={0002-9947},
   doi={10.1090/S0002-9947-08-04588-1},
}

\bib{CohnKumar2007}{article}{
   author={Cohn, Henry},
   author={Kumar, Abhinav},
   title={Universally optimal distribution of points on spheres},
   journal={J. Amer. Math. Soc.},
   volume={20},
   date={2007},
   number={1},
   pages={99--148},
   issn={0894-0347},
   doi={10.1090/S0894-0347-06-00546-7},
}

\bib{Cohn2016b}{article}{
	author={Cohn, H.},
	author={Kumar, A.},
	author={Miller, S.D.},
	author={Radchenko, D.},
	author={Viazovska, M.S.},
	title={The sphere packing problem in dimension $24$},
	pages={12},
	year={2016},
	eprint={http://arxiv.org/abs/1603.06518}
}

\bib{CohnWoo2011}{article}{
   author={Cohn, Henry},
   author={Woo, Jeechul},
   title={Three-point bounds for energy minimization},
   journal={J. Amer. Math. Soc.},
   volume={25},
   date={2012},
   number={4},
   pages={929--958},
   issn={0894-0347},
   doi={10.1090/S0894-0347-2012-00737-1},
}

\bib{ComonGolubLimMourrain2008}{article}{
   author={Comon, Pierre},
   author={Golub, Gene},
   author={Lim, Lek-Heng},
   author={Mourrain, Bernard},
   title={Symmetric tensors and symmetric tensor rank},
   journal={SIAM J. Matrix Anal. Appl.},
   volume={30},
   date={2008},
   number={3},
   pages={1254--1279},
   issn={0895-4798},
   doi={10.1137/060661569},
}

\bib{DeCorte2015}{thesis}{
  author = {DeCorte, P.E.B.},
  title = {The eigenvalue method for extremal problems on infinite vertex-transitive graphs},
  type={PhD Thesis},
  organization={Delft University of Technology},  
  year={2015}
}

\bib{DelsarteGoethalsSeidel1977}{article}{
   author={Delsarte, P.},
   author={Goethals, J. M.},
   author={Seidel, J. J.},
   title={Spherical codes and designs},
   journal={Geometriae Dedicata},
   volume={6},
   date={1977},
   number={3},
   pages={363--388},
   issn={0046-5755},
}

\bib{Dieudonne1966}{article}{
   author={Dieudonn{\'e}, Jean},
   title={Sur la s\'eparation des ensembles convexes},
   language={French},
   journal={Math. Ann.},
   volume={163},
   date={1966},
   pages={1--3},
   issn={0025-5831},
}

\bib{Foeppl1912}{article}{
   author={F{\"o}ppl, L.},
   title={Stabile Anordnungen von Elektronen im Atom},
   language={German},
   journal={J. Reine Angew. Math.},
   volume={141},
   date={1912},
   pages={251--302},
   issn={0075-4102},
   doi={10.1515/crll.1912.141.251},
}

\bib{Folland1995}{book}{
   author={Folland, Gerald B.},
   title={A course in abstract harmonic analysis},
   series={Studies in Advanced Mathematics},
   publisher={CRC Press, Boca Raton, FL},
   date={1995},
   pages={x+276},
   isbn={0-8493-8490-7}
}

\bib{Fousse2007}{article}{
 author = {Fousse, Laurent},
 author = {Hanrot, Guillaume},
 author = {Lef\`{e}vre, Vincent},
 author = {P{\'e}lissier, Patrick},
 author = {Zimmermann, Paul},
 title = {MPFR: A Multiple-precision Binary Floating-point Library with Correct Rounding},
 journal = {ACM Trans. Math. Softw.},
 volume = {33},
 number = {2},
 year = {2007},
 url = {http://doi.acm.org/10.1145/1236463.1236468},
 doi = {10.1145/1236463.1236468},
 publisher = {ACM},
} 

\bib{SDPA}{report}{
author = {Fujisawa, K.},
author = {Fukuda, M.},
author = {Kobayashi, K.},
author = {Kojima, M.},
author = {Nakata, K.},
author = {Nakata, M.},
author = {Yamashita, M.},
title = {SDPA (SemiDefinite Programming Algorithm) Users Manual - Version 7.0.5},
number = {B-448},
organization = {Department of Mathematical and Computing Sciences, Tokyo Institute of
Technology, Tokyo},
year = {2008},
url = {http://sdpa.sourceforge.net}
}

\bib{GAP}{misc}{
organization = {The GAP~Group},
title  = {GAP -- Groups, Algorithms, and Programming, Version 4.8.5},
year = {2016},
url = {http://www.gap-system.org}
}

\bib{GatermannParrilo2004}{article}{
   author={Gatermann, Karin},
   author={Parrilo, Pablo A.},
   title={Symmetry groups, semidefinite programs, and sums of squares},
   journal={J. Pure Appl. Algebra},
   volume={192},
   date={2004},
   number={1-3},
   pages={95--128},
   issn={0022-4049},
   doi={10.1016/j.jpaa.2003.12.011},
}

\bib{Gelfand1958}{book}{
   author={Gel{\cprime}fand, I. M.},
   author={Minlos, R. A.},
   author={{\v{S}}apiro, Z. Ja.},
   title={Representations of the rotation group and of the Lorentz group, and their applications (translation)},
   publisher={Gosudarstv. Izdat. Fiz.-Mat. Lit., Moscow},
   date={1958},
   pages={368},
}

\bib{Klee1955}{article}{
   author={Klee, V. L., Jr.},
   title={Separation properties of convex cones},
   journal={Proc. Amer. Math. Soc.},
   volume={6},
   date={1955},
   pages={313--318},
   issn={0002-9939}
}

\bib{KraftProcesi1996}{book}{
   author = {Kraft, H.},
   author = {Procesi, C.},
   title = {Classical invariant theory: a primer},
   year = {1996}
}

\bib{KuijlaarsSaffSun2007}{article}{
   author={Kuijlaars, A. B. J.},
   author={Saff, E. B.},
   author={Sun, X.},
   title={On separation of minimal Riesz energy points on spheres in
   Euclidean spaces},
   journal={J. Comput. Appl. Math.},
   volume={199},
   date={2007},
   number={1},
   pages={172--180},
   issn={0377-0427},
   doi={10.1016/j.cam.2005.04.074},
}

\bib{Laat2016}{thesis}{
  author = {de Laat, David},
  title = {Moment methods in extremal geometry},
  type={PhD Thesis},
  organization={Delft University of Technology},  
  year={2016}
}

\bib{LaatOliveiraVallentin2012}{article}{
   author={de Laat, David},
   author={de Oliveira Filho, Fernando M{\'a}rio},
   author={Vallentin, Frank},
   title={Upper bounds for packings of spheres of several radii},
   journal={Forum Math. Sigma},
   volume={2},
   date={2014},
   pages={e23, 42},
   issn={2050-5094},
   doi={10.1017/fms.2014.24},
   url={\url{http://arxiv.org/abs/1206.2608}}
}

\bib{LaatVallentin2014}{article}{
   author={de Laat, David},
   author={Vallentin, Frank},
   title={A semidefinite programming hierarchy for packing problems in
   discrete geometry},
   journal={Math. Program.},
   volume={151},
   date={2015},
   number={2, Ser. B},
   pages={529--553},
   issn={0025-5610},
   doi={10.1007/s10107-014-0843-4},
}

\bib{Lasserre2000}{article}{
   author={Lasserre, Jean B.},
   title={Global optimization with polynomials and the problem of moments},
   journal={SIAM J. Optim.},
   volume={11},
   date={2000/01},
   number={3},
   pages={796--817},
   issn={1052-6234},
   doi={10.1137/S1052623400366802},
}

\bib{Lasserre2002a}{article}{
   author={Lasserre, Jean B.},
   title={An explicit equivalent positive semidefinite program for nonlinear
   $0$-$1$ programs},
   journal={SIAM J. Optim.},
   volume={12},
   date={2002},
   number={3},
   pages={756--769},
   issn={1052-6234},
   doi={10.1137/S1052623400380079},
}

\bib{Levenshtein1992}{article}{
author={Levenshtein, V.I.},
title={Designs as maximum codes in polynomial metric spaces}, 
journal = {Acta Appl. Math.},
volume={29},
year={1992},
pages={1--82}	
}

\bib{Hassler1932}{article}{
   author={Whitney, Hassler},
   title={Congruent Graphs and the Connectivity of Graphs},
   journal={Amer. J. Math.},
   volume={54},
   date={1932},
   number={1},
   pages={150--168},
   issn={0002-9327},
   doi={10.2307/2371086},
}

\bib{MelnikKnopSmith1977}{article}{
author={Melnik, T. W.},
author={Knop, O.},
author={Smith, W. R.},
title={Extremal arrangements of points and unit charges on a sphere: equilibrium configurations revised}, 
journal={Can. J. Chem.},
volume={55},
year={1977}, 
pages={1745--1761}}

\bib{Mittelman}{article}{
author = {Mittelman, H. D.},
eprint={http://plato.asu.edu/ftp/sdpa_format.txt},
title={The SDP Problem},
accessdate = {09/2016}
}

\bib{Molien1897}{article}{
author={Molien, T.},
title={Uber die Invarianten der linearen Substitutionsgruppe},
journal={Sitzungsber. Kónig. Preuss. Akad. Wiss.},
year={1897}, 
pages={1152--1156}
}

\bib{Musin2007}{article}{
   author={Musin, Oleg R.},
   title={Multivariate positive definite functions on spheres},
   conference={
      title={Discrete geometry and algebraic combinatorics},
   },
   book={
      series={Contemp. Math.},
      volume={625},
      publisher={Amer. Math. Soc., Providence, RI},
   },
   date={2007},
   pages={177--190},
   doi={10.1090/conm/625/12498},
}

\bib{NieSchweighofer2007}{article}{
   author={Nie, Jiawang},
   author={Schweighofer, Markus},
   title={On the complexity of Putinar's Positivstellensatz},
   journal={J. Complexity},
   volume={23},
   date={2007},
   number={1},
   pages={135--150},
   issn={0885-064X},
   doi={10.1016/j.jco.2006.07.002}
}	

\bib{Parrilo2000}{thesis}{
author={Parrilo, P.A.},
title={Structured Semidefinite Programs and Semialgebraic Geometry Methods in Robustness and Optimization.}
year={2000},
organization={Caltech},
type={PhD Thesis}
}

\bib{PolyaSzego1998}{book}{
   author={P{\'o}lya, George},
   author={Szeg{\H{o}}, Gabor},
   title={Problems and theorems in analysis. II},
   series={Classics in Mathematics},
   note={Theory of functions, zeros, polynomials, determinants, number
   theory, geometry;
   Translated from the German by C. E. Billigheimer;
   Reprint of the 1976 English translation},
   publisher={Springer-Verlag, Berlin},
   date={1998},
   pages={xii+392},
   isbn={3-540-63686-2},
   doi={10.1007/978-3-642-61905-2_7},
}

\bib{PowersReznick2000}{article}{
   author={Powers, Victoria},
   author={Reznick, Bruce},
   title={Polynomials that are positive on an interval},
   journal={Trans. Amer. Math. Soc.},
   volume={352},
   date={2000},
   number={10},
   pages={4677--4692},
   issn={0002-9947},
   doi={10.1090/S0002-9947-00-02595-2},
}

\bib{Putinar1993a}{article}{
   author={Putinar, Mihai},
   title={Positive polynomials on compact semi-algebraic sets},
   journal={Indiana Univ. Math. J.},
   volume={42},
   date={1993},
   number={3},
   pages={969--984},
   issn={0022-2518},
   doi={10.1512/iumj.1993.42.42045},
}

\bib{RienerTheobaldAndrenLasserre2013}{article}{
   author={Riener, Cordian},
   author={Theobald, Thorsten},
   author={Andr\'en, Lina Jansson},
   author={Lasserre, Jean B.},
   title={Exploiting symmetries in SDP-relaxations for polynomial
   optimization},
   journal={Math. Oper. Res.},
   volume={38},
   date={2013},
   number={1},
   pages={122--141},
   issn={0364-765X},
   doi={10.1287/moor.1120.0558},
}

\bib{Schrijver2005}{article}{
author = {Schrijver, Alexander},
title={New code upper bounds from the Terwilliger algebra and semidefinite programming},
journal={IEEE Trans. Inf. Th.},
volume={51},
pages={2859--2866},
year={2005}
}

\bib{Schwartz2016}{article}{
	author={Schwartz, Richard Evan},
	title={The triangular bi-pyramid minimizes a range of power law potentials},
  eprint={arXiv:1512.04628},
  year={2015},
  pages={57},
  url={http://arxiv.org/abs/1512.04628}
}

\bib{Schwartz2016b}{article}{
title={The Phase Transition in Five Point Energy Minimization},
author={Schwartz, Richard Evan},
year = {2016}
pages = {185},
url = {https://arxiv.org/abs/1610.03303}
}

\bib{Serre1977}{book}{
   author={Serre, Jean-Pierre},
   title={Linear representations of finite groups},
   note={Translated from the second French edition by Leonard L. Scott;
   Graduate Texts in Mathematics, Vol. 42},
   publisher={Springer-Verlag, New York-Heidelberg},
   date={1977},
   pages={x+170},
   isbn={0-387-90190-6},
}

\bib{Tumanov2013}{article}{
	author = {Tumanov, A.},
	title={Minimal biquadratic energy of 5 particles on 2-sphere}, 
	journal={Indiana Univ. Math Journal},
	volume={62},
	year={2013},
	pages={1717--1731},
	url={http://arxiv.org/abs/1208.5044}
}

\bib{Viazovska2016a}{article}{
author={Viazovska, M.S.},
title={The sphere packing problem in dimension $8$},
pages={22},
year={2016},
eprint={http://arxiv.org/abs/1603.04246}
}

\bib{Vallentin2008}{article}{
   author={Vallentin, Frank},
   title={Lecture notes: Semidefinite programs and harmonic analysis},
   date={2008},
   pages={31},
   url = {http://arxiv.org/abs/0809.2017}
}

\bib{Yudin1992}{article}{
   author={Yudin, V. A.},
   title={Minimum potential energy of a point system of charges},
   language={Russian, with Russian summary},
   journal={Diskret. Mat.},
   volume={4},
   date={1992},
   number={2},
   pages={115--121},
   issn={0234-0860},
   translation={
      journal={Discrete Math. Appl.},
      volume={3},
      date={1993},
      number={1},
      pages={75--81},
      issn={0924-9265},
   },
   doi={10.1515/dma.1993.3.1.75},
}

\end{biblist}
\end{bibdiv}  

\end{document}